\definecolor{DAcolor}{rgb}{0.1,0.1,1}
\definecolor{DAHcolor}{rgb}{0.09, 0.45, 0.27}
\newcommand{\RR}[1]{{\color{red}#1}}
\newcommand{\RA}[1]{{\color{blue}#1}}
\renewcommand{\RR}[1]{{#1}}
\renewcommand{\RA}[1]{{#1}}
\let\llncs@addcontentsline\addcontentsline
\patchcmd{\maketitle}{\addcontentsline}{\llncs@addcontentsline}{}{}
\patchcmd{\maketitle}{\addcontentsline}{\llncs@addcontentsline}{}{}
\patchcmd{\maketitle}{\addcontentsline}{\llncs@addcontentsline}{}{}
\begin{document}
\title{On Reconfiguration Graphs of Independent Sets under Token Sliding}
\author{David~Avis\inst{1,2} \and
Duc~A.~Hoang\inst{1}%
}
\authorrunning{D.~Avis and D.A.~Hoang}
\institute{Graduate School of Informatics, Kyoto University, Japan\\
\and
School of Computer Science, McGill University, Canada\\
\email{avis@cs.mcgill.ca}\\
\email{hoang.duc.8r@kyoto-u.ac.jp}%
}
\maketitle              %
\begin{abstract}
An independent set of a graph $G$ is a vertex subset $I$ such that there is no edge joining any two vertices in $I$.
Imagine that a token is placed on each vertex of an independent set of $G$. 
The $\mathsf{TS}$- ($\mathsf{TS}_k$-) reconfiguration graph of $G$ takes all non-empty independent sets (of size $k$) as its nodes, where $k$ is some given positive integer. 
Two nodes are adjacent if one can be obtained from the other by sliding a token on some vertex to one of its unoccupied neighbors.
This paper focuses on the structure and realizability of these reconfiguration graphs.
More precisely, we study two main questions for a given graph $G$: 
(1)~Whether the $\mathsf{TS}_k$-reconfiguration graph of $G$ belongs to some graph class $\mathcal{G}$ (including complete graphs, paths, cycles, complete bipartite graphs, connected split graphs, maximal outerplanar graphs, and complete graphs minus one edge)
and
(2)~If $G$ satisfies some property $\mathcal{P}$ (including $s$-partitedness, planarity, Eulerianity, girth, and the clique's size), whether the corresponding $\mathsf{TS}$- ($\mathsf{TS}_k$-) reconfiguration graph of $G$ also satisfies $\mathcal{P}$, and vice versa.
Additionally, we give a decomposition result for splitting a $\mathsf{TS}_k$-reconfiguration graph into smaller pieces.

\keywords{Token sliding \and Reconfiguration graph \and Independent set \and Structure \and Realizability \and Geometric graph.}
\end{abstract}

\section{Introduction}
\label{sec:introduction}

\textit{Reconfiguration problems} arise when we want to study the relationship between \textit{solutions} of some given computational problem (called a \textit{source problem}) such as \textsc{Satisfiability}, \textsc{Independent Set}, \textsc{Dominating Set}, \textsc{Vertex-Coloring}, and so on.
The main goal of reconfiguration problems is to study the so-called \textit{reconfiguration graph}---a graph whose nodes are solutions and their adjacency can be defined via some given \textit{reconfiguration rule}.
A typical example is the well-known classic Rubik's cube puzzle, where each configuration of the Rubik's cube corresponds to a solution, and two configurations (solutions) are adjacent if one can be obtained from the other by rotating a face of the cube by either $90$, $180$, or $270$ degrees.
The question is whether there exists a path from an arbitrary node to the one where each face has only one color.
Reconfiguration graphs have been studied in the literature from three major viewpoints: \textit{structural properties} (connectivity, Hamiltonicity, planarity, and so on), \textit{realizability} (which graph can be realized as a certain type of reconfiguration graph), and \textit{algorithmic properties} (whether certain \RR{questions}, such as finding a (shortest) path between two given nodes, can be answered efficiently, and if so, design an algorithm to do it).
For an overview of this research area, readers are referred to the surveys~\cite{Heuvel13,MynhardtN19,Nishimura18}.

One of the classic reconfiguration rules is the so-called \textit{Token Sliding} ($\mathsf{TS}$).
Any vertex subset of a graph $G$ can be seen as a set of tokens placed on some vertices of $G$.
Under $\mathsf{TS}$, a token on some vertex $v$ can only be moved to one of $v$'s unoccupied adjacent vertices. 
Two vertex subsets are \textit{adjacent} under $\mathsf{TS}$ if one can be obtained from the other via a single $\mathsf{TS}$-move.
The \textit{token graph} $F_k(G)$~\cite{MonroyFHHUW12} is a reconfiguration graph whose nodes are size-$k$ vertex subsets of $G$ and edges are defined under $\mathsf{TS}$.
The study of $F_k(G)$ dates back to the 1990s, when Alavi, Behzad, Erd\H{o}s, and Lick~\cite{AlaviBEL91} considered several basic structural properties (e.g., regularity, bipartitedness, Eulerianity, etc.) of $F_2(G)$ (which they called the \textit{double vertex graph).
}
Other well-known reconfiguration rules are \textit{Token Jumping} ($\mathsf{TJ}$) and \textit{Token Addition/Removal} ($\mathsf{TAR}$), which respectively involve moving a token to any unoccupied vertex and adding/removing a single token to/from some unoccupied/occupied vertex.

In this paper, we take \textsc{Independent Set} as the source problem
and consider two types of reconfiguration graphs whose edges are defined under $\mathsf{TS}$: the $\mathsf{TS}_k$-reconfiguration graphs, for some given positive integer $k$, whose nodes are size-$k$ \textit{independent sets} of $G$ (i.e., vertex subsets whose members are pairwise non-adjacent, also called \textit{stable sets}) and the $\mathsf{TS}$-reconfiguration graphs whose nodes are independent
sets of arbitrary size.
We denote these graphs by $\mathsf{TS}_k(G)$ and $\mathsf{TS}(G)$, respectively.
(Similar definitions hold for $\mathsf{TJ}$.)
In particular, each $\mathsf{TS}_k(G)$ is an induced subgraph of $\mathsf{TS}(G)$ and also a subgraph of $F_k(G)$.

The focus of this paper is on purely graph theoretic
properties of $\mathsf{TS}(G)$ and $\mathsf{TS}_k(G)$ as opposed to
algorithmic properties, which have been well-studied in the literature.
In particular, the tractability/intractability of whether there is a path between two given nodes and several subsequent questions (e.g., if yes, whether a shortest one can be found efficiently; whether the statement holds for any pair of nodes; and so on) have been well-investigated for several graphs $G$.
Readers are referred to~\cite[Sections~4--5]{Nishimura18} for a quick summary of the recent results.
In particular, some structural properties regarding the connectivity and diameter of $\mathsf{TS}_k(G)$ can be derived from many of these algorithmic results~\cite{BonamyB17,BonsmaKW14,BrianskiFHM21,DemaineDFHIOOUY15,Fox-EpsteinHOU15,KaminskiMM11,KaminskiMM12}.
(See Appendix~\ref{apd:known-properties}.)
On the other hand, the realizability and structural properties of $\mathsf{TS}(G)$/$\mathsf{TS}_k(G)$ have not yet been systematically studied
and we initiate this study here.
More precisely, given a graph $G$, we provide some initial results regarding two main questions:
(1)~Whether the $\mathsf{TS}_k$-reconfiguration graph of $G$ belongs to some graph class $\mathcal{G}$
and
(2)~If $G$ satisfies some property $\mathcal{P}$, whether $\mathsf{TS}(G)$/$\mathsf{TS}_k(G)$ also satisfies $\mathcal{P}$ too, and vice versa.

The outline of this paper is as follows. In Section~\ref{sec:cg},
we give an example of
$\mathsf{TS}_k$-reconfiguration graphs used as a model for
a problem in computational geometry.
In Section~\ref{sec:preliminaries}, we define some terminology and notation that are used throughout this paper.
In Section~\ref{sec:is-G-a-TSk-graph}, for different graph classes $\mathcal{G}$ (including complete graphs, paths, cycles, complete bipartite graphs, connected split graphs, \RA{maximal outerplanar graphs, and complete graphs minus one edge}), we study whether there is a graph $G$ whose $\mathsf{TS}_k$-reconfiguration graph belongs to $\mathcal{G}$.
(See Table~\ref{table:is-G-a-TSk-graph}.)
In Section~\ref{sec:properties}, for different graph properties $\mathcal{P}$ (including $s$-partitedness, planarity, Eulerianity, girth, and the clique's size), we focus on answering the following question: if a graph $G$ satisfies $\mathcal{P}$, whether $\mathsf{TS}(G)$ ($\mathsf{TS}_k(G)$) does too, and vice versa. 
(See Table~\ref{table:TS-graph-properties}.)
In Section~\ref{sec:decompose-along-join}, we present a simple way of decomposing $\mathsf{TS}_k(G)$ into $\mathsf{TS}_k$-reconfiguration graphs of $G$'s subgraphs, provided that $G$ contains certain structure.
Finally, in Section~\ref{sec:concluding-remarks}, we summarize our results and propose some problems and directions for future study.

\section{A Geometrical Example}
\label{sec:cg}

\begin{figure}[!ht]
	\centering
	\begin{minipage}{.45\textwidth}
		\includegraphics[width=.5\linewidth,height=4.5cm,width=8cm]{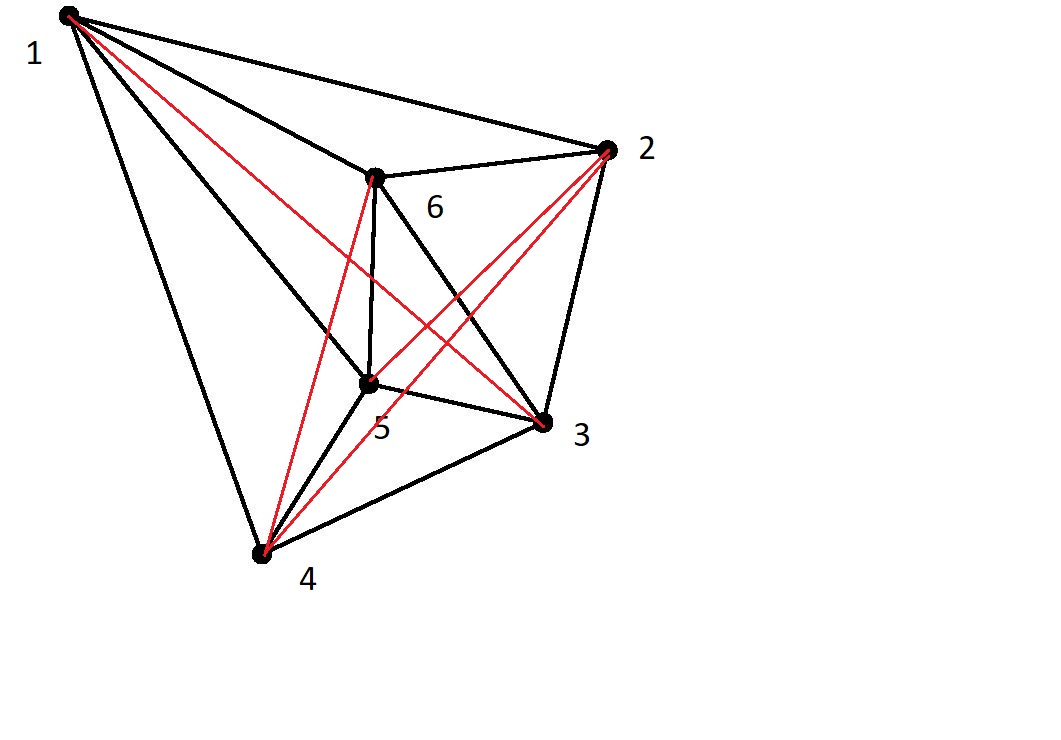}
		\caption{Point set $P$ with its line segments. A triangulation is shown by the black lines.}
		\label{fig:tr1}
	\end{minipage}\hspace*{0.5cm}%
	\begin{minipage}{.45\textwidth}
		\begin{tikzpicture}[every node/.style={circle, draw, thick, minimum size=0.1cm, fill=white}]
			\node (13) at (0,0) {$13$};
			\node (46) at (-1.5,0) {$46$};
			\node (25) at (1.5,0) {$25$};
			\node[fill=gray] (36) at (3,0) {$36$};
			\node[fill=gray] (15) at (-1.5,1.5) {$15$};
			\node[fill=gray] (56) at (0,-1.5) {$56$};
			\node (24) at (1.5,1.5) {$24$};
			\node[fill=gray] (35) at (1.5,3) {$35$};
			
			\draw[thick] (15) -- (46) -- (13) (13) -- (56) (35) -- (24) (13) -- (24) -- (36) -- (25) -- (13);
		\end{tikzpicture}
		\caption{Edge intersection graph $G$. Each number $ab$ inside a node represents an intersecting line segment of $P$. The stable set corresponds to the triangulation in \figurename~\ref{fig:tr1} is shaded grey.}
		\label{fig:tr2}
	\end{minipage}
\end{figure}
\begin{figure}[!ht]
	\centering
	\includegraphics[width=.5\linewidth,height=12cm,width=10cm]{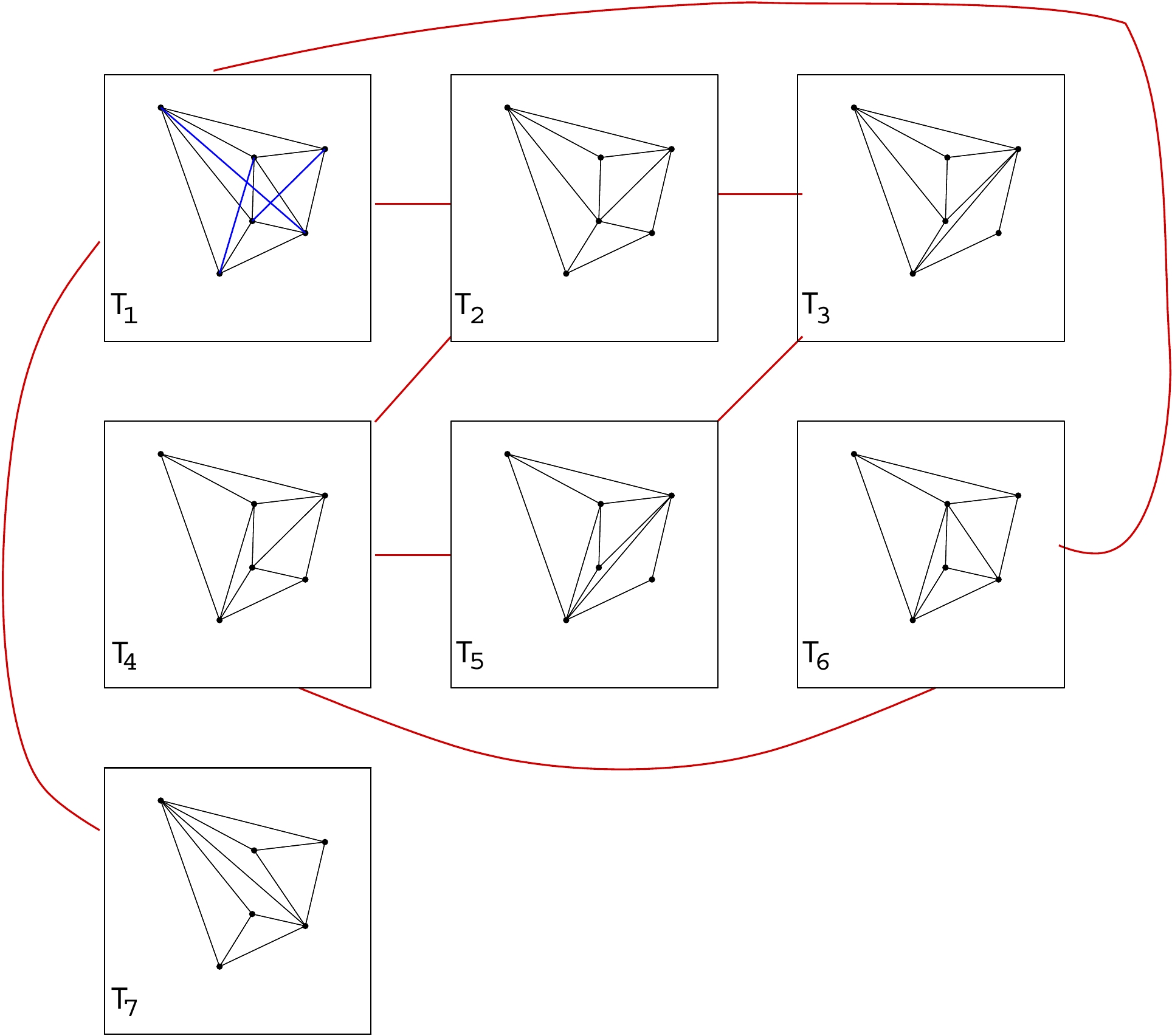}
	\caption{Triangulations of $P$. The red edges
		correspond to edge flips. In $T_1$ (the triangulation in \figurename~\ref{fig:tr1}) the three possible edge flips,
		corresponding to token slides 15-46, 56-13 and 36-25 in $G$ (\figurename~\ref{fig:tr2}), are shown in blue.}
	\label{fig:tr3}
\end{figure}

In this section we present an example of a $\mathsf{TS}$-reconfiguration graph
arising from a well studied problem in computational geometry: triangulations on a set
of planar points. 
For terminology and notation related to computational geometry which are not defined here, readers are referred to~\cite{PreparataS85}.

Given a set $P$ of $n$ points in the plane, no three collinear and no four co-circular.
Two line segments are \textit{intersecting} if they cross each other at an interior point of each segment, and \textit{non-intersecting} otherwise.
A {\it triangulation} of $P$ is any maximal set \RR{of non-intersecting segments}. 
It is well known that all triangulations have the same number of edges. 
The {\it edge intersection graph} $G$ of $P$ is the graph whose vertices $V$ are the line segments with endpoints in $P$ that intersect at least one other line segment. 
Let $L$ be the remaining set of line segments defined by $P$. Note that any edge on the convex hull of $P$ is in $L$ and there may be other such line segments. In \figurename~\ref{fig:tr1}
we give a set of $6$ points and show the $15$ segments they define. A triangulation is shown
by the solid black lines. 
The set $L$ consists of the seven segments $\{ 12, 14, 16, 23, 26, 34, 45 \} $.
We see that the edges $\{ 16, 26, 45 \}$ are internal to the convex hull.
Two vertices in $G$ are connected by an edge if their corresponding line segments
intersect.   
In \figurename~\ref{fig:tr2} we give the intersection graph $G$ for the example, which has $\alpha(G)=4$.
The triangulation appears in $G$ as the stable set
$\{ 15, 35, 36, 56 \}$.

Each vertex of $\mathsf{TS}_k(G)$ corresponds to a set of $k$ pairwise non-intersecting line segments.  
\figurename~\ref{fig:tr3} shows $\mathsf{TS}_4(G)$ for the example,
where the independent sets are shown as their corresponding
triangulations. 
The top left triangulation corresponds to the
stable set $\{15, 35, 36, 56 \}$. By token sliding we see that it is adjacent in $\mathsf{TS}_4(G)$ to stable sets $\{ 15, 35, 25, 56 \}$, 
$\{ 46, 35, 36, 56 \}$
and $\{ 15, 35, 36, 13 \} $ corresponding to the triangulations
$T_2$, $T_6$ and $T_7$. Geometrically we see that token sliding corresponds to {\it \RR{flipping}} the diagonal of a convex quadrilateral in
the triangulation.
Lawson \cite{Lawson77} showed that flipping well-chosen intersecting diagonals of
convex quadrilaterals will lead to the unique Delaunay triangulation, which proves that $\mathsf{TS}_{\alpha(G)}(G)$ is
connected. This result was generalized by Bern and Eppstein \cite{BerEpp92} to the case where a non-intersecting set of the line segments can be specified and the triangulations are 
{\it constrained} to include these segments. In this case we add these constrained segments and any segments that intersect them to $L$. 
They were able to show that the corresponding constrained Delaunay triangulation could be obtained after at most $O(n^2)$ diagonal flips,
giving a corresponding bound on the diameter of $\mathsf{TS}_{\alpha(G)}(G)$. Similar results on connectivity and
diameter for some other graph classes are given in the Appendix~\ref{apd:known-properties}.

\section{Preliminaries}
\label{sec:preliminaries}

For terminology and notation not defined here, readers are referred to~\cite{Diestel2017}.
Let $G$ be a simple, undirected graph. 
We use $V(G)$ and $E(G)$, \RR{respectively}, to denote its vertex-set and edge-set.
For two sets $I, J$, we sometimes use $I - J$ and $I + J$ to indicate $I \setminus J$ and $I \cup J$, respectively.
Additionally, if $J = \{u\}$, we sometimes write $I - u$ and $I + u$ instead of $I - \{u\}$ and $I + \{u\}$, respectively. 
The \textit{symmetric difference} of $I$ and $J$, denoted by $I \Delta J$, is simply the set $(I - J) + (J - I)$.
The \RR{\textit{neighborhood}} of a vertex $v$ in $G$, denoted by $N_G(v)$, is the set $\{w \in V(G): vw \in E(G)\}$.
The \RR{\textit{closed neighborhood}} of $v$ in $G$, denoted by $N_G[v]$, is simply the set $N_G(v) + v$.
Similarly, for a vertex subset $I \subseteq V(G)$, its \textit{\RR{neighborhood}} $N_G(I)$ and \textit{\RR{closed neighborhood}} $N_G[I]$ are respectively $\bigcup_{v \in I}N_G(v)$ and $N_G(I) + I$.
The \textit{degree} of a vertex $v$ in $G$, denoted by $\deg_G(v)$, is $\vert N_G(v) \vert$.
For a vertex subset $I$, we denote by $G[I]$ the subgraph of $G$ \textit{induced} by vertices in $I$. 
An \textit{independent set} (or \textit{stable set}) of $G$ is a vertex subset $I$ such that for any $u, v \in I$, we have $uv \notin E(G)$.
On the other hand, a \textit{clique} of $G$ is a vertex subset $K$ such that for any $u, v \in K$, we have $uv \in E(G)$.
\RR{The \textit{complement} $\overline{G}$ of $G$ is the graph whose vertices are $V(G)$ and two vertices are adjacent in $\overline{G}$ if they are not adjacent in $G$.}
We denote by $\alpha(G)$ and $\omega(G)$ the \textit{maximum} size of an independent set and a clique of $G$, respectively.
The \textit{girth} of $G$, denoted by $\text{girth}(G)$, is the smallest size of a cycle in $G$.
In case $G$ has no cycles, we define $\text{girth}(G) = \infty$, and say that it has \textit{infinite girth}, and otherwise $G$ has \textit{finite girth}. 
Two graphs $G_1$ and $G_2$ are \textit{isomorphic} if there exists a bijection $f: V(G_1) \to V(G_2)$ such that $uv \in E(G_1)$ if and only if $f(u)f(v) \in E(G_2)$.
For isomorphic graphs $G_1$ and $G_2$, we write $G_1 \simeq G_2$ or $G_1 \simeq_f G_2$ to emphasize that their isomorphism can be verified by the bijection $f$.
A graph is \textit{$H$-free} if it has no graph $H$ as an induced subgraph.

We respectively denote by $K_n$, $P_n$, and $C_n$ a \textit{complete graph}, a \textit{path}, and a \textit{cycle} on $n$ vertices.
The graph $K_3 \simeq C_3$ is also called a \textit{triangle}.
$K_n - e$ is the graph obtained from $K_n$ by removing exactly one edge.
We denote by $K_{m, n}$ a \textit{complete bipartite graph} whose partite sets are of sizes $m$ and $n$, for some positive integers $m \leq n$.
The graph $K_{1, n}$ is also called a \textit{star}.
$G$ is a \textit{split graph} if $V(G)$ can be partitioned into two sets $K$ and $S$, called a \textit{$KS$-partition} of $V(G)$, such that $K$ is a clique and $S$ is a stable set of $G$.
\RR{A \textit{$K$-max} $KS$-partition of $V(G)$ is a $KS$-partition where the size of $K$ is maximum, that is, $\vert K \vert = \omega(G)$ (e.g., see~\cite{CollinsT21}).
	It is well-known that any split graph $G$ has a unique \textit{$K$-max} $KS$-partition of $V(G)$.
	In this paper, we assume that for any split graph $G$, the $K$-max $KS$-partition of $V(G)$ is always given, and to emphasize this assumption, we write $G = (K \cup S, E)_{\text{$K$-max}}$. 
}

For a graph $G$ and a positive integer $k$, the \textit{$\mathsf{TS}_k$-reconfiguration graph} of $G$, denoted by $\mathsf{TS}_k(G)$, takes all size-$k$ independent sets of $G$ as its nodes.
Similarly, the \textit{$\mathsf{TS}$-reconfiguration graph} of $G$ takes all independent sets of $G$ as its nodes.
Two nodes (independent sets) $I$ and $J$ are \textit{adjacent} in either $\mathsf{TS}(G)$ or $\mathsf{TS}_k(G)$ if there exist $u, v \in V(G)$ such that $I - J = \{u\}$, $J - I = \{v\}$, and $uv \in E(G)$.
Naturally, we call a graph $F$ a \textit{$\mathsf{TS}_k$-reconfiguration graph} if there exists a graph $G$ such that $F \simeq \mathsf{TS}_k(G)$ or more precisely $F \simeq_f \mathsf{TS}_k(G)$ where $f$ is some bijection that can be used for verifying their isomorphism.
One can think of $f$ as a way to \textit{label vertices of $F$ by size-$k$ independent sets of $G$}.

\section{Graphs That Are (Not) $\mathsf{TS}_k$-Reconfiguration Graphs}
\label{sec:is-G-a-TSk-graph}

In this section, we study the realizability of $\mathsf{TS}_k$-reconfiguration graphs.
It is trivial that any graph $G$ is also a $\mathsf{TS}_1$-reconfiguration graph, since $G \simeq \mathsf{TS}_1(G)$.
Therefore, in this section, we always assume $k \geq 2$.

For a graph $G$, let $L_k(G)$ be the graph whose nodes are size-$k$ cliques of $G$ and two nodes are adjacent if they have exactly $k - 1$ vertices in common. 
In particular, $L_2(G)$ is also known as the \textit{line graph} of $G$.
The following lemma describes a relationship between $L_k(G)$ and the $\mathsf{TS}_k$-reconfiguration graph of its complement $\overline{G}$.
\figurename~\ref{fig:lgtf} illustrates this relationship for $k = 2$.

\begin{lemma}\label{lem:lgtf}
	Given a graph $G$.
	Then, $\mathsf{TS}_k(\overline{G})$ is a subgraph of $L_k(G)$.
	Moreover, $L_k(G) \simeq \mathsf{TS}_k(\overline{G})$ if and only if $G$ is $K_{k+1}$-free.
\end{lemma}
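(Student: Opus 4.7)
The plan is to unwind the definitions carefully, leveraging the natural identification of the vertex sets. A size-$k$ clique of $G$ is exactly a size-$k$ independent set of $\overline{G}$, so the node set of $L_k(G)$ coincides with that of $\mathsf{TS}_k(\overline{G})$. The identity map on this common vertex set will serve as my candidate isomorphism throughout, and both assertions of the lemma will be read off by comparing the two adjacency conditions.

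For the first assertion (subgraph relationship), I would take an arbitrary edge $\{I, J\}$ of $\mathsf{TS}_k(\overline{G})$. By definition there exist $u, v$ with $I - J = \{u\}$, $J - I = \{v\}$, and $uv \in E(\overline{G})$. This immediately yields $|I \cap J| = k - 1$, so $\{I, J\} \in E(L_k(G))$. Hence $\mathsf{TS}_k(\overline{G})$ is a spanning subgraph of $L_k(G)$.

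For the ``moreover'' equivalence, I would first reduce the isomorphism question to edge-set equality: since the two graphs share the same vertex set and one is a subgraph of the other, any isomorphism $\phi$ between them forces $|E(L_k(G))| = |E(\mathsf{TS}_k(\overline{G}))|$, and combined with inclusion this gives $E(L_k(G)) = E(\mathsf{TS}_k(\overline{G}))$. So it suffices to check when every $L_k(G)$-edge is also a $\mathsf{TS}_k(\overline{G})$-edge. For the ``if'' direction, assume $G$ is $K_{k+1}$-free and take any edge $\{K_1, K_2\}$ of $L_k(G)$, write $K_1 = C + u$ and $K_2 = C + v$ with $|C| = k-1$ and $u \neq v$; if we had $uv \in E(G)$, then $C \cup \{u, v\}$ would be a $(k+1)$-clique of $G$, contradiction, so $uv \in E(\overline{G})$ and hence $\{K_1, K_2\} \in E(\mathsf{TS}_k(\overline{G}))$. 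For the ``only if'' direction, assume $G$ contains a $K_{k+1}$ on some vertex set $W$; pick any $(k-1)$-subset $C \subseteq W$ and distinct $u, v \in W - C$, so $K_1 = C + u$ and $K_2 = C + v$ are size-$k$ cliques of $G$ adjacent in $L_k(G)$, yet $uv \in E(G)$ precludes their adjacency in $\mathsf{TS}_k(\overline{G})$. This exhibits an edge of $L_k(G)$ outside $\mathsf{TS}_k(\overline{G})$, so the two graphs cannot be isomorphic.

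I do not expect a real obstacle here — the proof is essentially definition chasing. The only point that deserves explicit care is the reduction of ``isomorphic'' to ``equal edge set,'' which hinges on the spanning subgraph structure already established in the first part; once that is in place, the equivalence falls out directly from whether $uv \in E(G)$ can or cannot occur for $C + u$, $C + v$ both $k$-cliques of $G$.
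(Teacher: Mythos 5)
Your proof is correct and follows essentially the same route as the paper: identify the size-$k$ cliques of $G$ with the size-$k$ independent sets of $\overline{G}$, note that a token-slide edge forces $|I \cap J| = k-1$ (giving the subgraph claim), and tie the failure of the converse inclusion of edges to the presence of a $K_{k+1}$, exactly as the paper does. The one point where you do slightly more is the ``only if'' direction: the paper argues that the canonical bijection $f$ fails to be an isomorphism when $G$ contains a $K_{k+1}$, whereas your reduction of isomorphism to edge-set equality (same vertex set, spanning subgraph, hence equal edge counts force identical edge sets for finite graphs) rules out \emph{any} isomorphism, which is what the statement literally asserts; this is a minor but genuine tightening, and the rest of your argument matches the paper's.
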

\begin{proof}
	By definition, for any $I = \{a_1, \dots, a_k\} \in V(\mathsf{TS}_k(\overline{G}))$, we have $a_ia_j \notin E(\overline{G})$ and therefore $a_ia_j \in E(G)$ for $1 \leq i < j \leq k$, which implies that $a_1\dots a_k \in V(L_k(G))$.
	As a result, the mapping $f: V(\mathsf{TS}_k(\overline{G})) \to V(L_k(G))$ defined by $f(\{a_1, \dots, a_k\}) = a_1\dots a_k$ is bijective.
	Moreover, if $I = \{a_1, \dots, a_k\}$ and $J = \{a_1^\prime, \dots, a_k^\prime\}$ are adjacent in $\mathsf{TS}_k(\overline{G})$, we must have $\vert I \cap J \vert = k - 1$, and therefore $a_1\dots a_k$ and $a_1^\prime\dots a_k^\prime$ are adjacent in $L_k(G)$.
	Therefore, $\mathsf{TS}_k(\overline{G})$ is a subgraph of $L_k(G)$.
	
	We now claim that $\mathsf{TS}_k(\overline{G}) \simeq_f L_k(G)$ if and only if $G$ is $K_{k+1}$-free.
	\begin{itemize}
		\item[$(\Rightarrow)$] Suppose to the contrary that $\mathsf{TS}_k(\overline{G}) \simeq_f L_k(G)$ and $G$ has a $K_{k+1} = a_1\dots a_ka_{k+1}$.
		Thus, $I = \{a_1, \dots, a_{k-1},\allowbreak a_k\}$ and $J = \{a_1, \dots, a_{k-1}, a_{k+1}\}$ are vertices of $\mathsf{TS}_k(\overline{G})$.
		Since $a_1\dots a_{k-1}a_k$ and $a_1\dots a_{k-1}a_{k+1}$ are adjacent in $L_k(G)$, we have $IJ \in E(\mathsf{TS}_k(\overline{G}))$.
		It follows that $a_ka_{k+1} \in E(\overline{G})$, which means $a_ka_{k+1} \notin E(G)$, which is a contradiction.
		Therefore, $G$ is $K_{k+1}$-free.
		\item[$(\Leftarrow)$] Suppose that $G$ is $K_{k+1}$-free.
		It suffices to show that $IJ \in E(\mathsf{TS}_k(\overline{G}))$ if and only if $f(I)f(J) \in E(L_k(G))$.
		Since $\mathsf{TS}_k(\overline{G})$ is a subgraph of $L_k(G)$, the only-if direction is clear.
		We now show the if direction.
		Without loss of generality, let $a_1\dots a_{k-1}a_k$ and $a_1\dots a_{k-1}a_{k+1}$ be two adjacent vertices in $L_k(G)$.
		Since $G$ is $K_{k+1}$-free, $a_ka_{k+1} \notin E(G)$, which means $a_ka_{k+1} \in E(\overline{G})$ and therefore $IJ \in E(\mathsf{TS}_k(\overline{G}))$, where $I = f^{-1}(a_1\dots a_{k-1}a_k)$ and $J = f^{-1}(a_1\dots a_{k-1}a_{k+1})$.
	\end{itemize}
	\qed\end{proof}
\begin{figure}[!ht]
	\centering
	\begin{tikzpicture}[every node/.style={circle, draw, thick, minimum size=0.3cm, fill=white}]
		\begin{scope}
			\node[label=above:{$v_1$}] (v1) at (0,0) {};
			\node[label=left:{$v_2$}] (v2) at (-1,-1) {};
			\node[label=left:{$v_3$}] (v3) at (-1,-2.5) {};
			\node[label=right:{$v_4$}] (v4) at (1,-2.5) {};
			\node[label=right:{$v_5$}] (v5) at (1,-1) {};
			
			\draw[thick] (v1) -- (v2) -- (v3) -- (v4) -- (v5) -- (v1) (v2) -- (v5);
			
			\node[draw=none, fill=none] (G) at (0, -3) {$G$};
		\end{scope}
		\begin{scope}[shift={(5,0)}]
			\node[label=above:{$v_1$}] (v1) at (0,0) {};
			\node[label=left:{$v_2$}] (v2) at (-1,-1) {};
			\node[label=left:{$v_3$}] (v3) at (-1,-2.5) {};
			\node[label=right:{$v_4$}] (v4) at (1,-2.5) {};
			\node[label=right:{$v_5$}] (v5) at (1,-1) {};
			
			\draw[thick] (v2) -- (v4) -- (v1) -- (v3) -- (v5);
			
			\node[draw=none, fill=none] (G) at (0, -3) {$\overline{G}$};
		\end{scope}
		\begin{scope}[shift={(0, -4)}]
			\node (12) at (-1,0) {$12$};
			\node (15) at (1,0) {$15$};
			\node (23) at (-1,-1.5) {$23$};
			\node (25) at (1,-1.5) {$25$};
			\node (34) at (-1,-3) {$34$};
			\node (45) at (1,-3) {$45$};
			
			\draw[ultra thick] (12) -- (23) -- (34) -- (45) edge[bend right=45] (15) (23) -- (25) -- (45); 
			\draw[thick] (12) -- (15) -- (25) -- (12);
			
			\node[draw=none, fill=none] (L2G) at (0, -4) {$L_2(G)$};
		\end{scope}
		\begin{scope}[shift={(5, -4)}]
			\node (12) at (-1,0) {$12$};
			\node (15) at (1,0) {$15$};
			\node (23) at (-1,-1.5) {$23$};
			\node (25) at (1,-1.5) {$25$};
			\node (34) at (-1,-3) {$34$};
			\node (45) at (1,-3) {$45$};
			
			\draw[thick] (12) -- (23) -- (34) -- (45) edge[bend right=45] (15) (23) -- (25) -- (45); 
			
			\node[draw=none, fill=none] (L2G) at (0, -4) {$\mathsf{TS}_2(\overline{G})$};
		\end{scope}
	\end{tikzpicture}
	\caption{A graph $G$, its complement $\overline{G}$, its line graph $L_2(G)$, and the graph $\mathsf{TS}_2(\overline{G})$. Each number of the form $ab$ inside a node represents a vertex subset $\{v_a, v_b\}$ which forms both a node in $L_2(G)$ and an independent set of $\overline{G}$.}
	\label{fig:lgtf}
\end{figure}
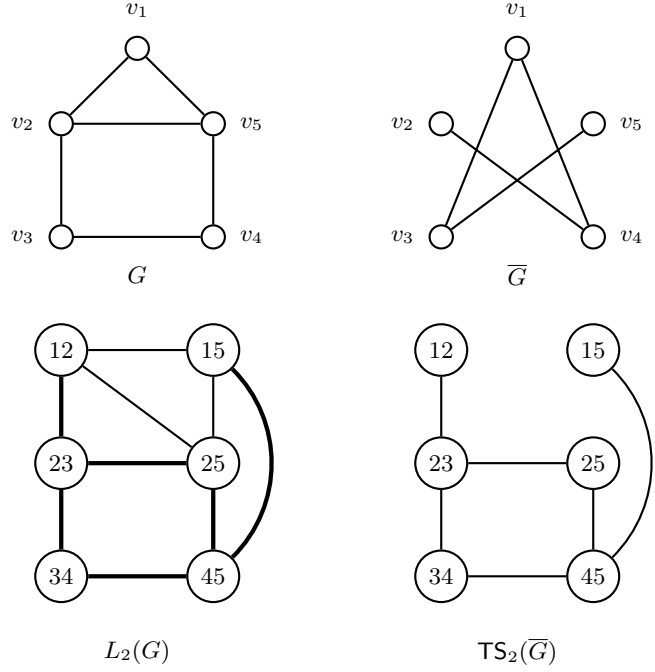
In \figurename~\ref{fig:lgtf} we see that $G$ contains a triangle and indeed 
$L_2(G)$ is not isomorphic to $ \mathsf{TS}_2(\overline{G})$. 
If we break the triangle by deleting the edge
$v_1v_2$ this corresponds to deleting the vertex $12$ in $L_2(G)$ and 
$\mathsf{TS}_2(\overline{G})$ along with adding the edge from $15$ to $25$. We now see that
$L_2(G) \simeq \mathsf{TS}_2(\overline{G})$.

In the rest of this section, for some graph class $\mathcal{G}$, we answer the following question: Does there exist a graph $G$ such that $\mathsf{TS}_k(G) \in \mathcal{G}$, \RR{for some $k \geq 2$}?
Our results are described in Table~\ref{table:is-G-a-TSk-graph}.

\begin{table}[!ht]
	\centering
	\caption{Does $G$ such that $\mathsf{TS}_k(G) \in \mathcal{G}$ exist ($k \geq 2$)?}
	\label{table:is-G-a-TSk-graph}
	\begin{tabular}{|c|c|c|}
		\hline
		$\mathcal{G}$ & Does $G$ exist? & Ref. \\
		\hline
		\multirow{3}{*}{$K_n$} & yes & \multirow{3}{*}{Cor.~\ref{cor:Kn-is-a-TSk-graph}}\\
		& $\vert V(G) \vert = n + k - 1$ & \\
		& $\vert E(G) \vert %
		= n(n-1)/2$ & \\
		\hline
		\multirow{3}{*}{$P_n$} & yes & \multirow{6}{*}{\RA{Cor.~\ref{cor:PnCn-are-TSk-graphs}}}\\
		& $\vert V(G) \vert = (n + 1) + k - 2 = n + k - 1$ & \\
		& $\vert E(G) \vert %
		= n(n-1)/2$ & \\
		\cline{1-2}
		\multirow{3}{*}{$C_n$} & yes & %
		\\
		& $\vert V(G) \vert = n + k - 2$ & \\
		& $\vert E(G) \vert %
		= n(n-3)/2$ & \\
		\hline
		\multirow{3}{*}{$K_{m, n}$ ($m \leq n$)} & yes, iff $m = 1$ and $n \leq k$ or $m = n = 2$ & \multirow{3}{*}{Prop.~\ref{prop:Kmn-isnt-a-TSk-graph}} \\
		& $\vert V(G) \vert = n + k$ or $\vert V(G) \vert = k + 2$ & \\
		& $\vert E(G) \vert = n(n+1)/2$ or $\vert E(G) \vert = 2$ & \\
		\hline
		\multirow{4}{*}{\begin{tabular}{@{}c@{}}connected\\ $F = (K \cup S, E)_{\text{$K$-max}}$\end{tabular} } & \begin{tabular}{@{}c@{}}yes, iff $\vert N_F(v) \cap S \vert \leq k-1$ and $\vert N_F(w) \vert = 1$\\ for every $v \in K$ and $w \in S$\end{tabular} & \multirow{4}{*}{Prop.~\ref{prop:conn-split-isnt-a-TSk-graph}}\\
		& $\vert V(G) \vert = \vert K \vert + \vert S \vert + k - 1$ & \\
		& \begin{tabular}{@{}c@{}}$\vert E(G) \vert = \binom{\vert K \vert}{2} + \binom{\vert S \vert}{2} + \sum_{v\in K}\vert N_F(v) \cap S \vert +$\\ $+ \sum_{\substack{v\in K\\ \vert N_F(v) \cap S \vert \neq 0}}(\vert K \vert - 1)$\end{tabular} & \\
		\hline
		\RA{maximal outerplanar} & \multirow{2}{*}{\RA{yes, iff $n \leq 3$}} & \multirow{2}{*}{\RA{Prop.~\ref{prop:general}}}\\
		\cline{1-1}
		\RA{$K_n - e$} & & \\
		\hline
	\end{tabular}
\end{table}

We now prove some useful observations.
\begin{proposition}\label{prop:induced}
	If $H$ is an induced subgraph of $G$, then $\mathsf{TS}_k(H)$ is an induced subgraph of $\mathsf{TS}_k(G)$.
	\RA{The reverse does not hold for any $k \geq 2$.}
\end{proposition}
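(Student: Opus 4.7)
The plan splits the statement into the forward implication and a separate counterexample for its converse.

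For the forward implication, I would start from the definition of induced subgraph: $V(H) \subseteq V(G)$ and $E(H) = E(G) \cap \binom{V(H)}{2}$. First I would check that every node of $\mathsf{TS}_k(H)$ is a node of $\mathsf{TS}_k(G)$: if $I \subseteq V(H)$ is a size-$k$ independent set of $H$, then $I \subseteq V(G)$ and no pair of its vertices is adjacent in $G$ (since any such edge would lie in $E(G) \cap \binom{V(H)}{2} = E(H)$), so $I$ is also a size-$k$ independent set of $G$. Next I would verify that $\mathsf{TS}_k(H)$ coincides with the subgraph of $\mathsf{TS}_k(G)$ induced by $V(\mathsf{TS}_k(H))$. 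For two size-$k$ independent sets $I, J$ of $H$, adjacency in either reconfiguration graph is governed by the existence of $u \in I - J$ and $v \in J - I$ with $uv$ an edge; since $u, v \in V(H)$, membership in $E(H)$ coincides with membership in $E(G)$, giving the claim.

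For the reverse direction, I would exhibit a uniform family of counterexamples, one per $k \geq 2$. Take $H = K_{1, k}$, the star with $k$ leaves, and $G = \overline{K_k}$, the edgeless graph on $k$ vertices. The unique size-$k$ independent set of $K_{1, k}$ is its set of $k$ leaves, and the unique size-$k$ independent set of $\overline{K_k}$ is its entire vertex set; thus both $\mathsf{TS}_k(H)$ and $\mathsf{TS}_k(G)$ are isomorphic to the one-vertex graph $K_1$, so $\mathsf{TS}_k(H)$ is trivially an induced subgraph of $\mathsf{TS}_k(G)$. On the other hand, $H$ has $k + 1$ vertices and at least one edge, while $G$ has only $k$ vertices and no edges, so $H$ cannot be an induced subgraph of $G$.

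The main obstacle is essentially none: the forward direction is a routine unwinding of the definitions, and the converse is refuted by a single example per $k$. The only mild care needed is to pick the counterexample so that it works uniformly across all $k \geq 2$, for which the pair $(K_{1, k}, \overline{K_k})$ is a clean choice, since both yield a singleton reconfiguration graph regardless of $k$.
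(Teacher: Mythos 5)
Your argument is correct. The forward direction is the same routine unwinding of the definitions as in the paper (nodes of $\mathsf{TS}_k(H)$ are nodes of $\mathsf{TS}_k(G)$, and adjacency between them is governed by an edge $uv$ with $u,v \in V(H)$, where membership in $E(H)$ and $E(G)$ coincide). The only difference is the counterexample to the converse: the paper takes $H = C_{2k}$ and $G = K_{1,k+1}$, whose $\mathsf{TS}_k$-reconfiguration graphs are edgeless graphs on $2$ and $k+1$ nodes respectively, while you take $H = K_{1,k}$ and $G = \overline{K_k}$, for which both reconfiguration graphs degenerate to a single node; both choices are valid (in either case ``induced subgraph'' in the converse is read up to isomorphism, as the paper itself does), and yours is arguably the more economical, since the verification is immediate, whereas the paper's example requires checking the structure of $\mathsf{TS}_k(C_{2k})$ and $\mathsf{TS}_k(K_{1,k+1})$.
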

\begin{proof}
	Suppose that there exist $I, J \in V(\mathsf{TS}_k(H))$ such that $IJ \in E(\mathsf{TS}_k(G))$.
	Since $IJ \in E(\mathsf{TS}_k(G))$, there exists $u, v \in V(G)$ such that $I - J = \{u\}$, $J - I = \{v\}$, and $uv \in E(G)$.
	Since $I, J \in V(\mathsf{TS}_k(H))$, it follows that $u, v \in V(H)$.
	As $H$ is an induced subgraph of $G$ and $uv \in E(G)$, we must have $uv \in E(H)$.
	Therefore, $IJ \in E(\mathsf{TS}_k(H))$.
	
	\RA{Now, if $H = C_{2k}$ and $G = K_{1,k+1}$, one can readily verify that $\mathsf{TS}_k(H)$ is an induced subgraph of $\mathsf{TS}_k(G)$ but $H$ is clearly not an induced subgraph of $G$.}
	\qed\end{proof}

\begin{proposition}\label{prop:TS-max-reconf-graph}
	Given a graph $H$, let $G = \mathsf{TS}_{\alpha(H)}(H)$.
	Then, for every $k \geq \alpha(H)$, $G$ is a $\mathsf{TS}_k$-reconfiguration graph.
	More precisely, there exists a graph $H^\prime$ having $\vert V(H) \vert + k - \alpha(H)$ vertices and $\vert E(H) \vert$ edges such that $G \simeq \mathsf{TS}_k(H^\prime)$.
\end{proposition}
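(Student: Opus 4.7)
The natural construction is to take $H' := H + I_t$, where $I_t$ denotes a set of $t := k - \alpha(H)$ new, pairwise non-adjacent, and isolated vertices disjoint from $V(H)$. Then $|V(H')| = |V(H)| + k - \alpha(H)$ and $|E(H')| = |E(H)|$, as required. The plan is to exhibit an explicit labeling bijection $f: V(G) \to V(\mathsf{TS}_k(H'))$ and to verify that it preserves adjacency in both directions.

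For the bijection, I would first observe that $\alpha(H') = \alpha(H) + t = k$: adding isolated vertices cannot decrease the maximum stable set size, and any stable set of $H'$ decomposes as $I \cup S$ with $I \subseteq V(H)$ stable in $H$ and $S \subseteq I_t$, giving size at most $\alpha(H) + t = k$. Consequently, every size-$k$ independent set of $H'$ must achieve equality in both coordinates, i.e., $|I| = \alpha(H)$ and $S = I_t$. This yields a well-defined bijection $f(I) := I \cup I_t$ from $V(G) = V(\mathsf{TS}_{\alpha(H)}(H))$ to $V(\mathsf{TS}_k(H'))$.

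Next, I would check that $f$ preserves adjacency. If $IJ \in E(G)$, then $I \Delta J = \{u,v\}$ with $uv \in E(H) \subseteq E(H')$; since $f(I) \Delta f(J) = I \Delta J$, we get $f(I)f(J) \in E(\mathsf{TS}_k(H'))$. Conversely, suppose $f(I)f(J) \in E(\mathsf{TS}_k(H'))$, so $f(I) \Delta f(J) = \{u,v\}$ with $uv \in E(H')$. The edge $uv$ cannot be incident to any vertex of $I_t$ (which are isolated in $H'$), so $u,v \in V(H)$ and $uv \in E(H)$; since $f(I)$ and $f(J)$ share the common block $I_t$, we get $I \Delta J = \{u,v\}$, hence $IJ \in E(\mathsf{TS}_{\alpha(H)}(H)) = E(G)$.

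There is no real obstacle here; the only subtle point is the observation that every size-$k$ stable set in $H'$ is forced to contain the entire pad $I_t$ and to restrict to a \emph{maximum} stable set of $H$, which is what makes the correspondence with $\mathsf{TS}_{\alpha(H)}(H)$ (rather than $\mathsf{TS}_{k'}(H)$ for some smaller $k'$) exact. When $k = \alpha(H)$, the construction degenerates to $H' = H$ and the statement is trivial.
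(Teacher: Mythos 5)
Your construction is exactly the paper's: add $k-\alpha(H)$ isolated vertices to $H$, observe that every size-$k$ stable set of $H'$ is forced to be a maximum stable set of $H$ together with the pad, and conclude the isomorphism. The paper states this more tersely, but your verification of the labeling bijection and adjacency preservation fills in the same argument correctly.
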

\begin{proof}
	Let $H^\prime$ be the graph obtained by adding a set $X$ of $(k - \alpha(H))$ new vertices to $H$.
	Any size-$k$ independent set in $H^\prime$ is a disjoint union of $X$ and a maximum independent set in $H$.
	Then, $G = \mathsf{TS}_{\alpha(H)}(H) \simeq \mathsf{TS}_k(H^\prime)$.
	\qed\end{proof}

A direct consequence of Proposition~\ref{prop:TS-max-reconf-graph} when $H = K_n$ is as follows. 
\begin{corollary}\label{cor:Kn-is-a-TSk-graph}
	$K_n$ is a $\mathsf{TS}_k$-reconfiguration graph, for any integers $k \geq 2$ and $n \geq 2$.
\end{corollary}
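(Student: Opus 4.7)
The plan is to apply Proposition~\ref{prop:TS-max-reconf-graph} with the specific choice $H = K_n$. Since any two vertices of $K_n$ are adjacent, we have $\alpha(K_n) = 1$, and the singleton independent sets of $K_n$ are in the obvious bijection with its vertices, with two singletons adjacent in $\mathsf{TS}_1(K_n)$ exactly when the two underlying vertices are adjacent in $K_n$. Hence $\mathsf{TS}_1(K_n) \simeq K_n$. Since $k \geq 2 > 1 = \alpha(K_n)$, Proposition~\ref{prop:TS-max-reconf-graph} applies and produces a graph $H'$ with $\mathsf{TS}_k(H') \simeq \mathsf{TS}_1(K_n) \simeq K_n$, which is what we want.

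To match the vertex and edge counts advertised in Table~\ref{table:is-G-a-TSk-graph}, I would briefly unpack the construction inside the proposition: $H'$ is obtained from $K_n$ by adding $k - \alpha(K_n) = k - 1$ new isolated vertices (call this set $X$). This gives $|V(H')| = n + k - 1$ and $|E(H')| = |E(K_n)| = n(n-1)/2$, exactly as tabulated. Every size-$k$ independent set of $H'$ is forced to contain all of $X$ together with one vertex of $K_n$, and two such sets $X \cup \{u\}$ and $X \cup \{v\}$ differ by a single $\mathsf{TS}$-move if and only if $uv \in E(K_n)$, i.e., always whenever $u \neq v$; this reconfirms $\mathsf{TS}_k(H') \simeq K_n$ without re-invoking the proposition's proof.

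There is no genuine obstacle here: the work has already been done in Proposition~\ref{prop:TS-max-reconf-graph}, and the only thing to verify is the base case $\mathsf{TS}_1(K_n) \simeq K_n$, which is immediate from the definition of $\mathsf{TS}_1$. The only minor care to take is to note that $k \geq \alpha(H) = 1$ is indeed satisfied for all $k \geq 2$, so that the hypothesis of the proposition is met.
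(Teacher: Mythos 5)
Your proposal is correct and follows exactly the paper's route: the paper also obtains this corollary as a direct consequence of Proposition~\ref{prop:TS-max-reconf-graph} applied with $H = K_n$ (so $\alpha(H)=1$ and $\mathsf{TS}_1(K_n)\simeq K_n$), with the added-isolated-vertices construction giving the vertex and edge counts in Table~\ref{table:is-G-a-TSk-graph}. Your extra unpacking of the size-$k$ independent sets of $H'$ is a fine sanity check but adds nothing beyond the proposition.
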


\RA{
	A direct consequence of Lemma~\ref{lem:lgtf} and Proposition~\ref{prop:TS-max-reconf-graph} is as follows.
	\begin{corollary}\label{cor:PnCn-are-TSk-graphs}
		\begin{itemize}
			\item[(a)] $P_n$ is a $\mathsf{TS}_k$-reconfiguration graph, for any integers $k \geq 2$ and $n \geq 1$.
			
			\item[(b)] $C_n$ is a $\mathsf{TS}_k$-reconfiguration graph, for any integers $k \geq 2$ and $n \geq 3$.
		\end{itemize}
	\end{corollary}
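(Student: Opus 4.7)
The plan is to chain Lemma~\ref{lem:lgtf} with Proposition~\ref{prop:TS-max-reconf-graph}: the former realizes a line graph as a $\mathsf{TS}_2$-reconfiguration graph of the complement, and the latter then lifts the result to all $k \geq 2$ by padding with isolated vertices.

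For part~(a), I would use the identity $L_2(P_{n+1}) \simeq P_n$, valid for every $n \geq 1$, since two consecutive edges of a path share exactly one endpoint. As $P_{n+1}$ is acyclic and in particular $K_3$-free, Lemma~\ref{lem:lgtf} yields $\mathsf{TS}_2(\overline{P_{n+1}}) \simeq L_2(P_{n+1}) \simeq P_n$, showing $P_n$ is a $\mathsf{TS}_2$-reconfiguration graph. Setting $H = \overline{P_{n+1}}$, we have $\alpha(H) = \omega(P_{n+1}) = 2$, so Proposition~\ref{prop:TS-max-reconf-graph} upgrades the realization from $k = 2$ to every $k \geq 2$ by adding $k - 2$ fresh isolated vertices; this also accounts for the vertex and edge counts listed in Table~\ref{table:is-G-a-TSk-graph}.

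For part~(b) with $n \geq 4$, the argument is identical, with $C_n$ replacing $P_n$: the identity $L_2(C_n) \simeq C_n$ holds, $C_n$ is $K_3$-free, so Lemma~\ref{lem:lgtf} gives $\mathsf{TS}_2(\overline{C_n}) \simeq C_n$, and since $\alpha(\overline{C_n}) = \omega(C_n) = 2$, Proposition~\ref{prop:TS-max-reconf-graph} extends to all $k \geq 2$. The only real obstacle I anticipate is the case $n = 3$: here $C_3 \simeq K_3$ fails to be $K_3$-free and $\alpha(\overline{C_3}) = 3$, so the chain above collapses. I would dispose of this case separately by invoking Corollary~\ref{cor:Kn-is-a-TSk-graph}, which already realizes $K_3 \simeq C_3$ as a $\mathsf{TS}_k$-reconfiguration graph for every $k \geq 2$.
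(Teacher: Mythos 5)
Your argument is correct and follows essentially the same route as the paper: combine Lemma~\ref{lem:lgtf} (via $L_2(P_{n+1})\simeq P_n$ and $L_2(C_n)\simeq C_n$ with triangle-freeness) with Proposition~\ref{prop:TS-max-reconf-graph} using $\alpha(\overline{P_{n+1}})=\alpha(\overline{C_n})=2$, and dispose of $C_3\simeq K_3$ separately through Corollary~\ref{cor:Kn-is-a-TSk-graph}. No gaps.
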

	\begin{proof}
		\begin{itemize}
			\item[(a)] Since $P_n$ is triangle-free, from Lemma~\ref{lem:lgtf} we have 
			\[
			P_n \simeq L_2(P_{n+1}) \simeq \mathsf{TS}_2(\overline{P_{n+1}}).
			\]
			This settles the case $k = 2$.
			For $k = 3$, note that $\alpha(\overline{P_{n+1}}) = \omega(P_{n+1}) = 2$, therefore it follows from Proposition~\ref{prop:TS-max-reconf-graph} that $P_n$ is a $\mathsf{TS}_k$-reconfiguration graph for every $k \geq 2$.
			
			\item[(b)] Since $C_3 \simeq K_3$, Corollary~\ref{cor:Kn-is-a-TSk-graph} settles the case $n = 3$.
			For $n \geq 4$, again, since $C_n$ ($n \geq 4$) is triangle-free, from Lemma~\ref{lem:lgtf} we have 
			\[
			C_n \simeq L_2(C_{n}) \simeq \mathsf{TS}_2(\overline{C_{n}}).
			\]
			This settles the case $k = 2$.
			Again, for $k = 3$, note that $\alpha(\overline{C_n}) = \omega(C_n) = 2$, therefore it follows from Proposition~\ref{prop:TS-max-reconf-graph} that $C_n$ ($n \geq 4$) is a $\mathsf{TS}_k$-reconfiguration graph for every $k \geq 2$.
		\end{itemize}
	\end{proof}
}

Under certain conditions, one can construct a new $\mathsf{TS}_k$-reconfiguration graph from a known one, as in the following proposition.
\begin{proposition}\label{prop:add-vG}
	Let $I$ be an independent set of a given graph $G$.
	Let $k = \vert I \vert + 1$.
	Let $G^\prime$ be the graph obtained from $G$ by adding a new vertex $v_G$ and joining it to every vertex in $V(G) - I$.
	Then, $\mathsf{TS}_k(G^\prime)$ is obtained from $\mathsf{TS}_k(G)$ by adding a new node $I + v_G$ and all incident edges.
\end{proposition}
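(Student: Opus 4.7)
The plan is to verify the proposition by separately analyzing the vertex set and the edge set of $\mathsf{TS}_k(G^\prime)$. The guiding observation is that, by construction, $N_{G^\prime}(v_G) = V(G) \setminus I$, so $v_G$ is non-adjacent in $G^\prime$ to exactly the vertices of $I$; this single fact drives the whole argument.

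First I would enumerate $V(\mathsf{TS}_k(G^\prime))$ by splitting on whether $v_G$ belongs to a given size-$k$ independent set $J$ of $G^\prime$. If $v_G \notin J$, then $J \subseteq V(G)$ and $J$ is independent in $G$, hence $J \in V(\mathsf{TS}_k(G))$. If $v_G \in J$, then the other $k - 1$ vertices of $J$ must be non-adjacent to $v_G$ in $G^\prime$, forcing them into $I$; combined with $\vert J - v_G \vert = k - 1 = \vert I \vert$, this yields $J = I + v_G$. Conversely, $I + v_G$ is an independent set of $G^\prime$ because $I$ is independent in $G$ and $v_G$ has no $G^\prime$-neighbor in $I$. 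Thus $V(\mathsf{TS}_k(G^\prime)) = V(\mathsf{TS}_k(G)) \cup \{I + v_G\}$, and $I + v_G$ is a genuinely new node.

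For the edges, since $G$ is an induced subgraph of $G^\prime$, Proposition~\ref{prop:induced} gives immediately that $\mathsf{TS}_k(G)$ is an induced subgraph of $\mathsf{TS}_k(G^\prime)$, so no edges among old nodes are added or removed. It remains only to describe the neighbors of $I + v_G$ in $\mathsf{TS}_k(G^\prime)$. Any such neighbor $J$ lies in $V(\mathsf{TS}_k(G))$ by the uniqueness just established, so $v_G \notin J$; hence the vertex removed from $I + v_G$ to produce $J$ must be $v_G$, and $J = I + u$ for some $u \in V(G) \setminus I$ with $u v_G \in E(G^\prime)$. Since $v_G$ is adjacent in $G^\prime$ to every vertex of $V(G) \setminus I$, the edge condition reduces to the requirement that $I + u$ itself be an independent set of $G$, i.e., $I + u \in V(\mathsf{TS}_k(G))$, confirming that every possible incident edge of $I + v_G$ is indeed present.

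There is no substantive obstacle here; the argument is pure bookkeeping. The only subtlety worth flagging is that no neighbor of $I + v_G$ can arise by sliding a token off some $w \in I$ (rather than off $v_G$), but this is ruled out precisely by the uniqueness of the $v_G$-containing node established in the vertex step.
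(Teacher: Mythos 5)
Your proof is correct and follows essentially the same route as the paper: invoke Proposition~\ref{prop:induced} to see that $\mathsf{TS}_k(G)$ sits inside $\mathsf{TS}_k(G^\prime)$ as an induced subgraph, then argue that any size-$k$ independent set of $G^\prime$ containing $v_G$ must equal $I + v_G$ because $N_{G^\prime}(v_G) = V(G) - I$. Your extra characterization of the neighbors of $I + v_G$ is a harmless elaboration of what the paper leaves implicit.
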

\begin{proof}
	By definition, $G$ is an induced subgraph of $G^\prime$, and therefore by Proposition~\ref{prop:induced}, $\mathsf{TS}_k(G)$ is an induced subgraph of $\mathsf{TS}_k(G^\prime)$.
	It suffices to show that $I + v_G$ is the unique node in $V(\mathsf{TS}_k(G^\prime)) - V(\mathsf{TS}_k(G))$.
	Observe that for any $J \in V(\mathsf{TS}_k(G^\prime)) - V(\mathsf{TS}_k(G))$, we must have $v_G \in J$.
	Now, if $J \neq I + v_G$, there must be some $w \in V(G) - I$ such that $w \in J$.
	However, by definition of $G^\prime$, we have $wv_G \in E(G^\prime)$, which contradicts $\{w, v_G\} \subseteq J \in V(\mathsf{TS}_k(G^\prime))$.
	Therefore, $J = I + v_G$. 
	Our proof is complete.
	\qed\end{proof}

\begin{proposition}\label{prop:Kmn-isnt-a-TSk-graph}
$K_{m, n}$ is a $\mathsf{TS}_k$-reconfiguration graph for some integers $k \geq 2$ and $n \geq m \geq 1$ if and only if $m = 1$ and $n \leq k$ or $m = n = 2$.
\end{proposition}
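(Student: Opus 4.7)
The plan is to handle the ``if'' and ``only if'' directions separately.

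For the ``if'' direction, $K_{1,1} \simeq K_2$ is settled by Corollary~\ref{cor:Kn-is-a-TSk-graph} and $K_{2,2} \simeq C_4$ by Corollary~\ref{cor:PnCn-are-TSk-graphs}(b). For $K_{1,n}$ with $2 \leq n \leq k$, I would construct $G$ on vertex set $\{v_1, \ldots, v_k, u_1, \ldots, u_n\}$ where $\{u_1, \ldots, u_n\}$ forms a clique, each $v_iu_i$ is an edge for $i = 1, \ldots, n$, and there are no other edges. A routine check shows that the size-$k$ independent sets of $G$ are exactly $I_0 := \{v_1, \ldots, v_k\}$ and $I_i := I_0 - v_i + u_i$ for $i = 1, \ldots, n$, and that the only adjacencies in $\mathsf{TS}_k(G)$ are the slides $I_0 I_i$, yielding $\mathsf{TS}_k(G) \simeq K_{1,n}$.

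For the ``only if'' direction, assume $\mathsf{TS}_k(G) \simeq K_{m,n}$ with parts $A, B$ of sizes $m, n$. When $m = 1$ and $n \geq k+1$, let $I_0$ be the center and $I_1, \ldots, I_n$ the leaves, each written $I_j = I_0 - x_j + y_j$ with $x_jy_j \in E(G)$; by the pigeonhole principle some pair $I_i, I_j$ satisfies $x_i = x_j$, and non-adjacency in $\mathsf{TS}_k(G)$ forces $y_iy_j \notin E(G)$. Then for any $z \in I_0 \setminus \{x_i\}$, the set $I^* := I_0 - x_i - z + y_i + y_j$ is a size-$k$ independent set distinct from $I_0$ and from every $I_l$ (since $|I^* \setminus I_0| = 2$ while $|I_l \setminus I_0| = 1$), producing an extra node and a contradiction.

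When $m \geq 2$, pick $A_1, A_2 \in A$. Since $A_1, A_2$ share at least one common neighbor, the triangle inequality for the symmetric difference metric gives $|A_1 \triangle A_2| \in \{2, 4\}$. In the ``$|A_1 \triangle A_2| = 2$'' sub-case, write $A_1 = S + p, A_2 = S + q$ with $|S| = k-1$ and $pq \notin E(G)$; one checks by enumerating the two possible templates for common neighbors of $A_1, A_2$ that the alternative template $(S - u) + \{p, q\}$ is ruled out because it would force $u \in S \subseteq A_1$ to be adjacent in $G$ to $p \in A_1$, contradicting independence of $A_1$. Hence every common neighbor has the form $S + y$ with $y$ adjacent in $G$ to both $p, q$ and non-adjacent to $S$. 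If $n \geq 2$, picking $B_1 = S + y_1, B_2 = S + y_2$ gives $y_1y_2 \notin E(G)$, and $I^* := (S - s) + \{y_1, y_2\}$ (for any $s \in S$; take $I^* = \{y_1, y_2\}$ when $k = 2$) is an extra size-$k$ independent set distinct from every $A_i, B_j$, a contradiction. In the ``$|A_1 \triangle A_2| = 4$'' sub-case, write $A_1 = T + \{\alpha, \beta\}, A_2 = T + \{\gamma, \delta\}$ with $|T| = k-2$; a short counting argument comparing $|B \cap A_1|$ and $|B \cap A_2|$ shows every common neighbor must have the form $T + \{\eta, \zeta\}$ with $\eta \in \{\alpha, \beta\}, \zeta \in \{\gamma, \delta\}$, giving four candidates. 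The independence and adjacency constraints then split these into two mutually exclusive compatibility classes (one requiring $\alpha\delta, \beta\gamma \in E(G)$ and $\alpha\gamma, \beta\delta \notin E(G)$, the other the reverse), so at most two common neighbors exist and hence $n \leq 2$. Combined with $n \geq m \geq 2$, we conclude $m = n = 2$.

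The most delicate step is the $|A_1 \triangle A_2| = 2$ sub-case: eliminating the ``wrong'' common-neighbor template and then verifying that the auxiliary independent set $I^*$ built from two ``Type 1'' $B_j$'s is genuinely distinct from every labeled node of $\mathsf{TS}_k(G)$ rather than coinciding with some unexamined $A_i$ or $B_j$.
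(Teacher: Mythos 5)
Your proposal is correct and follows essentially the paper's route: the same construction (a size-$k$ stable set matched to a clique) for $K_{1,n}$ with $n\le k$, the same pigeonhole argument producing an extra independent set when $m=1$ and $n\ge k+1$, and the same dichotomy for $m\ge 2$ --- your split on $\vert A_1\,\Delta\,A_2\vert\in\{2,4\}$ is exactly the paper's two templates (a) and (b) for the $4$-cycle $I_1J_1I_2J_2$. Two local differences are worth noting. First, in the $\vert A_1\,\Delta\,A_2\vert=2$ sub-case you build the extra set $I^*=(S-s)+y_1+y_2$ from two $B$-side vertices and claim it coincides with no node; the distinctness from an unexamined $A_i$ (which you flag but do not verify) does need one more line, e.g.\ if $I^*=A_i$ then $I^*$ must be adjacent to $B_2=S+y_2$, forcing $sy_1\in E(G)$ and contradicting the independence of $B_1=S+y_1$, while every $B_j\supseteq S$ rules out $I^*\in B$. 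The paper sidesteps this by building its extra set $J=I+a_1+a_2$ from the two $A$-side vertices and observing that $J$, being a node of a complete bipartite graph, would have to be adjacent to $I_1$ or $J_1$, which fails. Second, in the $\vert A_1\,\Delta\,A_2\vert=4$ sub-case your compatibility-class count shows directly that $A_1,A_2$ have at most two common neighbors, hence $n\le 2$; this is a slightly cleaner (and marginally stronger) route than the paper's argument, which assumes $n>2$ and analyzes a hypothetical third common neighbor $J$ to reach a contradiction. With the one-line distinctness check supplied, the proof is complete.
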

\begin{proof}
\begin{itemize}
\item[($\Leftarrow$)] Corollary~\ref{cor:PnCn-are-TSk-graphs} settles the case $m = n = 2$, since $K_{2,2} \simeq C_4$.
It remains to consider the case $m = 1$ and $n \leq k$.
In this case, we claim that there exists a graph $G$ such that $\mathsf{TS}_k(G) \simeq K_{1,n}$.
Let $I_1 = \{a_1, \dots, a_k\}$ be an independent set of size $k$, and let $K_n$ be a clique whose vertices are $b_1, \dots, b_n$. 
We construct $G$ by joining each $a_i$ with $b_i$, for $1 \leq i \leq n$.

It remains to show that $\mathsf{TS}_k(G) \simeq K_{1,n}$.
Note that $V(\mathsf{TS}_k(G)) = I_1 \cup \bigcup_{i=1}^nJ_i$, where $J_i = I_1 - a_i + b_i$ for $1 \leq i \leq n$.
It follows that $\mathsf{TS}_k(G)$ has exactly $n+1$ vertices.
Additionally, from the construction of $G$, the set $I_1$ is adjacent to every $J_i$ ($1 \leq i \leq n$) in $\mathsf{TS}_k(G)$, and for every $1 \leq i < j \leq n$, we always have $\vert J_j - J_i \vert = \vert J_i - J_j \vert = 2$, which implies that $J_iJ_j \notin E(\mathsf{TS}_k(G))$.
Therefore, $\mathsf{TS}_k(G) \simeq K_{1,n}$.

\item[($\Rightarrow$)] 
We first show that if $m = 1$ and $n \geq k+1$, there does not exist any $G$ such that $\mathsf{TS}_k(G) \simeq K_{m,n}$, where $k \geq 2$.
Suppose to the contrary that $n \geq k+1$ and $G$ exists. 
Let $V(\mathsf{TS}_k(G)) = \{I_1, J_1, \dots, J_n\}$ and assume without loss of generality that $I_1$ is adjacent to $J_i$ in $\mathsf{TS}_k(G)$ for $1 \leq i \leq n$.
Since $n \geq k+1$, by the pigeonhole principle, there must be some vertex $u \in I_1$ such that sliding the token on $u$ results at least two different size-$k$ independent sets of $G$, say $J_1$ and $J_2$, that are both adjacent to $I_1$.
Then, we can write $I_1 = I + x + u$, $J_1 = I + x + v$, and $J_2 = I + x + w$, for some size-$(k-2)$ independent set $I$ of $G$ such that none of the distinct vertices $u, v, w, x$ is in $I$.
By definitions of $J_1$ and $J_2$, both $v$ and $w$ are not in $I_1$.
Now, let $J_3 = I + v + w$.
One can verify that $J_3 \in V(\mathsf{TS}_k(G))$ and therefore must be adjacent to $I_1$ in $\mathsf{TS}_k(G)$, which implies $\vert J_3 - J_1 \vert = 1$.
However, note that $\{v, w\} \subseteq J_3 - I_1$, which is a contradiction.

\begin{figure}[!ht]
	\centering
	\begin{tikzpicture}[every node/.style={draw, thick, circle, minimum width=0.3cm, fill=white, transform shape}, scale=0.7]
		\begin{scope}
			\node [label=above:$I_1$] (I1) at (0,0) {$I + x + a_1$};
			\node [label=above:$J_1$] (J1) at (3,0) {$I + x + b_1$};
			\node [label=below:$I_2$] (I2) at (0,-3) {$I + x + a_2$};
			\node [label=below:$J_2$] (J2) at (3,-3) {$I + x + b_2$};
			
			\draw[thick] (I1) -- (J1) -- (I2) -- (J2) -- (I1);
			\node[draw=none] at (1.5, -4.5) {(a)};
		\end{scope}
		\begin{scope}[xshift={7cm}]
			\node [label=above:$I_1$] (I1) at (0,0) {$I + a_1 + a_2$};
			\node [label=above:$J_1$] (J1) at (3,0) {$I + b_1 + a_2$};
			\node [label=below:$I_2$] (I2) at (0,-3) {$I + b_1 + b_2$};
			\node [label=below:$J_2$] (J2) at (3,-3) {$I + a_1 + b_2$};
			
			\draw[thick] (I1) -- (J1) -- (I2) -- (J2) -- (I1);
			\node[draw=none] at (1.5, -4.5) {(b)};
		\end{scope}
	\end{tikzpicture}
	\caption{Two possible forms of the $4$-cycle $I_1J_1I_2J_2$ in the proof of Proposition~\ref{prop:Kmn-isnt-a-TSk-graph}. Here $I$ is a size-$(k-2)$ independent set of a graph $G$.}
	\label{fig:Kmn-isnt-a-TSk-graph}
\end{figure}
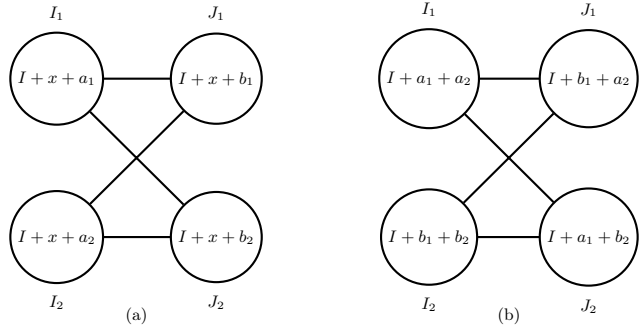

It remains to show that if $m \geq 2$ and $n > 2$, there does not exist any $G$ such that $\mathsf{TS}_k(G) \simeq K_{m,n}$.
Again, suppose to the contrary that $m \geq 2$, $n > 2$, and $G$ exists.
Suppose that $\mathsf{TS}_k(G)$ has the partite sets $X = \{I_1, \dots, I_m\}$ and $Y = \{J_1, \dots, J_n\}$, where $I_i$ ($1 \leq i \leq m$) and $J_j$ ($1 \leq j \leq n$) are size-$k$ independent sets of $G$.
Let consider the length-$4$ cycle $I_1J_1I_2J_2$, and suppose that we initially slide tokens in $I_1$.
To form such a cycle, at most two tokens in $I_1$ can be moved from their original positions, otherwise we need to perform more than four token-slides to obtain a cycle.
More formally, for some size-$(k-2)$ independent set $I$ of $G$, the sets $I_1, J_1, I_2, J_2$ can only be in one of the following two forms (see \figurename~\ref{fig:Kmn-isnt-a-TSk-graph}):
\begin{itemize}
	\item[(a)] $I_i = I + x + a_i$ and $J_i = I + x + b_i$ ($1 \leq i \leq 2$). 
	Intuitively, this corresponds to sliding a single token in $I_1$ along the cycle $a_1b_1a_2b_2$ of $G$.
	\item[(b)] $I_1 = I + a_1 + a_2$, $J_1 = I + b_1 + a_2$, $I_2 = I + b_1 + b_2$, and $J_2 = I + a_1 + b_2$.
	Intuitively, this corresponds to sliding tokens ``back and forth'' along the edges $a_1b_1$ and $a_2b_2$ of $G$. 
\end{itemize}
In both cases, none of $x$, $a_i$ and $b_i$ is in $I$, for $1 \leq i \leq 2$.
It remains to show that both cases lead to some contradiction.
Now, if (a) happens, let $J = I + a_1 + a_2$.
One can verify that $J \in \mathsf{TS}_k(G)$.
By definition of $J$, it cannot be adjacent to $I_1$ in $\mathsf{TS}_k(G)$, otherwise $xa_2 \in E(G)$, which contradicts $I_2 = I + x + a_2 \in V(\mathsf{TS}_k(G))$.
Additionally, $J$ cannot be adjacent to $J_1$, because $\{a_1, a_2\} \subseteq J - J_1$.
However, since $\mathsf{TS}_k(G) \simeq K_{m,n}$, it follows that $J$ must be adjacent to either $I_1$ or $J_1$, which is a contradiction.

It remains to consider the case (b) happens.
In this case, let $J \in \mathsf{TS}_k(G)$ be such that $J$ is adjacent to both $I_1$ and $I_2$ and $J \notin \{J_1, J_2\}$.
Since $n > 2$, such a set $J$ exists.
Now, if $\{a_1, a_2\} \subseteq J$, we also have $\{a_1, a_2\} \subseteq J - I_2 = J - (I + b_1 + b_2)$.
(Both $a_1$ and $a_2$ are not in $I$.)	
This contradicts the adjacency of $J$ and $I_2$.
Therefore, $\{a_1, a_2\} \nsubseteq J$, and similarly, so does $\{b_1, b_2\}$.
Since $J$ is adjacent to both $I_1$ and $I_2$, it contains at least one member of $\{a_1, a_2\}$ and $\{b_1, b_2\}$, respectively.
It follows that either $\{a_1, b_2\}$ or $\{a_2, b_1\}$ is in $J$.
Now, if $\{a_1, b_2\} \subseteq J$, it follows that $J = I + a_1 + x = I + b_2 + y$ for some $x, y \in V(G)$ such that $xa_2, yb_1 \in E(G)$.
Then, it follows that $x = b_2$ and $y = a_1$, that is, $J = I + a_1 + b_2 = J_1$, a contradiction.
The case $\{a_2, b_1\} \subseteq J$ can be showed similarly.
Our proof is complete.
\end{itemize}
\qed\end{proof}

\begin{lemma}\label{lem:Kn-subgraph}
Given a graph $G$.
Then, if $\mathsf{TS}_k(G)$ has a $K_n$, so does $G$, for integers $k \geq 2$ and $n \geq 3$.
\end{lemma}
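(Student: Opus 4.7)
The plan is to prove, by induction on $n$, the following structural claim: any $K_n$ in $\mathsf{TS}_k(G)$, that is, any family $\{I_1, \ldots, I_n\}$ of pairwise $\mathsf{TS}$-adjacent size-$k$ independent sets, must have the form $I_i = A \cup \{a_i\}$ for one fixed $(k-1)$-subset $A \subseteq V(G)$ and distinct vertices $a_1, \ldots, a_n \notin A$, with $\{a_1, \ldots, a_n\}$ inducing a $K_n$ in $G$. This immediately implies the lemma.

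The base case $n = 3$ is the heart of the argument and proceeds by direct case analysis. Set $A = I_1 \cap I_2$, which has size $k-1$, and write $I_1 = A + a_1$, $I_2 = A + a_2$ with $a_1 a_2 \in E(G)$. Since $I_3$ is adjacent to $I_1$ in $\mathsf{TS}_k(G)$ we have $|I_1 \cap I_3| = k-1$, leaving two subcases: either $A \subseteq I_3$ with $a_1 \notin I_3$, or $I_3 = (A - b) + a_1 + c$ for some $b \in A$ and $c \notin I_1$. In the first subcase, matching $|I_2 \cap I_3| = k-1$ forces $I_3 = A + a_3$ for some $a_3 \notin \{a_1, a_2\}$ (otherwise $I_3 = I_2$), and then adjacency of $I_3$ with $I_1$ and $I_2$ yields $a_1 a_3, a_2 a_3 \in E(G)$, so $\{a_1, a_2, a_3\}$ induces a triangle in $G$. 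In the second subcase, matching $|I_2 \cap I_3| = k-1$ forces $c = a_2$, placing both $a_1$ and $a_2$ inside the independent set $I_3$; this contradicts $a_1 a_2 \in E(G)$, so the subcase cannot occur.

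For the inductive step with $n \geq 4$, assume the claim for $n - 1$, so $I_j = A + a_j$ for $j = 1, \ldots, n-1$ with $\{a_1, \ldots, a_{n-1}\}$ a clique in $G$. Applying the $n = 3$ analysis to the triple $\{I_1, I_2, I_n\}$, and using the uniqueness $A = I_1 \cap I_2$, gives $I_n = A + a_n$ with $a_n a_1, a_n a_2 \in E(G)$. For each $3 \leq i \leq n - 1$, applying the $n = 3$ analysis to $\{I_1, I_i, I_n\}$---whose common $(k-1)$-set is again $A = I_1 \cap I_i$---yields $a_n a_i \in E(G)$, so $\{a_1, \ldots, a_n\}$ induces $K_n$ in $G$. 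The main obstacle is the base-step case analysis; in particular, the second subcase must be ruled out via the independence of $I_3$ together with $a_1 a_2 \in E(G)$, and the induction relies on the fact that the common $(k-1)$-set $A$ is uniquely determined as the intersection of any two of the sets $I_j$, so all inductive triples refer to the same $A$.
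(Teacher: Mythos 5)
Your proof is correct. Its heart---the case analysis showing that a triangle $I_1, I_2, I_3$ in $\mathsf{TS}_k(G)$ must consist of three sets of the form $A+a_1$, $A+a_2$, $A+a_3$ with $\{a_1,a_2,a_3\}$ a triangle of $G$, the alternative case being ruled out because it would either force two adjacent vertices of $G$ into a single independent set or make two of the sets differ in two tokens---is the same argument the paper gives for $n=3$, merely with the exhaustive split organized around adjacency with $I_1$ rather than with $I_2$. Where you go beyond the paper is the inductive step: the paper's written proof stops at the triangle case and never explicitly treats $n\ge 4$, whereas you prove the stronger structural claim that every $K_n$ of $\mathsf{TS}_k(G)$ has the form $\{A+a_1,\dots,A+a_n\}$ for a fixed $(k-1)$-set $A$ and a clique $\{a_1,\dots,a_n\}$ of $G$. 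That stronger form is exactly what the paper later uses without restating it (for instance in the proofs of Proposition~\ref{prop:conn-split-isnt-a-TSk-graph} and Proposition~\ref{prop:cliques-in-graphs}(b), where the nodes of a clique in $\mathsf{TS}_k(G)$ are written as $I+b_i$), so your version both completes the general-$n$ case and records the statement the applications actually need. One small simplification is available in your inductive step: once you know $I_n=A+a_n$, the edge $a_na_i\in E(G)$ for $3\le i\le n-1$ follows directly from the adjacency of $I_i=A+a_i$ and $I_n$ in $\mathsf{TS}_k(G)$, with no need to rerun the triangle analysis.
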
	
\begin{proof}
We first consider the case $n=3$, fix any $k \ge 2$ and suppose that $\mathsf{TS}_k(G)$
has a triangle labelled by independent sets $I_1, I_2, I_3$. Without loss of generality we may
assume $I_1 = \{a_1, \dots, a_{k-1}, w\}$ and $I_2 = \{a_1, \dots, a_{k-1}, x\}$, where $wx \in E(G)$. 
Now, there are two possibilities for $I_3$: either (1) $I_3 = \{a_1, \dots, a_{k-1}, y\}$ where $xy \in E(G)$ or (2) $I_3 = \{a_1, \dots, a_{i-1}, y, a_{i+1}, \dots, a_{k-1}, x\}$ where $a_iy \in E(G)$ for some $i \in \{1, \dots, k-1\}$.
Since $I_1$ and $I_3$ are adjacent in $\mathsf{TS}_k(G)$, only (1) can happen.
Therefore, we must have $yw \in E(G)$ and so $w,x,y$ define a triangle in $G$.
\qed\end{proof}

To conclude this section, we show that a connected split graph is a $\mathsf{TS}_k$-reconfiguration graph if and only if it satisfies certain restricted conditions, as described in the following lemma.

\begin{proposition}\label{prop:conn-split-isnt-a-TSk-graph}
Fix $k \geq 2$. A connected split graph $F = (K \cup S, E)_{\text{$K$-max}}$ is a $\mathsf{TS}_k$-reconfiguration graph if and only if $\vert N_F(v) \cap S \vert \leq k-1$ and $\vert N_F(w) \vert = 1$ for every $v \in K$ and $w \in S$.
\end{proposition}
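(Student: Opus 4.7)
My plan is to handle the two directions of the biconditional separately; the forward direction will rely on ``isolated vertex'' arguments against the connectedness of $F$, while the backward direction builds $G$ explicitly from the combinatorial data of $F$.

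For $(\Leftarrow)$, assuming the two conditions, I plan to construct $G$ as follows. Take an independent set $A = \{a_1, \dots, a_{k-1}\}$ and a disjoint clique $B = \{b_v : v \in K\}$, with no edges between $A$ and $B$. Using $\vert N_F(v) \cap S \vert \leq k-1$, fix for each $v \in K$ an injection $\iota_v : N_F(v) \cap S \hookrightarrow \{1, \dots, k-1\}$; then for each $w \in S$ with unique $K$-neighbor $\phi(w)$, introduce a new vertex $c_w$ whose neighborhood is $\{a_{\iota_{\phi(w)}(w)}\} \cup \{b_u : u \in K,\ u \neq \phi(w)\}$, and make the $c_w$'s pairwise adjacent. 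A direct enumeration shows that the size-$k$ independent sets of $G$ are precisely $I_v := A + b_v$ for $v \in K$ and $I_w := A - a_{\iota_{\phi(w)}(w)} + c_w + b_{\phi(w)}$ for $w \in S$, and the map $v \mapsto I_v,\ w \mapsto I_w$ realizes the required isomorphism: $I_v \sim I_{v'}$ via the slide $b_v \to b_{v'}$, and $I_v \sim I_w$ exactly when $v = \phi(w)$ via the slide $a_{\iota(w)} \to c_w$, with no other adjacencies arising.

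For $(\Rightarrow)$, assume $F \simeq \mathsf{TS}_k(G)$ and set $\kappa = \vert K \vert$; the cases $\kappa \leq 1$ are trivial, so take $\kappa \geq 2$. By Lemma~\ref{lem:Kn-subgraph} (with direct inspection when $\kappa = 2$) I may write $K = \{I_j = A + b_j : 1 \leq j \leq \kappa\}$ for some common $(k-1)$-independent set $A$ of $G$ and $\kappa$-clique $\{b_1, \dots, b_\kappa\}$ of $G$, with every $a_i$ non-adjacent to every $b_j$. Suppose toward contradiction that $w \in S$ has two neighbors $I_{j_1}, I_{j_2}$ in $K$: slide-by-slide analysis forces $w = A + v$ with $v$ adjacent to both $b_{j_1}$ and $b_{j_2}$. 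Writing $J = \{j : v b_j \in E(G)\}$, the case $\vert J \vert = \kappa$ produces a $(\kappa+1)$-clique $\{A + b_j : j\} \cup \{A + v\}$ in $F$, contradicting $\omega(F) = \kappa$. Otherwise I pick $j_0 \notin J$ and consider $U_i := A - a_i + v + b_{j_0}$, a size-$k$ independent set of $G$; direct checks (using $v$ non-adjacent to $A$ and to $b_{j_0}$, and $a_i$ non-adjacent to $b_{j_0}$) show $U_i$ has no edge to any vertex of $K$, so being forced into $S$ it has no $F$-neighbors at all, contradicting connectedness.

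For $\vert N_F(v) \cap S \vert \leq k-1$, let $S_j = N_F(I_j) \cap S$ and classify each $w \in S_j$ by the unique slide from $I_j$ producing it (available by the previous step): Case 1 if the slide is $b_j \to v$ (so $w = A + v$), and Case 2 with index $i$ if the slide is $a_i \to v$ (so $w = A - a_i + v + b_j$). Running the same isolated-vertex argument on auxiliary sets such as $A - a_i + v_1 + v_2$ and $A - a_i - a_{i'} + v_i + v_{i'} + b_j$---using the stability of $S$ to force certain edges in $G$---bounds $S_j$ to at most one Case 1 element and at most one Case 2 element per index, giving $\vert S_j \vert \leq k$. To sharpen this to $k-1$, I rule out the extremal case in which $S_j$ contains a Case 1 element $w_0 = A + v_0$ together with a Case 2 element $w_i$ for every index $i$: stability forces $\{v_0, v_1, \dots, v_{k-1}\}$ to be a $k$-clique in $G$, after which, using $\kappa \geq 2$ and picking any $j' \neq j$, the set $A - x + b_{j'} + v_0$ is a size-$k$ independent set with no edge to any element of $K$, producing one more isolated vertex and the sought contradiction. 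The main obstacle throughout is bookkeeping: each proposed ``isolated'' set has to be shown non-adjacent to every possible neighbor type in $F$---elements of $K$, of $S_j$ in each form, and of $S_{j'}$ for other $j'$---but each individual check reduces to the non-existence of a specific edge (typically one between $A$ and $B$) and is routine given the structure already established.
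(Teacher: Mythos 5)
Your proposal is correct and follows essentially the same route as the paper's: the backward direction uses the same construction (a $(k-1)$-element independent set $A$, a clique $\{b_v\}$ representing $K$, and a clique of new vertices $c_w$, each joined to exactly one $a_i$ and to all $b_u$ with $u \neq \phi(w)$), and the forward direction uses the same devices, namely the common-$A$ form $I_j = A + b_j$ of the clique $K$ obtained via Lemma~\ref{lem:Kn-subgraph} and the ``size-$k$ independent set of $G$ with no neighbour in $K$ is an isolated vertex of $F$, contradicting connectivity'' argument with the same auxiliary sets. The only organizational difference is in proving $\vert N_F(v) \cap S \vert \leq k-1$: the paper first shows the Case~1 form $A + v_0$ is impossible for any $w \in S$ once unique $K$-neighbours are established (using exactly your set $A - x + b_{j'} + v_0$ with $j' \neq j$, which needs $\kappa \geq 2$), and then pigeonholes the remaining Case~2 neighbours over the $k-1$ indices, whereas you bound the counts per type to get $\vert S_j \vert \leq k$ and then kill only the extremal configuration with that same auxiliary set; both versions work. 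One assertion in your write-up is unjustified as stated: stability of $S$ does not force $\{v_0, v_1, \dots, v_{k-1}\}$ to be a clique of $G$, because the corresponding nodes of $\mathsf{TS}_k(G)$ differ in more than one vertex, so their non-adjacency is automatic and carries no edge information (such edges could be forced, but only by further isolated-vertex arguments, not by stability); fortunately your final contradiction uses only the non-adjacency of $v_0$ to $A$ and to $b_{j'}$, never the clique, so the proof stands.
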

\begin{proof}
\begin{itemize}
\item[$(\Leftarrow)$]
For a connected split graph $F = (K \cup S, E)_{\text{$K$-max}}$ suppose that $\vert N_F(v) \cap S \vert \leq k-1$ and $\vert N_F(w) \vert = 1$ for every $v \in K$, $w \in S$.
Additionally, let $m = \vert K \vert$ and $n = \vert S \vert$.
Suppose that $K = \{v_1, \dots, v_m\}$ and let $n_i = \vert N_F(v_i) \cap S \vert = \vert \{w^i_1, \dots, w^i_{n_i}\} \vert$ for $1 \leq i \leq m$.
Observe that $n = \sum_{i=1}^mn_i$ and by assumption $n_i \leq k-1$ for every $1 \leq i \leq m$.
We construct a graph $G$ such that $\mathsf{TS}_k(G) \simeq F$ as follows.
(See \figurename~\ref{fig:split-TSk}.)
\begin{itemize}
	\item Let $I = \{a_1, \dots, a_{k-1}\}$ be an independent set.
	Let $K_m$ be a size-$m$ complete graph with $V(K_m) = \{b_1, \dots, b_m\}$.
	Let $K_n$ be a size-$n$ complete graph with $V(K_n) = \bigcup_{i=1}^m\bigcup_{j=1}^{n_i}\{x^i_j\}$.
	\item The graph $G$ has $V(G) = I \cup V(K_n) \cup V(K_m)$.
	The edges of $G$ are defined by joining each $x^i_j \in V(K_n)$ to $a_j \in I$ ($1 \leq j \leq n_i$) and every vertex in $V(K_m) - b_i$.
\end{itemize}
\begin{figure}[!ht]
	\centering
		\begin{tikzpicture}[scale=0.75, every node/.style={circle, draw, thick, minimum size=0.5cm, fill=white, transform shape}]
			\begin{scope}[shift={(0,0.5)}]
				\node[label=left:{\large $a_1a_2\dots a_{k-1}b_1$}] (k1) at (0,0) {};
				\node[label=left:{\large $a_1a_2\dots a_{k-1}b_2$}] (k2) at (0,-1) {};
				\node[label=left:{\large $a_1a_2\dots a_{k-1}b_m$}] (km) at (0,-3) {};
				
				\node[label=right:{\large $x^1_1a_2\dots a_{n_1}\dots b_1$}] (x11) at (2,1) {};
				\node[label=right:{\large $a_1x^1_2\dots a_{n_1}\dots b_1$}] (x12) at (2,0) {};
				\node[label=right:{\large $a_1a_2\dots x^1_{n_1}\dots b_1$}] (x1n1) at (2,-1.5) {};
				
				\node[label=right:{\large $x^m_1a_2\dots a_{n_m}\dots b_m$}] (xm1) at (2,-2.5) {};
				\node[label=right:{\large $a_1a_2\dots x^m_{n_m}\dots b_m$}] (xmnm) at (2,-3.5) {};
				
				\draw[thick] (k1) -- (x11) (k1) -- (x12) (k1) -- (x1n1) (km) -- (xm1) (km) -- (xmnm);
				\draw[thick, dotted] ([yshift={-0.1cm}]k2.south) -- ([yshift={0.1cm}]km.north) ([yshift={-0.1cm}]x12.south) -- ([yshift={0.1cm}]x1n1.north) ([yshift={-0.1cm}]x1n1.south) -- ([yshift={0.1cm}]xm1.north) ([yshift={-0.1cm}]xm1.south) -- ([yshift={0.1cm}]xmnm.north) ;
				
				\begin{scope}[on background layer]
					\node[rectangle, draw, rounded corners, minimum width=0.7cm, minimum height=4.5cm, fill=gray!20!white, label=below:{\large $K$}] at (0, -1.5) {};
					\node[rectangle, draw, rounded corners, minimum width=0.7cm, minimum height=5.5cm, label=below:{\large $S$}, dashed] at (2, -1.3) {};
				\end{scope}
				
				\node[rectangle, minimum height=0.4cm, text width=5cm, draw=none, fill=none] at (1.5, -5) {\large $F = (K \cup S, E)_{\text{$K$-max}}$};
			\end{scope}
			\begin{scope}[shift={(7,0)}]
				\node[label=above:{\large $a_1$}, fill=gray] (a1) at (0,1) {};
				\node[label=above:{\large $a_2$}, fill=gray] (a2) at (1.5,1) {};
				\node[label={[label distance=-0.1cm]above:{\large $a_{n_1}$}}, fill=gray] (an1) at (2.5,1) {};
				\node[label={[label distance=-0.15cm]above:{\large $a_{n_m}$}}, fill=gray] (anm) at (3.5,1) {};
				\node[label={[label distance=-0.18cm]above:{\large $a_{k-1}$}}, fill=gray] (ak-1) at (4.5,1) {};
				
				\node[label=above:{\large $x^1_1$}, fill=gray] (x11) at (-1,-1) {};
				\node[label=above:{\large $x^1_2$}, fill=gray] (x12) at (0,-1) {};
				\node[label={[label distance=-0.09cm, xshift={-0.4cm}]above:{\large $x^1_{n_1}$}}, fill=gray] (x1n1) at (2,-1) {};
				
				\node[label={[xshift={-0.35cm},label distance=-0.05cm]above:{\large $x^m_1$}}, fill=gray] (xm1) at (3,-1) {};
				\node[label={[label distance=-0.05cm]above:{\large $x^m_2$}}, fill=gray] (xm2) at (3.7,-1) {};
				\node[label={[label distance=-0.15cm]above:{\large $x^m_{n_m}$}}, fill=gray] (xmnm) at (5,-1) {};
				
				\node[label=below:{\large $b_1$}, fill=gray] (b1) at (0,-3) {};
				\node[label=below:{\large $b_2$}, fill=gray] (b2) at (1,-3) {};
				\node[label={[label distance=-0.15cm]below:{\large $b_{m-1}$}}, fill=gray] (bm-1) at (3,-3) {};
				\node[label={[label distance=-0.05cm]below:{\large $b_{m}$}}, fill=gray] (bm) at (4,-3) {};
				
				\draw[thick, dotted] ([xshift={0.1cm}]a2.east) -- ([xshift={-0.1cm}]an1.west) ([xshift={0.1cm}]an1.east) -- ([xshift={-0.1cm}]anm.west)
				([xshift={0.1cm}]anm.east) -- ([xshift={-0.1cm}]ak-1.west) ([xshift={0.1cm}]b2.east) -- ([xshift={-0.1cm}]bm-1.west) ([xshift={0.1cm}]x12.east) -- ([xshift={-0.1cm}]x1n1.west) ([xshift={0.1cm}]x1n1.east) -- ([xshift={-0.1cm}]xm1.west) ([xshift={0.1cm}]xm2.east) -- ([xshift={-0.1cm}]xmnm.west);
				
				\draw[thick] (a1) -- (x11) (a1) -- (xm1) (a2) -- (x12) (a2) -- (xm2) (an1) -- (x1n1) (anm) -- (xmnm);
				
				\draw[thick] (x11) -- (b2) (x11) -- (bm-1) (x11) -- (bm) (x12) -- (b2) (x12) -- (bm-1) (x12) -- (bm) (x1n1) -- (b2) (x1n1) -- (bm-1) (x1n1) -- (bm);
				\draw[thick] (xm1) -- (b1) (xm1) -- (b2) (xm1) -- (bm-1) (xm2) -- (b1) (xm2) -- (b2) (xm2) -- (bm-1) (xmnm) -- (b1) (xmnm) -- (b2) (xmnm) -- (bm-1);
				
				\foreach \i in {1,2,n1,nm,k-1} {
					\draw[thick] (a\i) -- ([yshift={-0.2cm}]a\i.south) (a\i) -- ([xshift={0.2cm},yshift={-0.2cm}]a\i.south);
				}
				
				\begin{scope}[on background layer]
					\node[rectangle, draw, rounded corners, minimum width=5.5cm, minimum height=0.7cm, dashed, label=right:{\large $I$}] at (2.25, 1) {};
					\node[rectangle, draw, rounded corners, minimum width=6.7cm, minimum height=0.7cm, fill=gray!20!white, label=right:{\large $K_n$}] at (2, -1) {};
					\node[rectangle, draw, rounded corners, minimum width=5cm, minimum height=0.7cm, fill=gray!20!white, label=right:{\large $K_m$}] at (2, -3) {};
				\end{scope}
				
				\node[rectangle, minimum height=0.4cm, text width=3cm, draw=none, fill=none] at (3, -4.5) {\large $G$};
			\end{scope}
		\end{tikzpicture}
	\caption{Construction of a graph $G$ such that $F \simeq \mathsf{TS}_k(G)$, where $F$ is a connected split graph satisfying Proposition~\ref{prop:conn-split-isnt-a-TSk-graph}. Vertices of $F$ are labeled by size-$k$ stable sets of $G$. Vertices in a light gray (resp., dashed) box forms a clique (resp., stable set).}
	\label{fig:split-TSk}
\end{figure}
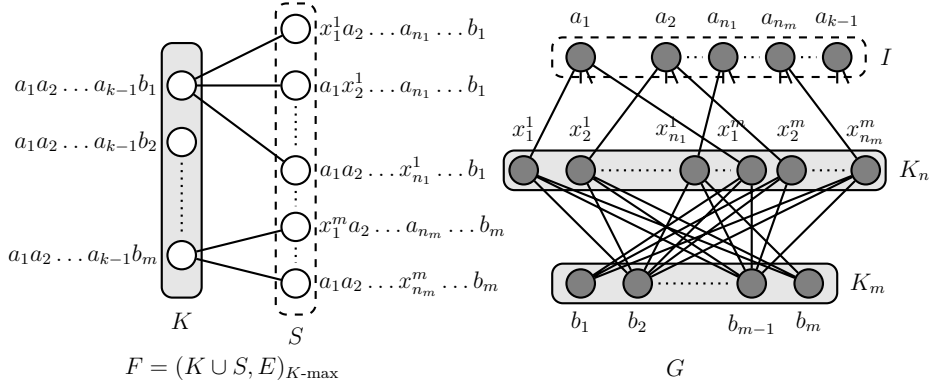
Let $f: V(F) \to V(\mathsf{TS}_k(G))$ be such that $f(v_i) = K^i = I + b_i$ and $f(w^i_j) = K^i - a_j + x^i_j$, for $1 \leq i \leq m$ and $1 \leq j \leq n_i$. 
One can verify that $f$ is well-defined and bijective.
We claim that $F \simeq_f \mathsf{TS}_k(G)$, i.e., $uv \in E(F)$ if and only if $f(u)f(v) \in E(\mathsf{TS}_k(G))$.
\begin{itemize}
	\item[$(\Rightarrow)$] Suppose that $uv \in E(F)$, i.e., either both $u, v$ are in $K$ or $u \in K$ and $v \in S$.
	If $u = v_i \in K$ and $v = v_j \in K$, since $K_m$ is a complete subgraph of $G$, we have $f(u)f(v) = K^iK^j \in E(\mathsf{TS}_k(G))$.
	Otherwise, if $u = v_i \in K$ and $v = w^i_j \in S$, since $a_j$ is the only vertex in $\{a_1, \dots, a_{k-1}\}$ adjacent to $x^i_j$ in $G$, we have $f(u)f(v) = K^i(K^i - a_j + x^i_j) \in E(\mathsf{TS}_k(G))$.
	\item[$(\Leftarrow)$] Suppose that $f(u)f(v) \in E(\mathsf{TS}_k(G))$, i.e., either both $f(u), f(v)$ are in $\bigcup_{i=1}^mK^i$ or $f(u) \in \bigcup_{i=1}^mK^i$ and $f(v) \in \bigcup_{i=1}^m\bigcup_{j=1}^{n_i}(K^i - a_j + x^i_j)$.
	If $f(u) = K^i$ and $f(v) = K^j$, by definition, we have $uv = v_iv_j \in E(F)$.
	Otherwise, if $f(u) = K^i$ and $f(v) \in \bigcup_{i=1}^m\bigcup_{j=1}^{n_i}(K^i - a_j + x^i_j)$, since $f(u)f(v) \in E(\mathsf{TS}_k(G))$, we must have $f(v) = K^i - a_j + x^i_j$ for some $j \in \{1, \dots, n_i\}$, and therefore $uv = v_iw^i_j \in E(F)$.
\end{itemize}

\item[$(\Rightarrow)$] Suppose that either (1) there exists $w \in S$ such that $\vert N_F(w) \vert \geq 2$ or (2) $\vert N_F(w) \vert = 1$ for every $w \in S$ and there exists $v \in K$ such that $\vert N_F(v) \cap S \vert \geq k$.
We claim that $F$ is not a $\mathsf{TS}_k$-reconfiguration graph.
Suppose to the contrary that there exists a graph $G$ such that $F \simeq_f \mathsf{TS}_k(G)$.
Since $F$ is connected, so is $\mathsf{TS}_k(G)$.
From Lemma~\ref{lem:Kn-subgraph}, we must have $f(v_i) = I + b_i$ ($1 \leq i \leq m$) where $\bigcup_{i=1}^m\{b_i\}$ forms a complete subgraph $K_m$ of $G$ and $I = \{a_1, \dots, a_{k-1}\}$ is an independent set of $G - N_G[V(K_m)]$.

If (1) holds, since $\vert K \vert = \omega(F)$, there exists $i \in \{1, \dots, m\}$ such that $v_iw \notin E(F)$, otherwise $K + w$ forms a clique in $F$ of size $\omega(F) + 1$, which is a contradiction.
Lemma~\ref{lem:Kn-subgraph} also implies that $f(w) = I + x$ for some $x \in V(G) - V(K_m) - I$.
Since $v_iw \notin E(F)$, we also have $f(v_i)f(w) = (I+b_i)(I+x) \notin E(\mathsf{TS}_k(G))$, which implies $b_ix \notin E(G)$.
Then, one can verify that $J = (I - a_1) + b_i + x \in V(\mathsf{TS}_k(G))$.
Note that $b_i \notin f(v_j) = I + b_j$ for $j \neq i$ and $1 \leq j \leq m$.
Since $b_ix \notin E(G)$, we have $x \neq b_j$.
Additionally, since $x \notin I$, we must also have $x \notin f(v_j)$.
Since $\mathsf{TS}_k(G)$ is connected, $J$ must be adjacent to some $f(v_j)$ ($1 \leq j \leq m$), which is a contradiction since $\{b_i, x\} \subseteq J - f(v_j)$ for $j \neq i$ (i.e., $J$ is not adjacent to any $f(v_j)$ for $j \neq i$) and $a_1x \notin E(G)$ (i.e., $J$ is not adjacent to $f(v_i) = I + b_i$).

If (2) holds, suppose that $v = v_i$ for some $i \in \{1, \dots, m\}$.
As before, one can show that if $f(w) = I + x$ for some $w \in N_F(v) \cap S$, there must be some contradiction.
Therefore, for every $w \in N_F(v) \cap S$, we must have $f(w) = (I - a_j) + x + b_i$ for some $j \in \{1, \dots, k-1\}$.
Since $\vert N_F(v) \cap S \vert \geq k$, by the pigeonhole principle, there exists $j \in \{1, \dots, k-1\}$ such that $f(v) = f(v_i) = I + b_i$ has two distinct non-adjacent neighbors $J_1 = (I - a_j) + x_1 + b_i$ and $J_2 = (I - a_j) + x_2 + b_i$.
Again, both $x_1$ and $x_2$ are not in $I$, and since none of them are adjacent to $b_i$, they are also not in $\{b_1, \dots, b_m\}$.
This implies $\{x_1, x_2\} \cap f(v_i) = \emptyset$ for $1 \leq i \leq m$.
One can verify that $J = (I - a_j) + x_1 + x_2 \in V(\mathsf{TS}_k(G))$ and it is not adjacent to any $f(v_i)$ ($1 \leq i  \leq m$) since $\{x_1, x_2\} \subseteq J - f(v_i)$.
This contradicts the connectivity of $\mathsf{TS}_k(G)$.
\end{itemize}
\qed\end{proof}

\RA{
We conclude this section with a general result that applies to
all graphs.
$K_{1,3}$ is also known as the \textit{claw} and adding an edge between any two of its degree-$1$ vertices results the \textit{paw}.
$K_4 - e$ is also known as the \textit{diamond}.
(See \figurename~\ref{fig:conn-4}.)
An \textit{outerplanar graph} is a graph that has a planar drawing for which all vertices belong to the outer face of the drawing.
A \textit{maximal outerplanar graph} is an outerplanar graph in which adding a single edge breaks outerplanarity.

\begin{figure}[!ht]
\centering
\begin{adjustbox}{max width=\textwidth}
	\begin{tikzpicture}[every node/.style={draw, thick, circle, fill=white, minimum width=3mm}]
		\begin{scope}
			\foreach \i/\x/\y in {0/0/0,1/1/0,2/1/1,3/0/1} {
				\node (\i) at (\x, \y) {};
			}
			\draw[thick] (2) -- (3) -- (0) -- (1);
			\node[rectangle, draw=none, fill=none] at (0.5,-0.35) {$P_4$};
		\end{scope}
		\begin{scope}[shift={(2,0)}]
			\foreach \i/\x/\y in {0/0/0,1/1/0,2/1/1,3/0/1} {
				\node (\i) at (\x, \y) {};
			}
			\draw[thick] (2) -- (0) (3) -- (0) -- (1);
			\node[rectangle, draw=none, fill=none] at (0.5,-0.35) {claw};
		\end{scope}
		\begin{scope}[shift={(4,0)}]
			\foreach \i/\x/\y in {0/0/0,1/1/0,2/1/1,3/0/1} {
				\node (\i) at (\x, \y) {};
			}
			\draw[thick] (3) -- (0) (2) -- (0) -- (1) -- (2);
			\node[rectangle, draw=none, fill=none] at (0.5,-0.4) {paw};
		\end{scope}
		\begin{scope}[shift={(0,-2)}]
			\foreach \i/\x/\y in {0/0/0,1/1/0,2/1/1,3/0/1} {
				\node (\i) at (\x, \y) {};
			}
			\draw[thick] (2) -- (3) -- (0) -- (1) -- (2);
			\node[rectangle, draw=none, fill=none] at (0.5,-0.35) {$C_4$};
		\end{scope}
		\begin{scope}[shift={(2,-2)}]
			\foreach \i/\x/\y in {0/0/0,1/1/0,2/1/1,3/0/1} {
				\node (\i) at (\x, \y) {};
			}
			\draw[thick] (2) -- (3) -- (0) -- (1) -- (2) -- (0);
			\node[rectangle, draw=none, fill=none] at (0.5,-0.35) {diamond};
		\end{scope}
		\begin{scope}[shift={(4,-2)}]
			\foreach \i/\x/\y in {0/0/0,1/1/0,2/1/1,3/0/1} {
				\node (\i) at (\x, \y) {};
			}
			\draw[thick] (2) -- (3) -- (0) -- (1) -- (2) -- (0) (3) -- (1);
			\node[rectangle, draw=none, fill=none] at (0.5,-0.35) {$K_4$};
		\end{scope}
	\end{tikzpicture}
\end{adjustbox}
\caption{Connected graphs on four vertices.}
\label{fig:conn-4}
\end{figure}
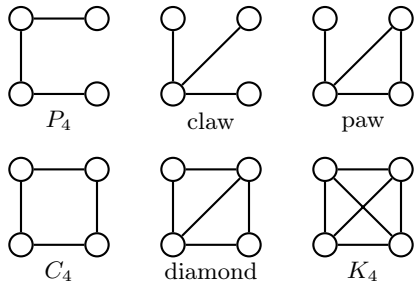

\begin{proposition}\label{prop:general}
\begin{itemize}
	\item[(a)] Let $G$ be a graph whose components are $G_1, G_2, \dots, G_p$, for some $p \geq 1$.
	Suppose that all $G_i$ ($1 \leq i \leq p$) are $\mathsf{TS}_k$-reconfiguration graphs for some fixed integer $k \geq 2$.
	Then, so is $G$.
	\item[(b)] The diamond $K_4 - e$ is the unique smallest graph that is not a $\mathsf{TS}_k$-reconfiguration graph for any $k \geq 2$.
	\item[(c)] The following generalized graphs on $n \geq 4$ vertices of the diamond are not $\mathsf{TS}_k$-reconfiguration graphs for any $k \geq 2$: Maximal outerplanar graphs and $K_n - e$.
	\item[(d)] There exists a disconnected $\mathsf{TS}_k$-reconfiguration graph $G$ such that one of its components is not a $\mathsf{TS}_k$-reconfiguration graph for any $k \geq 2$.
\end{itemize} 
\end{proposition}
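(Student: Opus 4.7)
The plan is to handle the four parts with distinct techniques, with (b) and (c) sharing a common core.

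Part~(a) is a clean join construction. Set $H = H_1 \vee H_2 \vee \cdots \vee H_p$, where each $H_i$ realizes $G_i \simeq \mathsf{TS}_k(H_i)$. The join edges force every independent set of $H$ to lie entirely inside a single $H_i$, so $\mathsf{TS}_k(H)$ splits as $\bigsqcup_{i=1}^{p} \mathsf{TS}_k(H_i) \simeq G$.

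For part~(b), the realizability direction is a routine enumeration. For each graph on at most three vertices and each four-vertex graph other than the diamond, I either cite earlier results (Corollaries~\ref{cor:Kn-is-a-TSk-graph} and~\ref{cor:PnCn-are-TSk-graphs} for complete graphs, paths, cycles; Proposition~\ref{prop:Kmn-isnt-a-TSk-graph} for $K_{1,n}$) or invoke~(a) for disconnected targets; the graph $nK_1$ is realized as $\mathsf{TS}_k$ of $K_{k,\dots,k}$ with $n$ parts. The only case needing a dedicated construction is the paw, which I realize as $\mathsf{TS}_2$ of the five-vertex graph obtained from the triangle $x_1x_2x_3$ by adding the pendant edge $ww'$ and the further edges $x_2w'$ and $x_3w'$; a short enumeration of its four size-$2$ independent sets confirms the paw. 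The main content of (b) is the non-realizability of the diamond: suppose $\mathsf{TS}_k(G)\simeq K_4 - e$, with triangles $I_1I_2I_3$ and $I_1I_3I_4$. I apply the argument of Lemma~\ref{lem:Kn-subgraph} to both triangles, then observe that the common size-$(k{-}1)$ independent set $I_1 \cap I_3$ forces both applications to produce the same $I = \{a_1,\dots,a_{k-1}\}$; hence $I_j = I + w_j$ for $j=1,\dots,4$, with $\{w_1,w_2,w_3,w_4\}$ inducing in $G$ a diamond whose unique non-edge $w_2w_4$ matches the non-edge $I_2I_4$. Since $k\geq 2$ guarantees $|I|\geq 1$, the set $I' = (I - a_1) + w_2 + w_4$ is a further size-$k$ independent set of $G$ distinct from every $I_j$ (it misses $a_1$), producing a fifth vertex of $\mathsf{TS}_k(G)$ and contradicting that the diamond has only four.

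Part~(c) refines the same contradiction. Whenever $F\simeq\mathsf{TS}_k(G)$ contains an induced diamond $\{I_1,\dots,I_4\}$ with non-edge $I_2I_4$, the construction above produces a vertex $I'\in V(F)$ distinct from these four and non-adjacent in $F$ to all of them, so the task becomes to show that no such $I'$ can exist in $F$. For $F = K_n - e$ with $n\geq 5$, the unique non-edge of $F$ forces $\{I_2,I_4\}$ to be its endpoints, so $I_1$ is adjacent in $F$ to every other vertex and there is no room for $I'$. For $F$ maximal outerplanar on $n\geq 4$ vertices, $F$ is $K_4$-free, so any two adjacent triangles of $F$ yield an induced diamond; I choose the pair corresponding to a centroid edge of the weak dual tree and argue that the four resulting vertices form a dominating set of $F$, again forbidding $I'$. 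The main technical obstacle is this dominating-diamond claim, which I intend to prove by exploiting the fact that the triangles containing a fixed vertex of $F$ form a subtree of the weak dual, so that any triangle is close to a centroid edge and must therefore share a vertex with one of the two triangles bounding it.

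For part~(d), let $G$ be the graph on $\{a_1,w,x,y,z\}$ in which $a_1$ is isolated and $\{w,x,y,z\}$ induces a diamond whose unique non-edge is $xz$. A direct enumeration shows $G$ has exactly five size-$2$ independent sets: the four sets $\{a_1,w\}$, $\{a_1,x\}$, $\{a_1,y\}$, $\{a_1,z\}$, which form a diamond component of $\mathsf{TS}_2(G)$, and the set $\{x,z\}$, which is isolated because any slide from it would require $a_1$ to be adjacent to $x$ or $z$. Thus $\mathsf{TS}_2(G) \simeq (K_4 - e) \sqcup K_1$, and (d) follows from~(b).
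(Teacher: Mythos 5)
Parts (a), (b) and (d) are correct and essentially the paper's own route: the join construction for (a), the kite-type base graph for the paw, the ``fifth independent set'' $(I-a_1)+w_2+w_4$ for the non-realizability of the diamond (your two-triangle gluing step, giving the common $(k-1)$-set $A$ and the induced diamond $w_1w_2w_3w_4$ in $G$, is a correct elaboration of what the paper compresses into a citation of Lemma~\ref{lem:Kn-subgraph}), and a diamond-plus-isolated-vertex base graph for (d) (if (d) is read as holding for every $k\ge 2$, just take the diamond plus $(k-1)$ isolated vertices, as the paper does). Your treatment of $K_n-e$ in (c) is also sound: the unique non-edge must host $\{I_2,I_4\}$, so $I_1$ is adjacent to every other node of $F$, and the vertex $I'$, which is distinct from and non-adjacent to $I_1$, cannot exist.

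The genuine gap is the maximal outerplanar case of (c). You reduce it to the claim that some pair of adjacent triangles of $F$ (chosen at a centroid edge of the weak dual) has a vertex set dominating $F$, so that $I'$ has nowhere to go; but this claim is false. In a serpentine (zigzag) triangulation of a polygon every vertex has degree at most $4$, so four vertices dominate at most $20$ vertices, and already for the snake on ten vertices one checks directly that no two adjacent facial triangles dominate (the middle pair misses both degree-$2$ ears). So no choice of diamond can ``forbid $I'$'' by domination, and the subtree-of-triangles heuristic you sketch cannot rescue it. The repair is to show that $I'$ cannot be a node of $F$ at all, rather than that it has no place among the neighbours of the diamond: in a maximal outerplanar graph every vertex lies on a facial triangle and the weak dual is a tree, and if a facial triangle has node set $\{A+x,A+y,A+z\}$ then an adjacent triangle shares two nodes $A+x$, $A+y$ whose intersection is exactly $A$, which forces (by the triangle argument of Lemma~\ref{lem:Kn-subgraph}) its third node also to be of the form $A+w$; propagating along the dual tree, \emph{every} node of $F$ equals $A+w$ for one fixed $(k-1)$-set $A$. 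Since $k\ge 2$, your $I'=(A-a_1)+w_2+w_4$ omits $a_1\in A$, so it is a size-$k$ independent set of $G$ that is not of this form and hence not a node of $F$, contradicting $F\simeq \mathsf{TS}_k(G)$. With that replacement (which is presumably what the paper's one-line ``similar argument'' intends), the rest of your write-up stands.
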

\begin{proof}
\begin{itemize}
	\item[(a)] Let $H_i$ ($1 \leq i \leq p$) be such that $G_i = \mathsf{TS}_k(H_i)$.
	Let $H$ be the graph obtained by joining each vertex of $H_i$ to all vertices of $H_j$, for every $1 \leq i < j \leq p$.
	From the above construction, any size-$k$ independent set of $H$ must be also an independent set of $H_i$, for some $i \in \{1, \dots, p\}$.
	As a result, $\mathsf{TS}_k(H)$ is the disjoint union of $\mathsf{TS}_k(H_i)$ for all $i$, that is, $G = \mathsf{TS}_k(H)$.
	
	\item[(b)] Graphs on at most three vertices are disjoint union of paths and cycles, and since any path or cycle is a $\mathsf{TS}_k$-reconfiguration graph for $k \geq 2$ (Corollary~\ref{cor:PnCn-are-TSk-graphs}),  it follows from (a) that so are their disjoint unions.
	If a graph on four vertices is disconnected, each of its components has at most three vertices, and we are done.
	
	Thus, it suffices to consider six connected graphs on four vertices described in \figurename~\ref{fig:conn-4}.
	Among these graphs, $P_4$, $C_4$, and $K_4$ are always $\mathsf{TS}_k$-reconfiguration graphs for $k \geq 2$ and the claw $K_{1,3}$ is a $\mathsf{TS}_k$-reconfiguration graph for $k \geq 3$.
	Let $H$ be the \textit{kite}---the graph obtained from a diamond by attaching a leaf to one of its degree-$2$ vertices.
	One can verify that $\mathsf{TS}_2(H)$ is indeed a paw, and since $\alpha(H) = 2$, it follows from Proposition~\ref{prop:TS-max-reconf-graph} that $\mathsf{TS}_k(H + (k-2)K_1)$ is also a paw, i.e., the paw is always a $\mathsf{TS}_k$-reconfiguration graph for $k \geq 2$.
	
	It remains to show that the diamond is not a $\mathsf{TS}_k$-reconfiguration graph for any $k \geq 2$.
	Suppose to the contrary that there exists a graph $Q$ such that $K_4 - e = \mathsf{TS}_k(Q)$.
	It follows from Lemma~\ref{lem:Kn-subgraph} that vertices of $K_4 - e$ must be of the form $I_1 = I+a$, $I_2 = I+b$, $I_3 = I+c$, $I_4 = I+d$, where $I$ is a size-$(k-1)$ independent set of $Q$ and $\{a, b, c, d\} \subseteq V(Q) - I$ induce a diamond in which, say $a$ and $c$ are non-adjacent. 
	However, for any $x \in I$, note that $J = (I - x) + a + c$ is a size-$k$ independent set of $Q$, and therefore it must be a vertex of $K_4 - e$, which is a contradiction since $J \notin \{I_1, \dots, I_4\}$.
	
	\item[(c)] Use a similar argument as in the proof for the diamond.
	
	\item[(d)] Take $G = \mathsf{TS}_k(\text{diamond} + (k-1)K_1)$. 
	It is not hard to verify that the diamond is a component of $G$ and from (b) it is not a $\mathsf{TS}_k$-reconfiguration graph for any $k \geq 2$.
\end{itemize}
\end{proof}
}

\section{Properties of Reconfiguration Graphs}
\label{sec:properties}
In this section we study various graph properties to see if they are inherited by their %
$\mathsf{TS}$-reconfiguration
graphs, and vice versa.
\RR{
The notation $\mathcal{P}(G) \Rightarrow \mathcal{P}(\mathsf{TS}(G))$ means: if you assume a property $\mathcal{P}$ holds for $G$ (in certain cases, the property always holds, but in some other cases, it may or may not hold), is it true (and under which conditions) that $\mathcal{P}$ also holds for $\mathsf{TS}(G)$?
Conversely, $\mathcal{P}(\mathsf{TS}(G)) \Rightarrow \mathcal{P}(G)$ means that if $\mathcal{P}$ holds for $\mathsf{TS}(G)$ then when does it hold for $G$? For any fixed $k$ we define similar notations $\mathcal{P}(G) \Rightarrow \mathcal{P}(\mathsf{TS}_k(G))$ and $\mathcal{P}(\mathsf{TS}_k(G)) \Rightarrow \mathcal{P}(G)$.
Since $G \simeq \mathsf{TS}_1(G)$, the case $k=1$ is uninteresting and therefore we only consider $k \geq 2$.
}

\RR{
For example, suppose that $G$ is $P_n$ and $\mathcal{P}$ is ``connected''. What we want to know is: if $P_n$ is connected (which is obviously true), is it true (and under which conditions) that $\mathsf{TS}(P_n)$ is also connected? 
This question is trivial for $\mathsf{TS}(P_n)$, because by definition it contains \textit{all} stable sets of $G$ as vertices, and one cannot, for example, join a stable set of size $k$ and a stable set of size $k+1$ by an edge.
This means $\mathsf{TS}(P_n)$ is not connected.
Even for $\mathsf{TS}_k(P_n)$ with some fixed $k$, this question is not much harder, since one can verify that any two size-$k$ stable sets of $P_n$ can be connected by a path in $\mathsf{TS}_k(P_n)$ (just pushing tokens toward one endpoint of the path), which means the graph is clearly connected.
These questions are more challenging with respect to different graphs $G$ and properties $\mathcal{P}$.
Our results are summarized in Table~\ref{table:TS-graph-properties}.
}
\begin{table}[!ht]
\centering
\caption{Some properties of (reconfiguration) graphs. Here $n = \vert V(G) \vert$. There are four cases: (a) $\mathcal{P}(G) \Rightarrow \mathcal{P}(\mathsf{TS}(G))$, (b) $\mathcal{P}(\mathsf{TS}(G)) \Rightarrow \mathcal{P}(G)$, (c) $\mathcal{P}(G) \Rightarrow \mathcal{P}(\mathsf{TS}_k(G))$, and (d) $\mathcal{P}(\mathsf{TS}_k(G)) \Rightarrow \mathcal{P}(G)$.}
\label{table:TS-graph-properties}
\begin{tabular}{|c|c||c|c|c|c|c|}
	\hline
	$\mathcal{P}$ & $G$ & (a) & (b) & (c) & (d) & Ref.\\
	\hline
	$s$-partite & general & \multicolumn{3}{|c|}{yes} & no & Prop.~\ref{prop:s-partite}\\
	\hline
	\multirow{4}{*}{planar} & $P_n$ & \multicolumn{2}{|c|}{yes, iff $n \leq 8$} & \multicolumn{2}{|c|}{\begin{tabular}{@{}c@{}}yes, iff $k = 2$, $n \geq 3$\\ or $k \geq 3$, $n \leq 8$\end{tabular}} & Prop.~\ref{prop:paths-planarity} \\
	\cline{2-7}
	& tree & \multicolumn{4}{|c|}{yes, iff $n \leq 7$} & Prop.~\ref{prop:trees-planarity}\\
	\cline{2-7}
	& $C_n$ & \multicolumn{4}{|c|}{\multirow{2}{*}{yes, iff $n \leq 6$}} & \multirow{2}{*}{Prop.~\ref{prop:cycles-planarity}}\\
	\cline{2-2}
	& connected & \multicolumn{4}{|c|}{} & \\
	\hline
	\multirow{2}{*}{Eulerian} & $C_n$ & no & yes & \multicolumn{2}{|c|}{\RA{yes, iff $1 \leq k < n/2$}} & Prop.~\ref{prop:cycles-Eulerian} \\
	\cline{2-7}
	& general & no & yes & no & no & Prop.~\ref{prop:Eulerian} \\
	\hline
	infinite girth & $P_n$ & \multicolumn{2}{|c|}{\RA{yes, iff $n \leq 4$}} & \multicolumn{2}{|c|}{\RA{yes, iff $n \leq 2k$}} & Prop.~\ref{prop:paths-girth} \\
	\cline{1-7}
	finite girth & $C_n$ & \multicolumn{2}{|c|}{\RA{yes}}& \multicolumn{2}{|c|}{\RA{yes, iff $1 \leq k < n/2$}} & Prop.~\ref{prop:cycles-girth}\\
	\hline
	having $K_s$ & general & \multicolumn{2}{|c|}{yes} & no & yes & Prop.~\ref{prop:cliques-in-graphs}\\
	\hline
\end{tabular}%
\end{table}

\subsection{$s$-Partitedness}\label{sec:s-partitedness}

A \textit{proper $s$-coloring} of a graph $G$ is a mapping $f: V(G) \to \{0, \dots, s-1\}$ such that $f(u) \neq f(v)$ if $uv \in E(G)$, where $u, v \in V(G)$ and $s$ is a positive integer.
If $G$ has a proper $s$-coloring, we call it a \textit{$s$-partite} graph.
The \textit{chromatic number} of a graph $G$, denoted by $\chi(G)$, is the smallest number $s$ such that $G$ has a proper $s$-coloring.

\begin{proposition}\label{prop:s-partite}
\begin{itemize}
\item[(a)] $G$ is $s$-partite if and only if $\mathsf{TS}(G)$ is $s$-partite. In other words, $\chi(G) = \chi(\mathsf{TS}(G))$.
\item[(b)] For each $s \ge 2$, one can construct a graph $G$ such that $\chi(\mathsf{TS}_k(G)) < \chi(G)=s$,
for every $2 \leq k \leq \alpha(G)$.
\end{itemize}
\end{proposition}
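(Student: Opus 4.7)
My plan for (a) is to transfer a proper $s$-coloring of $G$ to $\mathsf{TS}(G)$ via a sum-modulo-$s$ rule. Fixing a proper $s$-coloring $c \colon V(G) \to \{0, \dots, s-1\}$, I would define $C(I) = \sum_{v \in I} c(v) \pmod{s}$ for each (nonempty) independent set $I$ of $G$. If $I$ and $J$ are adjacent in $\mathsf{TS}(G)$, then by definition there exist $u, v \in V(G)$ with $I - J = \{u\}$, $J - I = \{v\}$, and $uv \in E(G)$, so $c(u) \neq c(v)$ and hence $C(I) - C(J) \equiv c(u) - c(v) \not\equiv 0 \pmod{s}$. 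Therefore $C$ is a proper $s$-coloring of $\mathsf{TS}(G)$, which gives $\chi(\mathsf{TS}(G)) \leq \chi(G)$. The same argument applied to size-$k$ independent sets shows $\chi(\mathsf{TS}_k(G)) \leq \chi(G)$ as well, which is the fact underlying the ``yes'' entries in column (c) of Table~\ref{table:TS-graph-properties}.

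For the converse direction of (a), I would simply observe that $G \simeq \mathsf{TS}_1(G)$ is the induced subgraph of $\mathsf{TS}(G)$ on the singleton independent sets, so $\chi(G) \leq \chi(\mathsf{TS}(G))$. Combining the two inequalities settles (a).

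For (b), my candidate is the balanced complete $s$-partite graph $G = K_{n,n,\dots,n}$ with $s$ parts of size $n \geq 2$, which satisfies $\chi(G) = s$ and $\alpha(G) = n$. The key observation is that every independent set of $G$ is contained in a single part, since distinct parts are fully joined. Consequently, no $\mathsf{TS}$-move is ever possible on a size-$k$ independent set when $k \geq 2$: sliding a token from $u$ to a neighbour $v$ would require $v$ to lie in a part different from the one containing $u$, and then the resulting set would contain two adjacent vertices from different parts, contradicting independence. Hence $\mathsf{TS}_k(G)$ is a nonempty edgeless graph for every $2 \leq k \leq n$, so $\chi(\mathsf{TS}_k(G)) = 1 < s$. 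I expect the only real subtlety of the whole proof to lie in choosing the sum-mod-$s$ coloring in (a); the converse and the construction for (b) each reduce to a short structural observation.
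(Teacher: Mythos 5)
Your proof is correct. Part (a) is essentially identical to the paper's argument: the same sum-modulo-$s$ coloring $C(I)=\sum_{v\in I}c(v) \bmod s$ for one direction, and $G \simeq \mathsf{TS}_1(G)$ being a subgraph of $\mathsf{TS}(G)$ for the other. Part (b), however, takes a genuinely different route. The paper starts from an \emph{arbitrary} graph $G'$ with $\chi(G')=s$ and adds a universal vertex $x$; since $x$ lies in no stable set of size $\geq 2$, one gets $\mathsf{TS}_k(G)\simeq\mathsf{TS}_k(G')$ for all $2\le k\le\alpha(G)=\alpha(G')$, and then part (a) applied to $G'$ yields $\chi(\mathsf{TS}_k(G))\le\chi(G')=\chi(G)-1$. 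You instead take the balanced complete $s$-partite graph, observe that every independent set lives inside one part so that $\mathsf{TS}_k(G)$ is nonempty and edgeless for $2\le k\le\alpha(G)$, and conclude $\chi(\mathsf{TS}_k(G))=1<s$; this argument is self-contained (it does not reuse part (a)) and produces the extreme possible gap, and it also matches the statement's normalization $\chi(G)=s$ exactly, whereas the paper's construction as written yields $\chi(G)=\chi(G')+1=s+1$ (a harmless off-by-one one fixes by starting from $\chi(G')=s-1$). What the paper's cone construction buys in exchange is generality: it shows the drop in chromatic number can be arranged on top of \emph{any} base graph, with $\chi(\mathsf{TS}_k(G))$ able to stay as large as $\chi(G)-1$, rather than only on a rigid family where the reconfiguration graph degenerates to an edgeless graph.
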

\begin{proof}
\begin{itemize}
\item[(a)] (by Masahiro Takahashi.) 
The if direction \RR{follows directly from} $G \simeq \mathsf{TS}_1(G)$.
It remains to show that if $G$ is $s$-partite, so is $\mathsf{TS}(G)$.
Since $G$ is $s$-partite, there exists a proper $s$-coloring $f: V(G) \to \{0, \dots, s-1\}$ of vertices of $G$.
For each independent set $I$ of $G$, let $g(I) = \big( \sum_{v \in I}f(v) \big) \mod s$.
Since $G$ is $s$-partite, each $\mathsf{TS}$-step only slides a token from one color class to another, and therefore if $I$ and $J$ are adjacent in $\mathsf{TS}(G)$, $g(I) \neq g(J)$.
As a result, $g$ is a $s$-coloring of $\mathsf{TS}(G)$, which means $\mathsf{TS}(G)$ is $s$-partite.

To see that $\chi(G) = \chi(\mathsf{TS}(G))$, note that since $G \simeq \mathsf{TS}_1(G)$ is a subgraph of $\mathsf{TS}(G)$, we have $\chi(G) \leq \chi(\mathsf{TS}(G))$.
On the other hand, if $G$ has a $\chi(G)$-coloring, so does $\mathsf{TS}(G)$, i.e., $\chi(\mathsf{TS}(G)) \leq \chi(G)$.

\item[(b)] Fix $s \ge 2$. Let $G^\prime$ be any graph with $\chi(G')=s$ and construct $G$ from $G'$ by adding an additional vertex
$x$ adjacent to all vertices in $V(G')$. We have $\alpha(G) = \alpha(G')$ and since a new colour is required for $x$,  $\chi(G) = \chi(G')+1$. 
For each $2 \leq k \leq \alpha(G)$ the stable sets of size $k$ in $G$ and
$G'$ are identical and so is their adjacency, hence $\mathsf{TS}_k(G) \simeq \mathsf{TS}_k(G')$.
Since $\mathsf{TS}_k(G')$ is a subgraph of $\mathsf{TS}(G')$ we may use part (a) to obtain:
$\chi(\mathsf{TS}_k(G)) = \chi(\mathsf{TS}_k(G')) \le \chi(\mathsf{TS}(G')) =\chi(G')=\chi(G)-1$.
\end{itemize}
\qed\end{proof}

\subsection{Planarity}\label{sec:planarity}

A graph is \textit{planar} if one can draw it in the plane such that its edges intersect only at their endpoints.
The well-known Kuratowski's Theorem says that a graph $G$ is planar if and only if it does not contain any subdivision of $K_5$ or $K_{3,3}$ as a subgraph.

\begin{proposition}\label{prop:paths-planarity}
\begin{itemize}
\item[(a)] $\mathsf{TS}_2(P_n)$ is planar for every $n \geq 3$.
\item[(b)] $\mathsf{TS}_3(P_n)$ is planar for every $n \leq 8$
and non-planar otherwise.
\item[(c)] $\mathsf{TS}(P_n)$ is planar for $n \leq 8$
and non-planar otherwise.
\end{itemize}
\end{proposition}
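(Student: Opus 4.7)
The plan is as follows. For (a), I would encode any size-$2$ independent set $\{v_i, v_j\}$ of $P_n$ as the lattice point $(i, j) \in \mathbb{Z}^2$ lying in the triangular region $R = \{(i, j) : 1 \le i,\ i + 2 \le j \le n\}$. A single $\mathsf{TS}$-move corresponds precisely to a unit step along one coordinate axis, provided the destination still belongs to $R$. Consequently $\mathsf{TS}_2(P_n)$ is a subgraph of the unit grid graph on $R$ and inherits the obvious straight-line planar drawing from $\mathbb{R}^2$.

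For (b) with $n \le 8$, I would use the analogous gap-coordinate encoding. Writing a size-$3$ independent set as $\{i_1 < i_2 < i_3\}$ and letting $g_0 = i_1 - 1$, $g_1 = i_2 - i_1 - 2$, $g_2 = i_3 - i_2 - 2$, $g_3 = n - i_3$, one obtains a bijection between $V(\mathsf{TS}_3(P_n))$ and the nonnegative integer solutions of $g_0 + g_1 + g_2 + g_3 = n - 5$; each edge corresponds to shifting one unit between adjacent coordinates $g_j$ and $g_{j+1}$. Since every edge changes $g_0 + g_2$ by exactly $\pm 1$, the graph is bipartite. For each $n \le 8$ the vertex count is at most $\binom{6}{3} = 20$, so I would give an explicit planar drawing by layering vertices according to $g_3$: each layer is a small triangular grid (trivially planar), and successive layers are joined by the $g_2 \leftrightarrow g_3$ edges, which can be routed along the boundary without crossings.

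For (b) with $n \ge 9$, Proposition~\ref{prop:induced} implies that $\mathsf{TS}_3(P_9)$ is an induced subgraph of $\mathsf{TS}_3(P_n)$, so it suffices to show $\mathsf{TS}_3(P_9)$ is non-planar. Since this graph is bipartite, no $K_5$-subdivision is possible, so I would exhibit a $K_{3,3}$-subdivision. Concretely, I take the six branch vertices $A_1 = (0,2,1,1)$, $A_2 = (1,0,2,1)$, $A_3 = (1,1,2,0)$ on the odd side of the bipartition, and $B_1 = (1,1,1,1)$, $B_2 = (0,1,2,1)$, $B_3 = (1,2,1,0)$ on the even side; six of the nine $A_iB_j$ pairs are already direct edges, and the remaining three ($A_1B_3$, $A_2B_3$, $A_3B_2$) are realized by short paths through interior vertices of the graph. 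The main obstacle is verifying that the three connecting paths can be chosen pairwise internally disjoint and avoiding all branch vertices, which requires a careful case check because the graph has only $60$ edges on $35$ vertices.

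Finally, for (c), by definition of $\mathsf{TS}(G)$ every edge joins independent sets of the same cardinality, so $\mathsf{TS}(P_n)$ is the vertex-disjoint union of $\mathsf{TS}_k(P_n)$ for $0 \le k \le \alpha(P_n) = \lceil n/2 \rceil$. A disjoint union is planar iff every component is, so $\mathsf{TS}(P_n)$ is planar iff every $\mathsf{TS}_k(P_n)$ is. For $n \le 8$, parts (a) and (b) handle $k \in \{2, 3\}$; the case $k = 1$ is $\mathsf{TS}_1(P_n) \simeq P_n$, and for $k = 4$ one verifies directly that $\mathsf{TS}_4(P_7)$ is a single vertex and $\mathsf{TS}_4(P_8)$ is a path on $5$ vertices. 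For $n \ge 9$, the non-planarity of the component $\mathsf{TS}_3(P_n)$ established in (b) immediately gives the non-planarity of $\mathsf{TS}(P_n)$.
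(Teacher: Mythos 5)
Your overall strategy coincides with the paper's: for (a) the paper gives the same grid-embedding argument (each node $\{v_i,v_j\}$ has its neighbours among $\{v_{i\pm1},v_j\},\{v_i,v_{j\pm1}\}$, so $\mathsf{TS}_2(P_n)$ embeds in an $n\times n$ grid); for (b) the paper likewise exhibits an explicit planar drawing of $\mathsf{TS}_3(P_8)$ (\figurename~\ref{fig:TS3P8}) and an explicit $K_{3,3}$-subdivision in $\mathsf{TS}_3(P_9)$ (\figurename~\ref{fig:K33-minor-TS3P9}, with different branch vertices from yours), extended to $n\ge 9$ via Proposition~\ref{prop:induced}; and (c) is the same decomposition of $\mathsf{TS}(P_n)$ into the levels $\mathsf{TS}_k(P_n)$, which you make more explicit than the paper does. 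Your gap-coordinate encoding is essentially a tidier bookkeeping device for the same constructions. Two remarks on the points you leave open. First, the step you flag as ``the main obstacle'' does go through: your adjacency pattern among the six branch vertices is correct (exactly the pairs $A_1B_3$, $A_2B_3$, $A_3B_2$ are non-adjacent), and the three connectors can be chosen internally disjoint and avoiding all branch vertices, e.g.\ $A_1=(0,2,1,1)\to(0,3,0,1)\to(1,2,0,1)\to B_3$; $A_2=(1,0,2,1)\to(1,0,3,0)\to(0,1,3,0)\to(0,2,2,0)\to(0,3,1,0)\to B_3$; and $A_3=(1,1,2,0)\to(2,0,2,0)\to(2,0,1,1)\to(2,0,0,2)\to(1,1,0,2)\to(1,0,1,2)\to(0,1,1,2)\to B_2$; each step is a single transfer between adjacent gap coordinates and the three sets of internal vertices are pairwise disjoint, so your $K_{3,3}$-subdivision is complete. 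Your layered drawing for $n\le 8$ can also be made to work (every vertex of each triangular layer lies on the outer face of its grid drawing and the inter-layer matching respects the boundary order), and in any case planarity of $\mathsf{TS}_3(P_n)$ for $n\le 8$ follows from the $n=8$ drawing together with Proposition~\ref{prop:induced}.

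One small inaccuracy: the aside ``since this graph is bipartite, no $K_5$-subdivision is possible'' is false as a general statement (subdividing every edge of $K_5$ once yields a bipartite graph containing a $K_5$-subdivision). It is harmless here, since exhibiting a $K_{3,3}$-subdivision suffices for non-planarity on its own, but the implication should not be relied upon.
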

\begin{proof}
\begin{itemize}
\item[(a)] Suppose that $P_n = v_1\dots v_n$ ($n \geq 3$). 
For each node $\{v_i, v_j\} \in V(\mathsf{TS}_2(P_n))$ ($1 \leq i, j \leq n$ and $|i - j| \geq 2$), its neighbors form a non-empty subset of $\big\{ \{v_{i-1}, v_j\}, \{v_{i+1}, v_j\},\allowbreak \{v_i, v_{j-1}\}, \{v_i, v_{j+1}\} \big\}$. 
Therefore, $\mathsf{TS}_2(P_n)$ can be embedded into a $n \times n$ planar grid graph, which means it is also planar.

\item[(b)] In \figurename~\ref{fig:TS3P8}, we show a planar embedding of $\mathsf{TS}_3(P_8)$.	
In \figurename~\ref{fig:K33-minor-TS3P9} we show a subdivision of $K_{3,3}$ that is contained in $\mathsf{TS}_3(P_9)$.

\item[(c)] $\mathsf{TS}_1(P_8) \simeq P_8$ and it can readily be verified that $\mathsf{TS}_4(P_8) \simeq P_5$. 
From~(a), $\mathsf{TS}_2(P_8)$ is planar. 
From~(b), $\mathsf{TS}_3(P_8)$ is planar and $\mathsf{TS}_3(P_9)$ is not.
\end{itemize}

\begin{figure}[!ht]
\centering
	\begin{tikzpicture}[scale=0.65, every node/.style={draw, thick, circle, transform shape}]
		\foreach \i/\x/\y/\l in {0/31/5/135, 1/31/1/136, 2/20/18/137, 3/20/17/138, 4/14/1/146, 5/28/4/147, 6/20/16/148, 7/25/7/157, 8/20/15/158, 9/20/14/168, 10/17/3/246, 11/17/4/247, 12/14/10/248, 13/17/7/257, 14/16/10/258, 15/20/13/268, 16/20/8/357, 17/18/10/358, 18/19/11/368, 19/21/11/468}
		{
			\node (\i) at (\x, \y) {$\l$};
		}
		\foreach \i/\j in {0/1, 1/2, 1/4, 2/3, 2/5, 3/6, 4/5, 4/10, 5/6, 5/7, 5/11, 6/8, 
			6/12, 7/8, 7/13, 8/9, 8/14, 9/15, 10/11, 11/12, 11/13, 12/14, 13/14, 13/16, 14/15, 14/17, 15/18, 16/17, 17/18, 18/19}
		{
			\draw [thick] (\i) -- (\j);
		}
	\end{tikzpicture}
\caption{A planar drawing of $\mathsf{TS}_3(P_8)$. Each number of the form $abc$ inside a node represents an independent set $\{v_a, v_b, v_c\}$ of $P_8 = v_1\dots v_8$.}
\label{fig:TS3P8}
\end{figure}
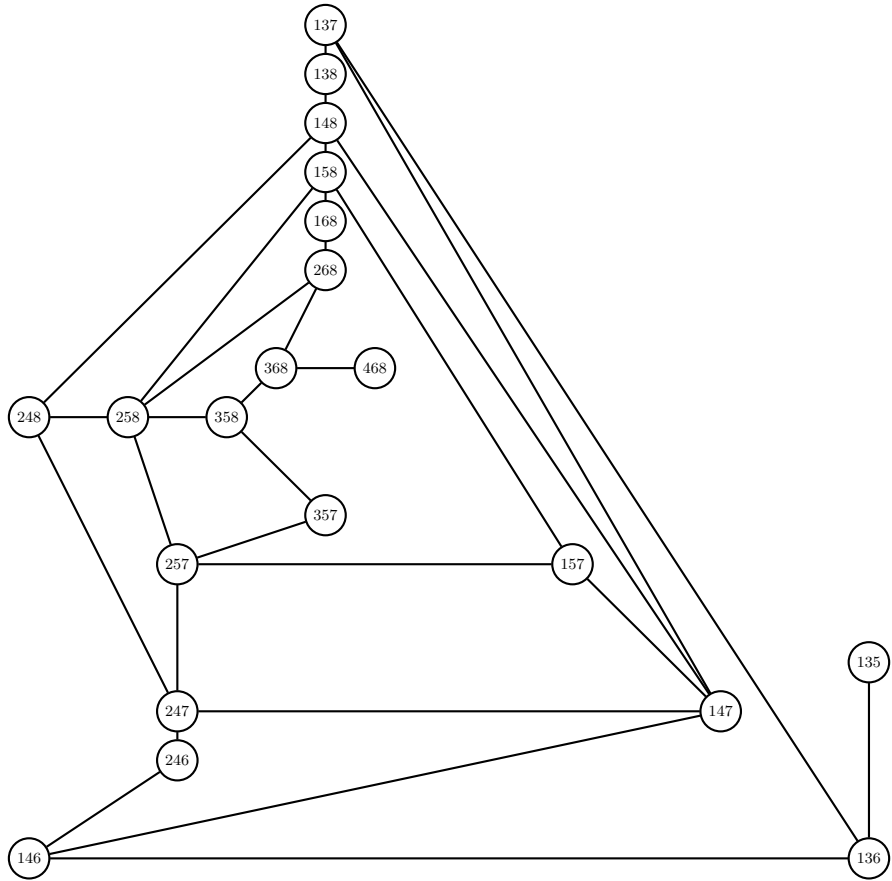

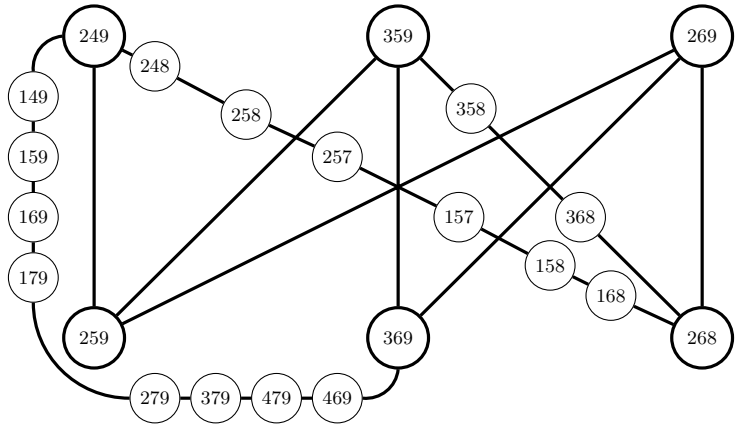
\begin{figure}[!ht]
\centering
	\begin{tikzpicture}[scale=0.8, every node/.style={draw, circle, transform shape}]
		\foreach \x/\y/\l in {0/0/259, 5/0/369, 10/0/268, 0/5/249, 5/5/359, 10/5/269}
		{
			\node [very thick, minimum width=1cm] (\l) at (\x, \y) {$\l$};
		}
		\foreach \x/\y/\l in {6.2/3.8/358, 8/2/368, 1/4.5/248, 2.5/3.7/258, 4/3/257, 6/2/157, 7.5/1.2/158, 8.5/0.7/168, -1/4/149, -1/3/159, -1/2/169, -1/1/179, 1/-1/279, 2/-1/379, 3/-1/479, 4/-1/469}
		{
			\node (\l) at (\x, \y) {$\l$};
		}
		\draw [very thick] (249) -- (259) (259) -- (359) (259) -- (269) (359) -- (369) (369) -- (269) (269) -- (268);
		\draw [very thick] (359) -- (358) -- (368) -- (268);
		\draw [very thick] (249) -- (248) -- (258) -- (257) -- (157) -- (158) -- (168) -- (268);
		\draw [very thick] (249) edge[bend right=45] (149) (149) -- (159) -- (169) -- (179) (179) edge[bend right=45] (279) (279) -- (379) -- (479) -- (469) (469) edge[bend right=45] (369) ;
	\end{tikzpicture}
\caption{A subdivision of $K_{3,3}$ that is contained in $\mathsf{TS}_3(P_9)$. Each number of the form $abc$ inside a node represents an independent set $\{v_a, v_b, v_c\}$ of $P_9 = v_1\dots v_9$.}
\label{fig:K33-minor-TS3P9}
\end{figure}
\qed\end{proof}

\begin{proposition}\label{prop:trees-planarity}
$\mathsf{TS}(T)$ is planar for all trees $T$ having at most seven vertices and non-planar otherwise.
\end{proposition}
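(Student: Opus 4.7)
The first observation to use is that $\mathsf{TS}(T)$ splits as a disjoint union $\bigsqcup_{k=1}^{\alpha(T)}\mathsf{TS}_k(T)$, because two independent sets of different sizes can never be $\mathsf{TS}$-adjacent (the definition forces $|I|=|J|$). Consequently $\mathsf{TS}(T)$ is planar iff every slice $\mathsf{TS}_k(T)$ is planar, and the problem reduces to checking fixed-size slices. For the forward direction ($n\le 7$), there are only $25$ trees on at most seven vertices up to isomorphism, each with $\alpha(T)\le 6$, so the full collection of slices is small. The plan is to verify planarity of every slice by an explicit drawing in the style of \figurename~\ref{fig:TS3P8}. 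One can shorten the bookkeeping by exploiting Proposition~\ref{prop:induced}: if $T'$ is induced in $T$, then $\mathsf{TS}_k(T')$ is induced in $\mathsf{TS}_k(T)$, so starting from the smallest trees and adding leaves propagates planar embeddings outward along the tree-growth order.

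For the converse ($n\ge 8$), the task is to exhibit \emph{some} tree on $n$ vertices whose $\mathsf{TS}$-graph is non-planar, which I would split by $n$. When $n\ge 9$ the path $P_n$ itself works: Proposition~\ref{prop:paths-planarity}(c) already gives $\mathsf{TS}(P_n)$ non-planar for $n\ge 9$, and this is inherited by $\mathsf{TS}(T)$ for any tree $T$ containing $P_n$ as an induced subgraph (which in a tree is automatic for a subpath of the same length). The interesting case is $n=8$, where $\mathsf{TS}(P_8)$ is still planar, so the witness must be a non-path tree; natural candidates include the spider $S_{3,3,1}$ or the caterpillar obtained from $P_6$ by attaching two well-placed pendant leaves near the center. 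For a chosen $T_0$ on eight vertices, the plan is to produce an explicit $K_{3,3}$-subdivision inside $\mathsf{TS}(T_0)$ modelled on \figurename~\ref{fig:K33-minor-TS3P9}, thereby certifying non-planarity via Kuratowski's theorem.

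The main obstacle is pinpointing the eight-vertex witness $T_0$ and its Kuratowski subgraph. Not every tree on eight vertices gives a non-planar $\mathsf{TS}$-graph: for instance $K_{1,7}$ yields a planar $\mathsf{TS}$-graph because every size-$\ge 2$ independent set is isolated (sliding any leaf would push the center into a forbidden neighborhood). So $T_0$ must have enough ``reconfiguration freedom'' around a branching vertex that shuffling tokens between branches produces six internally disjoint $\mathsf{TS}$-paths linking two triples of branch points. Identifying a suitable $T_0$ and drawing the subdivision explicitly is where detailed case analysis, or a short computer search over the $23$ trees on $8$ vertices, is most likely to be needed; once done, this single example extends to all $n\ge 8$ by combining Proposition~\ref{prop:induced} with the $P_n$ argument above.
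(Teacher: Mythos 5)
Your overall route is essentially the paper's: the paper's entire proof is a computer search (all eleven $7$-vertex trees give planar $\mathsf{TS}(T)$; exactly seven of the twenty-three $8$-vertex trees give non-planar $\mathsf{TS}(T)$), and your plan likewise comes down to an exhaustive check of finitely many small trees, slice by slice. Two of your additions are genuine improvements in structure: the observation that $\mathsf{TS}(T)$ is the disjoint union of the slices $\mathsf{TS}_k(T)$, so planarity can be tested per slice, and the disposal of all $n\ge 9$ by citing Proposition~\ref{prop:paths-planarity}(c) for $P_n$, which the paper does not spell out. You are also right that the literal universal reading of ``non-planar otherwise'' fails: your $K_{1,7}$ example is correct, and it is consistent with the paper's count of only seven non-planar trees among the twenty-three on eight vertices; the intended content, as the paper itself uses it in Proposition~\ref{prop:cycles-planarity}, is the existential statement you adopt.

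Two caveats. First, your proposed shortcut via Proposition~\ref{prop:induced} runs in the wrong direction: if $T'$ is induced in $T$ then $\mathsf{TS}_k(T')$ is an induced subgraph of $\mathsf{TS}_k(T)$, which transfers non-planarity upward (equivalently, planarity downward), but a planar embedding of the smaller reconfiguration graph gives no control over the additional nodes and edges of the larger one, so planar embeddings do not ``propagate outward'' along tree growth. You must verify each of the $25$ trees directly, or verify the eleven $7$-vertex trees and argue downward for the smaller ones --- which is exactly what the paper's search does, so this flaw is non-fatal but the claimed saving is illusory. Second, the crux of the converse is the $n=8$ witness, and your proposal stops short of producing one: you name candidate trees (the spider, a caterpillar) and defer to ``case analysis or a short computer search over the $23$ trees on $8$ vertices''. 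That deferred step is precisely where the paper's computation does the work (its data shows seven such witnesses exist), so as written your argument is a sound plan that matches the paper's method, but the eight-vertex case is asserted rather than certified.
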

\begin{proof}
A computer search showed that:
\begin{itemize}
\item All eleven trees $T$ on seven vertices satisfy $\mathsf{TS}(T)$ is planar.
\item Among twenty-three trees $T$ on eight vertices, seven of them satisfy $\mathsf{TS}(T)$ is non-planar.
\end{itemize}
See \url{https://hoanganhduc.github.io/graphs/} for a list of the mentioned graphs and their corresponding $\mathsf{TS}$-reconfiguration graphs.
\qed\end{proof}

\begin{proposition}\label{prop:cycles-planarity}
\begin{itemize}
\item[(a)] $\mathsf{TS}(C_n)$ is planar for $n \leq 6$
and non-planar otherwise.
\item[(b)] With respect to the number of vertices, $C_7$ is the unique smallest graph $G$ for which $\mathsf{TS}(G)$ is non-planar.
\item[(c)] With respect to the number of edges, including $C_7$, there are eight smallest graphs $G$ for which $\mathsf{TS}(G)$ is non-planar.
\end{itemize}
\end{proposition}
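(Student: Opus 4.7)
\emph{Part (a).} The plan is to split into small, medium, and large cases. For $n\le 6$ each graph $\mathsf{TS}(C_n)$ has at most $18$ nodes (by summing the standard count $\tfrac{n}{n-k}\binom{n-k}{k}$ over valid $k$), so one can exhibit an explicit planar drawing for each value and verify planarity directly, or check it by computer in the spirit of Proposition~\ref{prop:trees-planarity}. For $n\ge 10$ I would reduce to Proposition~\ref{prop:paths-planarity}(b): any nine consecutive vertices of $C_n$ induce a copy of $P_9$, so by Proposition~\ref{prop:induced} the graph $\mathsf{TS}_3(P_9)$ is an induced subgraph of $\mathsf{TS}_3(C_n)$, which itself is an induced subgraph of $\mathsf{TS}(C_n)$; non-planarity of $\mathsf{TS}_3(P_9)$ then forces non-planarity of $\mathsf{TS}(C_n)$. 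The remaining cases $n\in\{7,8,9\}$ I would handle individually, by producing in each case an explicit subdivision of $K_{3,3}$ inside $\mathsf{TS}(C_n)$ in the style of \figurename~\ref{fig:K33-minor-TS3P9}; the natural place to look is $\mathsf{TS}_3(C_n)$ for $n=7,8$ and $\mathsf{TS}_3(C_9)$ or $\mathsf{TS}_4(C_9)$ for $n=9$.

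\emph{Parts (b) and (c).} Both assertions are minimality statements ranging over \emph{all} graphs, so I would settle them by computer search, following the template of Proposition~\ref{prop:trees-planarity}. For (b), enumerate every graph on at most seven vertices, build $\mathsf{TS}(G)$ for each, and test planarity; combined with part (a), the enumeration verifies that $C_7$ is the unique minimizer by vertex count. For (c), enumerate graphs in increasing order of $|E(G)|$ up to $|E(C_7)|=7$ and test each. Finiteness of this search is guaranteed by the observation that adding an isolated vertex $v$ to $G$ produces $\mathsf{TS}(G+v) \simeq \mathsf{TS}(G)\sqcup\mathsf{TS}(G)$ (two disjoint copies, since no slide can involve the isolated $v$), so an edge-minimal non-planar example has no isolated vertex and hence at most $2|E(G)|\le 14$ vertices; this gives a concrete a priori enumeration bound.

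\emph{Main obstacle.} The delicate step is part (a) for $n\in\{7,8,9\}$: the induced-subgraph machinery of Proposition~\ref{prop:induced} does not chain cycles together (neither $C_{n-1}$ nor even, in the critical case $n=9$, $P_9$ is induced in $C_n$, because of the closing edge), so these three values cannot be deduced from the path result and must be verified by producing explicit Kuratowski subgraphs. I expect $n=9$ to be the most labour-intensive, since one loses the endpoint asymmetry that made the $P_9$ construction convenient; my hope is to first find, via symmetry-based search, a $K_{3,3}$-subdivision in $\mathsf{TS}_3(C_9)$ and then adapt the same combinatorial template to $n=7,8$ with minor modifications.
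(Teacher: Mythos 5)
Your overall architecture is close to the paper's: parts (b), (c) and the planarity of the $n\le 6$ cases are settled there by computer search, and the only hand-checkable certificate the paper gives is a $K_{3,3}$-subdivision located in $\mathsf{TS}_2(C_7)$ (\figurename~\ref{fig:K33-minor-TS2C7}). Your reduction for $n\ge 10$ via an induced $P_9$, Proposition~\ref{prop:induced} and Proposition~\ref{prop:paths-planarity}(b) is correct and is in fact more explicit than anything the paper writes down for large $n$. The genuine gap is in the middle cases $n\in\{7,8,9\}$: since $\mathsf{TS}(C_n)$ is the disjoint union of the layers $\mathsf{TS}_k(C_n)$, a Kuratowski subgraph must live inside one fixed layer, and the layers you point to are the wrong ones. $\mathsf{TS}_3(C_7)$ is just a $7$-cycle (the seven maximum stable sets of $C_7$ have gap pattern $\{2,2,3\}$ and each has exactly two neighbours), and $\mathsf{TS}_4(C_9)\simeq C_9$ for the same reason, so neither contains any $K_{3,3}$-subdivision. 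Even your $n=8$ choice fails: the sixteen nodes of $\mathsf{TS}_3(C_8)$ consist of the eight sets with gap pattern $\{2,3,3\}$, which form an $8$-cycle, and the eight sets with gap pattern $\{2,2,4\}$, each of degree two subdividing a chord between two cycle-vertices at distance two; hence $\mathsf{TS}_3(C_8)$ is a subdivision of the square of $C_8$, which is planar. Consequently for $n=7$ and $n=8$ the certificate must be sought in the two-token layer $\mathsf{TS}_2(C_n)$ --- for $n=7$ exactly where the paper finds it --- and for $n=9$ in $\mathsf{TS}_2(C_9)$ or $\mathsf{TS}_3(C_9)$, but certainly not $\mathsf{TS}_4(C_9)$. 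So your plan of first finding a template in $\mathsf{TS}_3(C_9)$ and ``adapting it down'' to $n=7,8$ cannot work as stated; more fundamentally, the proposal gives no reason why the chosen layers should be non-planar at all, and identifying the right layer is the real content of these cases.

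Two smaller points on (b) and (c). Your enumeration plan for (b) mirrors the paper's, but taken literally (``every graph on at most seven vertices'') it would produce $K_5$ and $K_{3,3}$ as smaller examples, because $\mathsf{TS}_1(G)\simeq G$ makes $\mathsf{TS}(G)$ non-planar whenever $G$ is; the paper's search ranges over connected planar graphs, which is the intended reading, so restrict your enumeration to planar $G$. For (c), the isolated-vertex reduction is right in substance, except that $\mathsf{TS}(G+v)$ is two copies of $\mathsf{TS}(G)$ together with the isolated node $\{v\}$; this does not affect planarity or your bound $|V(G)|\le 2|E(G)|$. With the layer correction above --- explicit $K_{3,3}$-subdivisions in $\mathsf{TS}_2(C_7)$, $\mathsf{TS}_2(C_8)$, and one of $\mathsf{TS}_2(C_9)$ or $\mathsf{TS}_3(C_9)$ --- your proof of (a) would go through.
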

\begin{proof}
A computer search showed that
\begin{itemize}
\item[(i)] For each of the $99$ connected planar graphs $G$ on six vertices, $\mathsf{TS}(G)$ is planar.
\item[(ii)] There are seven trees $T$ on eight vertices whose $\mathsf{TS}(T)$ is non-planar. (Proposition~\ref{prop:trees-planarity}.)
\item[(iii)] $\mathsf{TS}_2(C_7)$ is non-planar. (\figurename~\ref{fig:K33-minor-TS2C7})
\end{itemize}
See \url{https://hoanganhduc.github.io/graphs/} for a list of the mentioned graphs and their corresponding $\mathsf{TS}$-reconfiguration graphs.

\begin{figure}[!ht]
\centering
\begin{tikzpicture}[scale=0.8, every node/.style={draw, circle, transform shape}]
	\foreach \x/\y/\l in {0/0/74, 5/0/52, 10/0/62, 0/5/73, 5/5/63, 10/5/51}
	{
		\node [very thick, minimum width=1cm] (\l) at (\x, \y) {$\l$};
	}
	\foreach \x/\y/\l in {1/4/31, 2/3/41, 4/1/42, 2/4/72, 5/3.5/53, 2/2/64, 4/2/75, 10/2.5/61}
	{
		\node (\l) at (\x, \y) {$\l$};
	}
	
	\draw [very thick] (74) -- (73) (52) -- (51) (62) -- (63);
	\draw [very thick] (73) -- (31) -- (41) -- (42) -- (52);
	\draw [very thick] (73) -- (72) -- (62);
	\draw [very thick] (63) -- (53) -- (52);
	\draw [very thick] (63) -- (64) -- (74);
	\draw [very thick] (51) -- (75) -- (74);
	\draw [very thick] (51) -- (61) -- (62);
\end{tikzpicture}
\caption{A subdivision of $K_{3,3}$ that is contained in $\mathsf{TS}_2(C_7)$. Each number of the form $ab$ inside a node represents an independent set $\{v_a, v_b\}$ of $C_7 = v_1\dots v_7v_1$.}
\label{fig:K33-minor-TS2C7}
\end{figure}
\qed\end{proof}

\begin{corollary}\label{cor:girth-planarity}
\begin{itemize}
\item[(i)] If $\text{girth}(G) \geq 7$, the graph $\mathsf{TS}(G)$ is non-planar.
\item[(ii)] There exists a graph $G$ whose girth is $3$ and $\mathsf{TS}(G)$ is non-planar.
\end{itemize}
\end{corollary}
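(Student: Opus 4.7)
The plan is to reduce both parts to the cycle case in Proposition~\ref{prop:cycles-planarity} by exhibiting an induced cycle of length $\geq 7$. A preliminary observation I would record is that Proposition~\ref{prop:induced} lifts verbatim from $\mathsf{TS}_k$ to $\mathsf{TS}$: if $H$ is an induced subgraph of $G$, every independent set of $H$ is also an independent set of $G$ supported on $V(H)$, and a single sliding move between two such sets uses an edge of $E(G)$ with both endpoints in $V(H)$, hence an edge of $E(H)$; thus $\mathsf{TS}(H)$ is an induced subgraph of $\mathsf{TS}(G)$.

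For (i), let $g = \text{girth}(G)$ with $7 \leq g < \infty$. Any shortest cycle of $G$ is necessarily induced, because a chord would create a strictly shorter cycle and contradict the minimality of $g$. Hence $C_g$ is an induced subgraph of $G$, and by the observation above $\mathsf{TS}(C_g)$ is an induced subgraph of $\mathsf{TS}(G)$. By Proposition~\ref{prop:cycles-planarity}(a), $\mathsf{TS}(C_g)$ is non-planar for every $g \geq 7$, so $\mathsf{TS}(G)$ inherits a Kuratowski subdivision from $\mathsf{TS}(C_g)$ and is non-planar as well.

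For (ii), a single small example suffices. Take $G$ to be the disjoint union $K_3 \cup C_7$, so that $\text{girth}(G) = 3$ witnessed by the triangle, while $C_7$ still sits in $G$ as an induced subgraph. The same reasoning as in (i) then produces $\mathsf{TS}(C_7)$ as an induced subgraph of $\mathsf{TS}(G)$, and non-planarity follows from Proposition~\ref{prop:cycles-planarity}(a) (or directly from Figure~\ref{fig:K33-minor-TS2C7}).

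The argument is essentially routine; the only point of caution, and the main obstacle to a clean statement, is the degenerate case $g = \infty$ in (i). A forest trivially has $\text{girth}(G) \geq 7$ yet can have a planar $\mathsf{TS}$ graph (e.g., trees on at most seven vertices, by Proposition~\ref{prop:trees-planarity}), so the hypothesis in (i) should be read as requiring finite girth at least $7$, equivalently, at least one cycle in $G$ of length $\geq 7$.
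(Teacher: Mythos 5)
Your part (i) is essentially the paper's own argument: take a shortest cycle $C$ (necessarily chordless), observe that the independent sets supported on $V(C)$ induce a copy of $\mathsf{TS}(C)$ inside $\mathsf{TS}(G)$, and invoke Proposition~\ref{prop:cycles-planarity}; your explicit remark that Proposition~\ref{prop:induced} lifts from $\mathsf{TS}_k$ to $\mathsf{TS}$ just makes precise what the paper uses implicitly, and your caveat about $\text{girth}(G)=\infty$ is a legitimate reading of the hypothesis that matches the paper's proof, which silently assumes a cycle exists. For part (ii) you take a genuinely different witness: the disjoint union $K_3 \cup C_7$, with non-planarity again imported from $\mathsf{TS}(C_7)$ via Proposition~\ref{prop:cycles-planarity} (or the explicit $K_{3,3}$ subdivision in $\mathsf{TS}_2(C_7)$). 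The paper instead joins a universal vertex $v$ to every vertex of $P_n$ with $n \geq 9$, which creates triangles (girth $3$) while keeping $P_n$ induced, and then appeals to the non-planarity of $\mathsf{TS}_3(P_9)$ from Proposition~\ref{prop:paths-planarity}. Both examples are valid since the statement does not demand connectivity; your construction is simpler and reuses the same cycle machinery as part (i), while the paper's gives a connected example and illustrates that attaching a dominating vertex leaves the relevant induced piece of the reconfiguration graph intact — a slightly stronger and more instructive witness, but not logically necessary for the claim as stated.
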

\begin{proof}
\begin{itemize}
\item[(i)] Let $C$ be a cycle in $G$ whose length is $\text{girth}(G) \geq 7$.
Note that $C$ does not contain any chord; otherwise, we can find a cycle smaller than $C$, which contradicts our assumption.
Now, consider the subgraph $\mathcal{H}$ of $\mathsf{TS}(G)$ induced by only independent sets in $C$.
It follows from Proposition~\ref{prop:cycles-planarity} that $\mathcal{H}$ is non-planar, and so is $\mathsf{TS}(G)$.

\item[(ii)] Take $G$ as the graph obtained by connecting every vertex of $P_n$ to some vertex $v$ not in $P_n$. The subgraph of $\mathsf{TS}(G)$ induced by independent sets in $P_n$ is non-planar when $n \geq 9$ (see Proposition~\ref{prop:paths-planarity}).
\end{itemize}
\qed\end{proof}

\subsection{Eulerianity}
\label{sec:Eulerianity}

A graph $G$ is \textit{Eulerian} if it has an \textit{Eulerian cycle}---a cycle that visits each edge of $G$ exactly once. 
It is well-known that a graph $G$ is Eulerian if and only if it is connected and every vertex has even degree.

\begin{proposition}\label{prop:cycles-Eulerian}
The graph $\mathsf{TS}_k(C_n)$ is Eulerian, for \RA{$1 \leq k < n/2$}.
\end{proposition}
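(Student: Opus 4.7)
The plan is to use the classical characterization that a (simple, connected) graph is Eulerian if and only if it is connected and every vertex has even degree, and to verify both properties for $\mathsf{TS}_k(C_n)$ under the hypothesis $1 \leq k < n/2$. The case $k=1$ is immediate since $\mathsf{TS}_1(C_n) \simeq C_n$ is itself a cycle; so the substantive content is the case $2 \leq k < n/2$.

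For connectivity, I would appeal to the known algorithmic results referenced in Appendix~\ref{apd:known-properties} (which imply that $\mathsf{TS}_k(C_n)$ is connected precisely when $k < n/2$), or equivalently give a short direct normalization argument: label $V(C_n) = \{v_0,\dots,v_{n-1}\}$ with indices modulo $n$, and show that any size-$k$ independent set can be transformed by $\mathsf{TS}$-moves into the canonical independent set $\{v_1, v_3, \dots, v_{2k-1}\}$. Since $n-k > k$, the gaps between consecutive tokens along the cycle sum to strictly more than $k$, so by pigeonhole at least one gap has length $\geq 2$, guaranteeing that some token can always be slid; a standard ``push tokens clockwise into the slack'' argument then yields the canonical form.

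For the parity of degrees, I would parameterize each size-$k$ independent set $I = \{v_{p_1}, \dots, v_{p_k}\}$ (indices sorted cyclically) by its cyclic gap sequence $(g_1, \dots, g_k)$, where $g_j$ counts the empty vertices strictly between the $j$-th and $(j{+}1)$-th token. Independence of $I$ gives $g_j \geq 1$, and $\sum_{j} g_j = n-k$. The key observation is that the slide $v_{p_j} \to v_{p_j+1}$ produces an independent set if and only if both $v_{p_j+1}$ and $v_{p_j+2}$ are unoccupied, which is equivalent to $g_j \geq 2$; symmetrically, the slide $v_{p_{j+1}} \to v_{p_{j+1}-1}$ is valid if and only if $g_j \geq 2$. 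These are the only two moves that alter the occupancy inside gap $j$, and they are distinct. Hence each gap of size $\geq 2$ contributes exactly $2$ to $\deg_{\mathsf{TS}_k(C_n)}(I)$, while gaps of size $1$ contribute nothing, giving
\[
\deg_{\mathsf{TS}_k(C_n)}(I) \;=\; 2 \cdot |\{\, j : g_j \geq 2 \,\}|,
\]
which is even.

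The main obstacle is the connectivity claim, since the degree computation is a clean combinatorial identity once the gap parameterization is in place. However, connectivity is either immediate from the cited prior work on the $\mathsf{TS}$-reachability problem on cycles, or is handled by the one-paragraph pigeonhole/normalization argument above, both of which exploit the inequality $n - k > k$ in an essential way. Combining the two ingredients yields that $\mathsf{TS}_k(C_n)$ is connected and $2$-regular-mod-$2$, hence Eulerian.
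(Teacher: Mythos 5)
Your proposal is correct and follows essentially the same route as the paper: verify the Euler criterion by normalizing any size-$k$ independent set to a canonical one for connectivity, and show every node has even degree by a local count of valid slides. The only (cosmetic) difference is the bookkeeping for the degree parity --- you charge the two possible slides to each gap of length at least $2$, giving $\deg(I)=2\,\vert\{j : g_j\ge 2\}\vert$, while the paper charges $0$ or $2$ to each maximal block of tokens separated by single gaps; on a cycle with $k<n/2$ these counts coincide, so the arguments are equivalent.
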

\begin{proof}
Suppose that $C_n = w_1w_2\dots w_n$. 
It is not hard to see that $\mathsf{TS}_k(G)$ is connected, since any independent set of size $k$, where $1 \leq k < n/2$, can be reconfigured to the canonical independent set $\{w_1, w_3,\allowbreak \dots, w_{2k+1}\}$.
Let $I$ be any independent set of $C_n$ of size $k$.
We show that $\deg_{\mathsf{TS}(C_n)}(I)$ is even.
Note that only the maximal odd-length paths $P = v_1v_2\dots v_{2i+1}$ in $C_n$ satisfying $\{v_1, v_3, \dots, v_{2i+1}\} \subseteq I$ affect $\deg_{\mathsf{TS}(C_n)}(I)$, and each of such path contributes either $0$ or $2$ to $\deg_{\mathsf{TS}(C_n)}(I)$.
Thus, $\deg_{\mathsf{TS}(C_n)}(I)$ is even.
\qed\end{proof}

\begin{proposition}\label{prop:Eulerian}
\begin{itemize}
\item[(a)] For any Eulerian graph $G$ on $n \geq 4$ vertices, every component of $\mathsf{TS}_2(G)$ is Eulerian.
\item[(b)] There exists an Eulerian graph $G$ where $\mathsf{TS}_k(G)$ is not Eulerian, for any $k \in \{3, \dots, \alpha(G)\}$.
\item[(c)] For any graph $G$, if $\mathsf{TS}(G)$ is Eulerian, so is $G$. 
Moreover, for any $k \geq 2$, one can construct a graph $G$ such that $G$ is not Eulerian but $\mathsf{TS}_k(G)$ is.
\end{itemize}
\end{proposition}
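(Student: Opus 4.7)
For part (a), my plan is to compute the degree of each vertex of $\mathsf{TS}_2(G)$ directly. Fix a size-$2$ independent set $I = \{u, v\}$ of $G$. Every $\mathsf{TS}$-neighbor of $I$ is obtained either by sliding the token at $u$ along an edge to some $u' \in N_G(u)$ with $\{u', v\}$ still independent, or symmetrically at $v$. Because $uv \notin E(G)$ implies $v \notin N_G(u)$, the first count is exactly $|N_G(u) \setminus N_G(v)|$, so inclusion–exclusion yields
\[
\deg_{\mathsf{TS}_2(G)}(\{u,v\}) = |N_G(u) \setminus N_G(v)| + |N_G(v) \setminus N_G(u)| = \deg_G(u) + \deg_G(v) - 2|N_G(u) \cap N_G(v)|.
\]
Since $G$ is Eulerian, both $\deg_G(u)$ and $\deg_G(v)$ are even, hence so is the right-hand side. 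As every component of $\mathsf{TS}_2(G)$ is connected by definition and now has only even degrees, each such component is Eulerian.

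For part (b), I will exhibit an Eulerian graph whose $\mathsf{TS}_k$-graph is disconnected (hence not Eulerian) for every $k$ in the advertised range. Given $k \geq 3$, let $n$ be the smallest even integer with $n \geq k$ and take $G = K_{n,n}$. This graph is $n$-regular, connected, and Eulerian, with $\alpha(G) = n \geq k$. Every edge of $K_{n,n}$ crosses the bipartition, so any size-$k$ independent set lies entirely in one partite class and admits no slide into another independent set. Thus $\mathsf{TS}_k(G)$ is edgeless on either $2$ vertices (when $k = n$) or $2\binom{n}{k} \geq 2$ vertices (when $k < n$); in both cases it is disconnected and not Eulerian. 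The same $G$ simultaneously witnesses the claim for every $k \in \{3, \dots, \alpha(G)\}$.

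For part (c), the first implication rests on the observation that a $\mathsf{TS}$-move preserves the size of an independent set, so $\mathsf{TS}(G)$ decomposes as the vertex-disjoint union $\bigsqcup_{k \geq 1} \mathsf{TS}_k(G)$. Hence $\mathsf{TS}(G)$ can be connected only if exactly one stratum is non-empty, which forces $\alpha(G) = 1$ and therefore $G \simeq K_n$ for some $n \geq 1$; in that case $\mathsf{TS}(G) \simeq \mathsf{TS}_1(G) \simeq G$ and the conclusion is immediate. For the constructive second statement, fix $k \geq 2$ and take $G$ to be the disjoint union of a triangle $K_3$ and $k-1$ isolated vertices. Then $G$ is disconnected and hence not Eulerian. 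Every size-$k$ independent set of $G$ must contain all $k-1$ isolated vertices together with exactly one vertex of the triangle, so $\mathsf{TS}_k(G)$ has three pairwise adjacent vertices (via slides along the triangle's edges), giving $\mathsf{TS}_k(G) \simeq K_3$, which is Eulerian. The most delicate step is the degree computation in (a), where care is needed to ensure the neighbors counted for slides at $u$ and at $v$ are disjoint and correctly reduce to the inclusion–exclusion formula; the constructions in (b) and (c) are then routine to verify.
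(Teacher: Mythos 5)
Your part (a) and the first half of part (c) coincide with the paper's own arguments: the same degree formula $\deg_{\mathsf{TS}_2(G)}(\{u,v\}) = \deg_G(u)+\deg_G(v)-2\vert N_G(u)\cap N_G(v)\vert$ for (a), and the same observation for (c) that $\mathsf{TS}(G)$ can only be connected when all independent sets have the same size, forcing $\alpha(G)=1$ and $\mathsf{TS}(G)\simeq G$.

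Where you diverge is in the two constructions for (b) and the second half of (c), and both lean on degenerate disconnection rather than on a parity obstruction. For (b), your $\mathsf{TS}_k(K_{n,n})$ is edgeless; it is ``not Eulerian'' only because it is disconnected, and every one of its components (isolated vertices) is Eulerian. Since part (a) already concedes that $\mathsf{TS}_2(G)$ may be disconnected and therefore passes to components, your example does not exhibit the phenomenon the proposition is meant to capture --- indeed $\mathsf{TS}_2(K_{n,n})$ is equally edgeless, so under your reading the ``failure'' would already occur at $k=2$. The paper instead takes $G$ to be two odd cycles ($C_3$ and $C_{2\ell+1}$) sharing one vertex and exhibits, for every $3 \le k \le \alpha(G)=\ell+1$, a node of $\mathsf{TS}_k(G)$ of degree $3$; this makes even a component of $\mathsf{TS}_k(G)$ non-Eulerian, which is the genuine contrast with (a). For the second half of (c), your $G = K_3$ plus $k-1$ isolated vertices has every degree even and a closed trail through all of its edges; it fails to be Eulerian only under the convention that disconnection (here, isolated vertices) destroys Eulerianity. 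The paper's primary definition (``a cycle that visits each edge of $G$ exactly once'') would arguably declare this $G$ Eulerian, so your witness is fragile, whereas the paper sidesteps the issue by using graphs with a vertex of degree $1$ (e.g., $K_4$ with a pendant vertex, whose $\mathsf{TS}_2$ is $C_3$, and a similar gadget giving $C_4$ for $k\ge 3$), which are non-Eulerian under any convention. I would strengthen both of your constructions accordingly: for (b) produce an odd-degree node of $\mathsf{TS}_k(G)$, and for (c) choose a non-Eulerian $G$ witnessed by an odd-degree vertex.
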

\begin{proof}
\begin{itemize}
\item[(a)]It suffices to show that all nodes of $\mathsf{TS}_2(G)$ have even degree.
Take any independent set $I = \{v_1, v_2\}$. 
We have $$\deg_{\mathsf{TS}_2(G)}(I) = \deg_G(v_1) + \deg_G(v_2) - 2|N_G(v_1) \cap N_G(v_2)|,$$ which is always even because $G$ is Eulerian.

\item[(b)] %
For $k \geq 3$, take $G$ as the graph containing exactly two cycles: $C_3$ and $C_{2\ell+1} = v_1v_2\dots v_{2\ell+1}$.
Note that $\alpha(G) = \ell + 1$.
Suppose that vertices of $C_{2\ell+1}$ are in counter-clockwise order, $V(C_3) \cap V(C_{2\ell+1}) = \{v_1\}$. 
For $3 \leq k \leq \ell + 1$, let $I = \{w, v_{2\ell+1}, v_2, v_4, \dots, v_{2(k-2)}\}$, where $w \in V(C_3) - v_1$.
In these cases, we all have $\deg_{\mathsf{TS}_k(G)}(I) = 3$, and therefore $\mathsf{TS}_k(G)$ is not Eulerian.

\item[(c)] Clearly, if $\mathsf{TS}(G)$ is Eulerian, it must be connected, and therefore $\mathsf{TS}(G) \simeq \mathsf{TS}_1(G) \simeq G$, which implies $G$ is also Eulerian. 
For $k = 2$, let $G$ be a graph obtained by joining $K_4$ with a single vertex.
Since $G$ has a vertex of degree $1$, it is not Eulerian.
One can verify that $\mathsf{TS}_2(G) \simeq C_3$ and therefore it is Eulerian.
For $k \geq 3$, let $G$ be a graph obtained by joining a vertex of $C_3$ with an endpoint of $P_3$ and with $k-2$ new vertices.
Again, since $G$ has a vertex of degree $1$, it is not Eulerian.
One can verify that $\mathsf{TS}_k(G) \simeq C_4$ and therefore it is Eulerian.
\end{itemize}
\qed\end{proof}

\subsection{Girth}\label{sec:girth}

Recall that the \textit{girth} of a graph $G$ is the smallest size of a cycle in $G$, and is $\infty$ if $G$ is a \textit{forest}, i.e., it has no cycles.

\begin{proposition}\label{prop:paths-girth}

\RA{For every $n \geq 2k+1$, $\text{girth}(\mathsf{TS}_k(P_n))$ is $4$ and $\infty$ otherwise.}
Consequently,
$\text{girth}(\mathsf{TS}(P_n))$ is $4$ for every $n \geq 5$
and $\infty$ otherwise.
\end{proposition}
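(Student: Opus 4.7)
The plan is to establish the lower bound $\text{girth} \geq 4$ from triangle-freeness and the upper bound $\text{girth} \leq 4$ (for $n\geq 2k+1$) by an explicit construction, then handle $n\leq 2k$ by an acyclicity argument. Since $P_n$ is triangle-free, Lemma~\ref{lem:Kn-subgraph} immediately gives that $\mathsf{TS}_k(P_n)$ contains no triangle, ruling out cycles of length~3. For the upper bound, by Proposition~\ref{prop:induced} it suffices to produce a 4-cycle in $\mathsf{TS}_k(P_{2k+1})$, since $P_{2k+1}$ is an induced subgraph of $P_n$ whenever $n\geq 2k+1$. I would use
\[
I_0 \;=\; \{v_1, v_4, v_6, v_8, \dots, v_{2k}\}\subseteq V(P_{2k+1}),
\]
whose size is $k$ and whose consecutive gaps are $3,2,2,\dots,2$, so it is independent. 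The two slides $v_1\leftrightarrow v_2$ and $v_{2k}\leftrightarrow v_{2k+1}$ touch disjoint pairs of vertices (since $k\geq 2$ forces $2k\geq 4>2$), each produces an independent set, and performing both still yields an independent set, giving the 4-cycle
\[
I_0,\quad I_0 - v_1 + v_2,\quad I_0 - v_1 + v_2 - v_{2k} + v_{2k+1},\quad I_0 - v_{2k} + v_{2k+1}.
\]

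For the ``otherwise'' direction $n \leq 2k$, I would show $\mathsf{TS}_k(P_n)$ is acyclic, whence $\text{girth}=\infty$. The cases $n \leq 2k-2$ (no size-$k$ independent set at all) and $n=2k-1$ (unique independent set $\{v_1,v_3,\dots,v_{2k-1}\}$) are trivial. For $n=2k$, I would observe that any size-$k$ independent set $\{v_{i_1}<\cdots<v_{i_k}\}$ in $P_{2k}$ satisfies $i_j\in\{2j-1,2j\}$: the lower bound comes from iterating the gap-$\geq 2$ condition from the left, the upper bound from iterating it from the right. The sequence $i_j - (2j-1)$ is then non-decreasing in $\{0,1\}$, so the invariant $t(I)=\sum_j(i_j-(2j-1))$ takes each value in $\{0,1,\dots,k\}$ exactly once, and every $\mathsf{TS}$-edge changes $t$ by $\pm 1$; thus $\mathsf{TS}_k(P_{2k})\simeq P_{k+1}$, which is a tree.

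The corollary on $\mathsf{TS}(P_n)$ follows because no edge of $\mathsf{TS}(P_n)$ joins independent sets of different sizes, so any cycle of $\mathsf{TS}(P_n)$ must lie inside some $\mathsf{TS}_k(P_n)$. For $n\geq 5$, applying the main statement with $k=2$ (so $2k+1=5\leq n$) gives a 4-cycle, and Lemma~\ref{lem:Kn-subgraph} still forbids triangles, giving $\text{girth}(\mathsf{TS}(P_n))=4$. For $n\leq 4$, each $\mathsf{TS}_k(P_n)$ is small enough to be checked directly as a forest. The main obstacle is the $n=2k$ structural analysis: the two-sided pinning $i_j\in\{2j-1,2j\}$ is the subtle combinatorial fact, after which the monotone-invariant argument is routine.
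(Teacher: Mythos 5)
Your proposal is correct and follows essentially the same route as the paper: rule out triangles (you via Lemma~\ref{lem:Kn-subgraph}, the paper via bipartiteness from Proposition~\ref{prop:s-partite}), exhibit a $C_4$ in $\mathsf{TS}_k(P_{2k+1})$ and lift it to $\mathsf{TS}_k(P_n)$ through Proposition~\ref{prop:induced}, show acyclicity for $n \leq 2k$, and deduce the $\mathsf{TS}(P_n)$ statement from the $k=2$ case. Your explicit $4$-cycle and the $i_j \in \{2j-1,2j\}$ analysis giving $\mathsf{TS}_k(P_{2k}) \simeq P_{k+1}$ merely flesh out steps the paper leaves as ``one can verify.''
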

\RA{
\begin{proof}

From Proposition~\ref{prop:s-partite}, since $\mathsf{TS}(P_n)$ is bipartite, it does not contain any $C_3$, and therefore neither does $\mathsf{TS}_k(P_n)$.
Let $P_{2k+1} = v_1v_2\dots v_{2k+1}$.
One can readily verify that $\mathsf{TS}_k(P_{2k+1})$ has a $C_4$ which contains the size-$k$ independent set $I = \{v_1, v_4, v_7, v_9, \dots, v_{2k+1}\}$.
For $n \geq 2k+1$, since $P_{2k+1}$ is an induced subgraph of $P_n$, Proposition~\ref{prop:induced} implies that $\mathsf{TS}_k(P_{2k+1})$ is also an induced subgraph of $\mathsf{TS}_k(P_n)$, and therefore $\mathsf{TS}_k(P_n)$ also contains a $C_4$.
Thus, $\text{girth}(\mathsf{TS}_k(P_n))$ is $4$ for every $n \geq 2k+1$.
It remains to show that when $n \leq 2k$, the graph $\mathsf{TS}_k(P_n)$ has no cycles.
To see this, note that we also have $k \leq \alpha(P_n) = \lceil n/2 \rceil$.
Therefore, either $k = n/2$ or $k = (n+1)/2$ and in both cases one can verify that $\mathsf{TS}_k(P_n)$ has no cycles.
Consequently, since $\mathsf{TS}_2(P_n)$ is an induced subgraph of $\mathsf{TS}(P_n)$, we have $\text{girth}(\mathsf{TS}(P_n))$ is $4$ for every $n \geq 5 = 2\times 2 + 1$
and $\infty$ otherwise.
\qed\end{proof}
}

\begin{proposition}\label{prop:cycles-girth}
For $1 \leq k < n/2$, $\text{girth}(\mathsf{TS}_k(C_n)) = n$. If $k = n/2$, we have  $\text{girth}(\mathsf{TS}_k(C_n)) = \infty$.
\RA{Consequently, $\text{girth}(\mathsf{TS}(C_n)) = n$.}
\end{proposition}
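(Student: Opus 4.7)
The plan is to handle the two cases of Proposition~\ref{prop:cycles-girth} separately. For $k=n/2$ (necessarily $n$ even), I observe that $C_n$ has exactly two independent sets of size $k$, namely the two color classes $\{w_1,w_3,\dots,w_{n-1}\}$ and $\{w_2,w_4,\dots,w_n\}$, which differ in all $k$ positions and are therefore non-adjacent in $\mathsf{TS}_k(C_n)$. Hence $\mathsf{TS}_k(C_n)$ reduces to two isolated vertices, has no cycles, and has girth $\infty$.

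For $1\le k<n/2$ I would establish $\text{girth}(\mathsf{TS}_k(C_n))=n$ via matching bounds. For the upper bound I would exhibit an explicit $n$-cycle obtained by a coordinated rotation of the tokens around $C_n$: starting from a suitably spaced independent set such as $I_0=\{w_1,w_3,\dots,w_{2k-1}\}$, I perform a sequence of $n$ single-token slides that collectively advances each token around the cycle once and returns to $I_0$, yielding an $n$-cycle in $\mathsf{TS}_k(C_n)$. Two clean special cases motivate this template: $k=1$, where $\mathsf{TS}_1(C_n)\simeq C_n$ directly gives an $n$-cycle, and $n=2k+1$, where a symmetry argument shows that each of the $n$ maximum independent sets has exactly two neighbors in $\mathsf{TS}_k(C_n)$, and hence $\mathsf{TS}_k(C_n)\simeq C_n$.

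For the lower bound I would use a winding-number argument. Label the tokens $t_1,\dots,t_k$ according to their initial cyclic ordering along $C_n$. Because no-crossing is enforced on a cycle under token sliding, this cyclic ordering is an invariant of the dynamics. Lifting to the universal cover $\mathbb{Z}$ of $C_n$, every closed walk in $\mathsf{TS}_k(C_n)$ corresponds to $\pm 1$ moves of strictly ordered positions $\tilde p_1<\dots<\tilde p_k$ satisfying $\{\tilde p_i^{\text{end}}\bmod n\}=\{\tilde p_i^{\text{start}}\bmod n\}$ as multisets. Consequently $\sum_i(\tilde p_i^{\text{end}}-\tilde p_i^{\text{start}})=mn$ for some integer $m$ (the collective winding number), and the walk length is at least $|mn|$; so any walk with $m\neq 0$ has length $\ge n$.

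The main obstacle will be the $m=0$ case, in which every token returns to its original position. Here I would argue that any simple cycle with zero winding must make a nontrivial excursion which, combined with the independence constraint inherited from $C_n$ and the absence of chords in $C_n$, forces length at least $n$; bookkeeping the token excursions against the no-crossing constraint is the crux of this analysis. Finally, the corollary $\text{girth}(\mathsf{TS}(C_n))=n$ follows since $\mathsf{TS}_1(C_n)\simeq C_n$ is an induced subgraph of $\mathsf{TS}(C_n)$, giving the upper bound, while the lower bound transfers from the winding analysis applied at each level $k$.
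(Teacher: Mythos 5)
Your handling of $k=n/2$, your rotation upper bound, and your winding-number argument for closed walks with nonzero collective winding $m$ (length at least $|m|n\ge n$) are all sound, and the special cases $k=1$ and $n=2k+1$ are correct. The genuine gap is exactly the one you flag yourself: the zero-winding case. Unfortunately this is not a bookkeeping issue that can be pushed through---the bound you would need is false there. For every $k\ge 2$ and $n\ge 2k+2$, $\mathsf{TS}_k(C_n)$ contains a zero-winding $4$-cycle in which two tokens simply shuffle back and forth along the edges $v_1v_2$ and $v_4v_5$: in $\mathsf{TS}_2(C_6)$ take $\{v_1,v_4\}$, $\{v_2,v_4\}$, $\{v_2,v_5\}$, $\{v_1,v_5\}$, and for larger $k$ park the remaining tokens at $v_7,v_9,\dots,v_{2k+1}$ (possible since $n\ge 2k+2$); every set involved is independent and every move is a legal slide. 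This is precisely the ``back-and-forth'' square of \figurename~\ref{fig:Kmn-isnt-a-TSk-graph}(b), and it is also forced by Propositions~\ref{prop:induced} and~\ref{prop:paths-girth}, since $P_{n-1}$ is an induced subgraph of $C_n$ and $\mathsf{TS}_k(P_{n-1})$ already has girth $4$ once $n-1\ge 2k+1$. Since $\mathsf{TS}_k(C_n)$ is triangle-free for $n\ge 4$ by Lemma~\ref{lem:Kn-subgraph}, the girth in this range is exactly $4$; the value $n$ is attained only when $k=1$ or $n=2k+1$ (and $\infty$ when $n=2k$). The same $4$-cycles sit inside $\mathsf{TS}(C_n)$, which is the disjoint union of the graphs $\mathsf{TS}_k(C_n)$, so the concluding claim about $\mathsf{TS}(C_n)$ also holds only for $n\le 5$.

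For comparison, the paper's proof is essentially your $m\neq 0$ argument with the $m=0$ case silently assumed away: it asserts that, because tokens cannot jump over one another, any cycle through $I$ must be a full rotation in which each token ends on the next token's starting vertex. That assertion is exactly what the $4$-cycles above violate. So in isolating $m=0$ as the crux you have located the real weak point, but the resolution is not a finer analysis of excursions; it is that the statement needs the hypothesis $k=1$ or $n=2k+1$ (equivalently, a corrected girth value, $4$, for $k\ge 2$ and $n\ge 2k+2$), and no completion of your plan as written can establish the lower bound $n$ in that range.
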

\begin{proof}
Suppose that $I = \{v_1, v_2, \dots, v_k\}$ is an independent set of $C_n$, where $1 \leq k < n/2$ and $\{v_i\}_{i=1,\dots ,k}$ are ordered in counter-clockwise direction. 
Since no token can ``jump'' above any other tokens, any cycle $\mathcal{C}$ in $\mathsf{TS}(C_n)$ containing $I$ must be formed by moving tokens in counter-clockwise direction such that finally, the token originally placed on $v_i$ is placed on $v_{i+1}$ ($i = 1, 2, \dots, k-1$), and the token originally placed on $v_k$ is placed on $v_1$.
One can achieve this plan with exactly $n$ token-slides, provided that $1 \leq k < n/2$.
If $k = n/2$, the graph $\mathsf{TS}_k(C_n)$ contains exactly two isolated vertices.
Then, its girth is $\infty$.	
\RA{Consequently, since $\mathsf{TS}_k(C_n)$ is a subgraph of $\mathsf{TS}(C_n)$ for $1 \leq k \leq n/2$, it follows that $\text{girth}(\mathsf{TS}(C_n)) = n$.}

\qed\end{proof}

\subsection{Clique of given size}
\label{sec:clique-size}

\begin{proposition}\label{prop:cliques-in-graphs}
\begin{itemize}
\item[(a)] $G$ has a $K_s$ if and only if $\mathsf{TS}(G)$ has a $K_s$ ($s \geq 3$).
\item[(b)] There exists a split graph $G$ such that $G$ has a $K_s$ and $\mathsf{TS}_k(G)$ ($k \geq 2$) does not.
\end{itemize}
\end{proposition}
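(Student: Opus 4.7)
The plan is to split into part (a), which is an equivalence, and part (b), which is a construction.

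For part (a), I would handle the two directions separately. In the forward direction, if $\{v_1, \dots, v_s\}$ is a clique in $G$, then the singleton sets $\{v_1\}, \dots, \{v_s\}$ are independent sets of $G$ (hence vertices of $\mathsf{TS}(G)$), and by definition of $\mathsf{TS}$-adjacency they are pairwise adjacent exactly because the corresponding $v_i$ are pairwise adjacent in $G$; this directly produces a $K_s$ in $\mathsf{TS}(G)$. In the backward direction, I would first observe that any edge of $\mathsf{TS}(G)$ joins independent sets of the same cardinality, so any $K_s$ in $\mathsf{TS}(G)$ is contained in some $\mathsf{TS}_k(G)$. If $k = 1$ we are done since $G \simeq \mathsf{TS}_1(G)$; if $k \geq 2$ we invoke Lemma~\ref{lem:Kn-subgraph} to deduce that $G$ itself contains a $K_s$.

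For part (b), a simple witness suffices. Taking $G = K_s$, viewed as a split graph with $K = V(G)$ and $S = \emptyset$, gives a split graph that trivially has a $K_s$, while $\alpha(G) = 1 < k$ implies $\mathsf{TS}_k(G)$ has no vertices and hence no $K_s$ for any $k \geq 2$. For a more interesting witness in which $\mathsf{TS}_k(G)$ is nonempty, I would take $G$ obtained from $K_s = v_1 v_2 \dots v_s$ by attaching $k-1$ pendant vertices to each $v_i$, with $K = V(K_s)$ and $S$ the set of pendants. Since pendants have degree one, the unique $K_s$ (for $s \geq 3$) in $G$ is $V(K_s)$, and the argument behind Lemma~\ref{lem:Kn-subgraph} forces every $K_s$ in $\mathsf{TS}_k(G)$ to be labeled by sets of the form $A + v_i$ ($1 \leq i \leq s$), where $A$ is a size-$(k-1)$ independent set whose members are not adjacent to any $v_j$. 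But every pendant in $S$ is adjacent to some $v_j$, so no such $A$ exists whenever $k \geq 2$, and $\mathsf{TS}_k(G)$ contains no $K_s$.

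The main obstacle is the backward direction of (a) for $s > 3$: Lemma~\ref{lem:Kn-subgraph} is stated for $n \geq 3$ but the proof in the paper explicitly handles only $n = 3$. I would extend it by iterating the case analysis: given $I_1, \dots, I_s$ forming a $K_s$ in $\mathsf{TS}_k(G)$, the $n = 3$ argument applied to $I_1, I_2, I_3$ identifies a common size-$(k-1)$ core $A$ and distinct vertices $z_1, z_2, z_3$ forming a triangle in $G$; applying the same case analysis to each subsequent triple $I_1, I_2, I_j$ shows that $I_j = A + z_j$ for a vertex $z_j$ adjacent to both $z_1$ and $z_2$, and then pairwise adjacency of $I_i, I_j$ for $i, j \geq 3$ forces $z_i z_j \in E(G)$, giving the required $K_s = \{z_1, \dots, z_s\}$ in $G$.
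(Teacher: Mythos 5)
Your proof is correct. Part (a) follows the paper's route exactly: the only-if direction via the singleton cliques in $\mathsf{TS}_1(G)\subseteq\mathsf{TS}(G)$, and the if direction via Lemma~\ref{lem:Kn-subgraph}; you are in fact more careful than the paper on two points it leaves implicit, namely that every edge of $\mathsf{TS}(G)$ joins independent sets of equal size (so a clique of $\mathsf{TS}(G)$ sits inside a single $\mathsf{TS}_k(G)$), and that the lemma's written proof only treats $n=3$ --- your iterated case analysis extracting a common $(k-1)$-core $A$ and pairwise adjacent vertices $z_1,\dots,z_s$ is a sound completion, and both (a) and (b) genuinely need this general-$s$ form. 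For part (b) your witness differs from the paper's: the paper takes a split graph with $\vert K\vert=s$ in which every vertex of $S$ is adjacent to exactly one fixed vertex $v\in K$, so a core $A\subseteq S$ exists but the token traversing $K$ can never occupy $v$; you instead attach pendants so that \emph{every} vertex outside the clique is adjacent to some clique vertex, which kills the existence of any admissible core $A$ outright. Both mechanisms are valid, and yours is arguably the more immediate contradiction; your remark that $s\geq 3$ is needed so that $V(K_s)$ is the unique $K_s$ matches the implicit assumption in the paper's argument as well. One small caveat: your first witness $G=K_s$ with $S=\emptyset$ does satisfy the literal statement, but only vacuously, since $\mathsf{TS}_k(K_s)$ is the empty graph for $k\geq 2$; the nondegenerate pendant construction is the one that captures the intended content, so keep that as the main example.
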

\begin{proof}
\begin{itemize}
\item[(a)] $G \simeq \mathsf{TS}_1(G)$ implies the only-if direction.
Lemma~\ref{lem:Kn-subgraph} implies the if direction.

\item[(b)] Take a split graph $G = (K \cup S, E)_{\text{$K$-max}}$ such that $\vert K \vert = s$ and there exists $v \in K$ with $\bigcup_{w \in S}N_G(w) = \{v\}$.
Suppose to the contrary that $\mathsf{TS}_k(G)$ ($k \geq 2$) has a $K_s$.
Thus, from Lemma~\ref{lem:Kn-subgraph}, there must be a token $t$ which traverses through all vertices in $K$.
Since $k \geq 2$, there is always at least one token $t^\prime \neq t$ in $S$, which implies that $t$ can never be slid to $v$, a contradiction.
Thus, $\mathsf{TS}_k(G)$ ($k \geq 2$) has no $K_s$.
\end{itemize}
\qed\end{proof}

\section{Decompositions and Joins}\label{sec:decompose-along-join} 
A major problem in constructing the 
$\mathsf{TS}_k$-reconfiguration graph of a given graph $G$ is that
the number of independent sets in $G$ may be exponentially large and hence so may be $\mathsf{TS}_k(G)$. 
However, the structure of $G$
may be such that it can be decomposed into smaller subgraphs for which the
$\mathsf{TS}_k$-reconfiguration graphs
are more easily obtained. This will be useful whenever $\mathsf{TS}_k(G)$ can be built from
these smaller graphs. We present one such decomposition in this section.
\RR{In a subsequent paper \cite{AH2023a} we introduced a different decomposition, called an $H$-decomposition, useful in the context where
the $\mathsf{TS}_k$-reconfiguration graphs are acyclic.}
\begin{definition}
The $H_1,H_2$ join $G$ of vertex disjoint simple graphs $G_1$ and $G_2$, where $H_i \subseteq G_i ,i=1,2$, is formed by adding all edges between $H_1$ and $H_2$.
\end{definition}
\figurename~\ref{fig:my_label}(a) illustrates a join of the graphs $G_1$ and $G_2$. In cases where the graph $G$ can be decomposed along a join we can decompose $\mathsf{TS}_k(G)$ into connected components.
For the decomposition theorem we need
a method of combining stable sets of possibly different sizes from disjoint graphs.
\begin{definition}
Let $G_1$ and $G_2$ be simple connected graphs on different sets of vertices. For
$1 \le s < k$ we define the product $\mathsf{TS}_s(G_1) \otimes  \mathsf{TS}_{k-s}(G_2)$\RR{:} 
The vertices are of the form $S_1 \cup S_2$ where $S_1 \in V(\mathsf{TS}_s(G_1))$
and $S_2 \in V(\mathsf{TS}_{k-s}(G_2))$. The edges in the product are of the form
($S_1 \cup S_2$, $S_1^{'} \cup S_2$) where 
$(S_1,S_1^{'}) \in E(\mathsf{TS}_s(G_1))$ and
($S_1 \cup S_2$, $S_1 \cup S_2^{'}$)
where $(S_2,S_2^{'}) \in E(\mathsf{TS}_{k-s}(G_2))$\RR{.}
\end{definition}
\figurename~\ref{fig:my_label}(b)  shows some examples of this
definition. In the following proposition we show that if graph $G$ is the
join of $G_1$ and $G_2$ then $\mathsf{TS}_{k}(G)$ is the
union of $\mathsf{TS}_{k}(G_1)$, $\mathsf{TS}_{k}(G_2)$ and
products of $\mathsf{TS}_k$-reconfiguration graphs for smaller values of $k$. It
is illustrated in \figurename~\ref{fig:my_label}. 

\begin{figure}[!ht]
\centering
\begin{tikzpicture}[scale=0.64, every node/.style={circle, draw, thick, minimum width=0.3cm, transform shape}]
	\begin{scope}
		\foreach \x/\i/\j in {1/0/0,2/1.5/1,3/1.5/0,4/1.5/-1,5/3/1,6/3/-1,7/4.5/-1,8/4.5/1,9/6/0}
		{
			\node (\x) at (\i,\j) {$\x$};
		}
		\draw[thick] (1) -- (2) -- (5) -- (8) -- (9) -- (7) -- (6) -- (4) -- (1) (1) -- (3) (5) -- (7) (6) -- (8);
		\draw[very thick] (2) -- (5) (2) -- (6) (3) -- (5) (3) -- (6) (4) -- (5) (4) -- (6);
		
		\begin{scope}[on background layer]
			\node[rectangle, draw, label=below:{\large $G_1$}, minimum width=2.5cm, minimum height=4.5cm] at (0.75,0) {};
			\node[rectangle, draw, label=below:{\large $G_2$}, minimum width=4cm, minimum height=4.5cm] at (4.5,0) {};
			\node[rectangle, draw, label=below:{\large $H_1$}, fill=gray!20!white, minimum width=0.7cm, minimum height=3cm] at (1.5,0) {};
			\node[rectangle, draw, label=below:{\large $H_2$}, fill=gray!20!white, minimum width=0.7cm, minimum height=3cm] at (3,0) {};
		\end{scope}
		
		\node[rectangle, draw=none, fill=none] at (3,-3.5) {{\Large (a) $G$ is $H_1, H_2$ join of $G_1, G_2$.}};
	\end{scope}
	\begin{scope}[shift={(10,0)}]
		\node (234) at (0,2) {$234$};
		
		\node (569) at (2,2) {$569$};
		
		\begin{scope}[yshift={-0.5cm}]
			\node (238) at (-2,1) {$238$};
			\node (239) at (-1,0) {$239$};
			\node (237) at (-2,-1) {$237$};
			\draw[thick] (238) -- (239) -- (237);
			
			\node[xshift={1.5cm}] (248) at (-2,1) {$248$};
			\node[xshift={1.5cm}] (249) at (-1,0) {$249$};
			\node[xshift={1.5cm}] (247) at (-2,-1) {$247$};
			\draw[thick] (248) -- (249) -- (247);
			
			\node[xshift={3cm}] (348) at (-2,1) {$348$};
			\node[xshift={3cm}] (349) at (-1,0) {$349$};
			\node[xshift={3cm}] (347) at (-2,-1) {$347$};
			\draw[thick] (348) -- (349) -- (347);
		\end{scope}
		
		\node (178) at (3.5,0.5) {$178$};
		\node (278) at (5,1.5) {$278$};
		\node (378) at (5,0.5) {$378$};
		\node (478) at (5,-0.5) {$478$};
		\draw[thick] (178) -- (278) (178) -- (378) (178) -- (478);
		
		\node (156) at (3.5,2.5) {$156$};
		\node (159) at (3.5,-1.5) {$159$};
		\node (169) at (3.5,-2.5) {$169$};
		
		\begin{scope}[on background layer]
			\node[rectangle, draw, thick, dashed, minimum width=1cm, minimum height=1cm, label={[label distance=-0.5cm]above:{\large $\mathsf{TS}_3(G_1)$}}] at (234) {}; 
			\node[rectangle, draw, thick, dashed, minimum width=1cm, minimum height=1cm, label={[label distance=-0.5cm]above:{\large $\mathsf{TS}_3(G_2)$}}] at (569) {}; 
			\node[rectangle, draw, thick, dashed, minimum width=5cm, minimum height=3cm, label={[label distance=-1.5cm]below:{\large $\mathsf{TS}_2(G_1) \otimes \mathsf{TS}_1(G_2 - H_2)$}}] at ([xshift={-0.5cm}]249) {}; 
			\node[rectangle, draw, thick, dashed, minimum width=1cm, minimum height=6cm, label={[label distance=-1.8cm]above:{\large $\mathsf{TS}_1(G_1 - H_1) \otimes \mathsf{TS}_2(G_2)$}}] at ([yshift={-0.5cm}]178) {}; 
			\node[rectangle, draw, thick, dashed, minimum width=3cm, minimum height=3cm, label={[label distance=-2cm, xshift={2cm}]below:{\large $\mathsf{TS}_1(G_1) \otimes \mathsf{TS}_2(G_2 - H_2)$}}] at ([xshift={-0.7cm}]378) {}; 
		\end{scope}
		
		\node[rectangle, draw=none, fill=none] at (3,-3.5) {{\Large (b) A decomposition of $\mathsf{TS}_k(G)$.}};
	\end{scope}
\end{tikzpicture}
\caption{A graph $G$ and a decomposition of $\mathsf{TS}_k(G)$ using Proposition~\ref{prop:decomp}.}
\label{fig:my_label}
\end{figure}
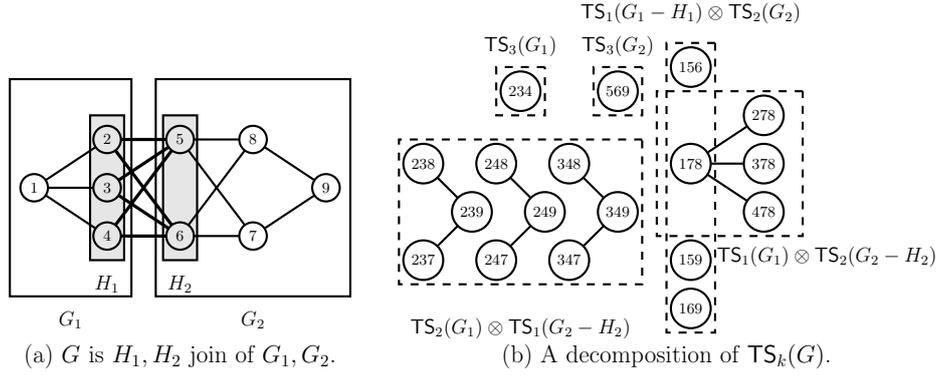

\begin{proposition}
\label{prop:decomp}
Let $G$ be the $H_1,H_2$ join of graphs $G_1$,$G_2$. Choose
an integer $k \ge 2$ so that $G_1$,$G_2$ each contain
a stable set of size $k$.\\
(a) $\mathsf{TS}_k(G)$
be decomposed into $k+1$ (possibly empty) vertex disjoint subgraphs
$\mathsf{TS}_k(G_1)$,  $\mathsf{TS}_k(G_2)$, and for $1 \le s < k$ the union of
\begin{equation}
\label{eq:Ts}
\mathsf{TS}_s(G_1) \otimes  \mathsf{TS}_{k-s}(G_{2} - H_{2}) \text{~~and~~}
\mathsf{TS}_s(G_1 - H_1) \otimes  \mathsf{TS}_{k-s}(G_{2}).
\end{equation}\\
(b) Fix $i=1$ or $2$. Every stable set $S_i$ of size $k$ in $G_i$ satisfies
\begin{equation}
\label{eq:SHcond}
|S_i \cap V(H_i)| \neq 1
\end{equation}
if and only if $\mathsf{TS}_k(G_i)$ is disconnected from the remaining $k$ subgraphs of $\mathsf{TS}_k(G)$. 
\\
\end{proposition}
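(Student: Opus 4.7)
The plan is to handle parts (a) and (b) by tracking a single parameter $s = |S \cap V(G_1)|$ attached to each size-$k$ independent set $S$ of $G$. For part (a), I would first observe that $s$ partitions $V(\mathsf{TS}_k(G))$ into $k+1$ layers ($s = 0, 1, \dots, k$). The extremal layers $s = k$ and $s = 0$ correspond exactly to the subgraphs $\mathsf{TS}_k(G_1)$ and $\mathsf{TS}_k(G_2)$. For an intermediate $s$ with $0 < s < k$, both $S_1 := S \cap V(G_1)$ and $S_2 := S \cap V(G_2)$ are nonempty; since the join adds every edge between $V(H_1)$ and $V(H_2)$, independence of $S$ forces $S_1 \cap V(H_1) = \emptyset$ or $S_2 \cap V(H_2) = \emptyset$. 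This places $S$ in $\mathsf{TS}_s(G_1 - H_1) \otimes \mathsf{TS}_{k-s}(G_2)$ or $\mathsf{TS}_s(G_1) \otimes \mathsf{TS}_{k-s}(G_2 - H_2)$, respectively (the two products may overlap on sets avoiding both $H_1$ and $H_2$, which is why the proposition takes their union). Different values of $s$ yield disjoint vertex sets, establishing the claimed vertex partition into $k+1$ pieces.

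I would then verify that the edges within each listed subgraph are genuinely edges of $\mathsf{TS}_k(G)$. An edge of a product $\mathsf{TS}_s(G_1') \otimes \mathsf{TS}_{k-s}(G_2')$ corresponds to sliding a single token along an edge of $G_1'$ or of $G_2'$; because the ``other side'' avoids $V(H_j)$ by construction, the resulting set has no join edge to worry about and therefore remains independent in $G$. Crucially, a token slide along a join edge changes $s$ by $\pm 1$, so such slides cannot lie inside any single subgraph of the decomposition---they are precisely the edges of $\mathsf{TS}_k(G)$ that travel between the listed pieces rather than within them.

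For part (b), fix $i \in \{1,2\}$ and let $j$ be the other index. I would characterize edges of $\mathsf{TS}_k(G)$ leaving $V(\mathsf{TS}_k(G_i))$: any such edge is a slide from some $v \in S \subseteq V(G_i)$ to some $w \notin V(G_i)$, which forces $vw$ to be a join edge, so $v \in V(H_i)$ and $w \in V(H_j)$. For $S - v + w$ to be independent in $G$, every vertex of $S \cap V(H_i)$ other than $v$ would be adjacent to $w$, hence $S \cap V(H_i) = \{v\}$, i.e., $|S \cap V(H_i)| = 1$. Conversely, whenever some $S \in V(\mathsf{TS}_k(G_i))$ satisfies $|S \cap V(H_i)| = 1$, taking $v$ to be its unique vertex in $V(H_i)$ and any $w \in V(H_j)$ produces a valid slide out of $\mathsf{TS}_k(G_i)$. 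Contrapositive and direct statement together yield the equivalence. The main obstacle is the bookkeeping around the two overlapping products in part (a): one must be careful that their union---rather than a disjoint union---is the correct object, and confirm that every intra-subgraph edge of $\mathsf{TS}_k(G)$ is captured by at least one of the two products, a point that reduces to the observation that all such edges come from slides internal to $G_1$ or to $G_2$.
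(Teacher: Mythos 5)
Your proof is correct and follows essentially the same route as the paper's: classify each size-$k$ stable set $S$ by $s = |S \cap V(G_1)|$, use the join edges to force $S \cap V(H_1) = \emptyset$ or $S \cap V(H_2) = \emptyset$ in the intermediate layers $0 < s < k$ (with the overlap of the two products exactly when $S$ avoids both $H_1$ and $H_2$), and characterize edges leaving $\mathsf{TS}_k(G_i)$ as slides of the unique token of $S$ on $V(H_i)$ to a vertex of $V(H_j)$. If anything, your write-up is slightly more careful than the paper's, since you also verify that the edges of each product are genuine edges of $\mathsf{TS}_k(G)$ and you argue the converse of (b) in the direct orientation, by exhibiting an escaping slide whenever some $S$ has $|S \cap V(H_i)| = 1$ (which tacitly uses that $H_j$ is nonempty, an assumption the proposition itself implicitly requires).
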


\begin{proof}
Let $S$ be an independent set of size $k$ in $G$ and, for $i=1,2$
let $S_i = S \cap V(G_i)$. Define $s=|S_1|$. \\
\begin{itemize}
\item[(a)] We consider the three cases.
\begin{itemize}
	\item[(i)] $s=k$.\\
	In this case $S_1$ is an independent set in
	$G_1$, so $S \in V(\mathsf{TS}_k(G_1))$.
	
	\item[(ii)] $s=0$.\\
	The symmetric case, which yields
	$S \in V(\mathsf{TS}_k(G_2))$.
	
	\item[(iii)] $0 < s < k$. \\
	First suppose that $S \cap V(H_1) \neq \emptyset$
	implying, by construction of the join, that
	$S \cap V(H_2) = \emptyset$. We have that
	$S_1 \in V(T_s(G_1)), S_2 \in V(T_{k-s}(G_2 - H_2))$
	and therefore $(S_1,S_2) \in T_s(G_1) \otimes T_{k-s}(G_2 - H_2) $. Symmetrically, if $S \cap V(H_2) \neq \emptyset$ we conclude that $(S_1,S_2) \in T_s(G_1 - H_1) \otimes T_{k-s}(G_2)$. Note that $S$ will appear in
	both sides of the union in (\ref{eq:Ts}) if and only if $S \cap V(H_i) = \emptyset, i=1,2$.
	
	The vertices of the $k+1$ subgraphs of $\mathsf{TS}_k(G)$ in (a) correspond to stable sets in $G$ with different values of $s$, so they are vertex disjoint.
\end{itemize}

\item[(b)] Suppose $i=1$. A similar argument applies to the case $i=2$.
\begin{itemize}
	\item[($\Rightarrow$)] 
	We show that it is impossible to slide a vertex $v \in S_1$ to a vertex $w \in G_2$ to create a new stable set. Indeed, for such a slide to be possible we must have $v \in H_1$ and so by (\ref{eq:Ts}) $S_1 \cap H_1$ 
	must contain another vertex $u$. Now $w \in H_2$ so it is adjacent to $u$ and hence $v$ cannot be slid to $w$. 
	Hence there are no edges from $\mathsf{TS}_k(G_1)$ to any of the other $k$ subgraphs in $\mathsf{TS}_k(G)$.
	
	\item[($\Leftarrow$)] 
	Suppose, by way of contradiction, that there is an edge between $\mathsf{TS}_k(G_1)$ and one of the other $k$ subgraphs. Then there is a vertex $v \in H_1$ that can be slide to
	a vertex $w \in H_2$. This is only possible if
	$|S_1 \cap V(H_1)| =1$ yielding the required contradiction.
\end{itemize}
\end{itemize}
\qed\end{proof}
Observe that equation (\ref{eq:SHcond}) in (b) is automatically 
satisfied if for either $i=1$ or $2$ we have $|V(G_i)| < |V(H_i)| + k -1$.
We remark that even when part (b) applies there may be additional edges between the $k-1$ graphs described by the
products in part (a).

\section{Concluding Remarks}\label{sec:concluding-remarks}

In this paper, \RR{our primary focus was} the realizability and structural properties of reconfiguration graphs of independent sets under Token Sliding.

\RR{In Section~\ref{sec:is-G-a-TSk-graph}}, we presented  necessary and sufficient conditions for a graph $G$ to be a $\mathsf{TS}_k$-reconfiguration graph ($k \geq 2$), where $G$ belongs to certain restricted graph classes, namely complete graphs, paths, cycles, complete bipartite graphs, connected split graphs, \RA{maximal outerplanar graphs, and complete graphs minus one edge}.
Even for $k = 2$, it remains unknown what the necessary and sufficient conditions for a forest to be a $\mathsf{TS}_k$-reconfiguration graph are.
We remark that ``being a $\mathsf{TS}_k$-reconfiguration graph'' is \textit{not} hereditary, even for trees. For example $K_{1,3}$ is not a $\mathsf{TS}_2$-reconfiguration graph (Proposition~\ref{prop:Kmn-isnt-a-TSk-graph}) but if we replace one
edge by a $P_4$ it is (\figurename~\ref{fig:exa-tree-TS2-graph}).
Proposition~\ref{prop:paths-girth} provided a useful insight: given a forest $F$, if there exists $G$ such that $F \simeq \mathsf{TS}_2(G)$, the graph $G$ must be $P_5$-free (e.g., see the graph $G$ in \figurename~\ref{fig:exa-tree-TS2-graph}). 

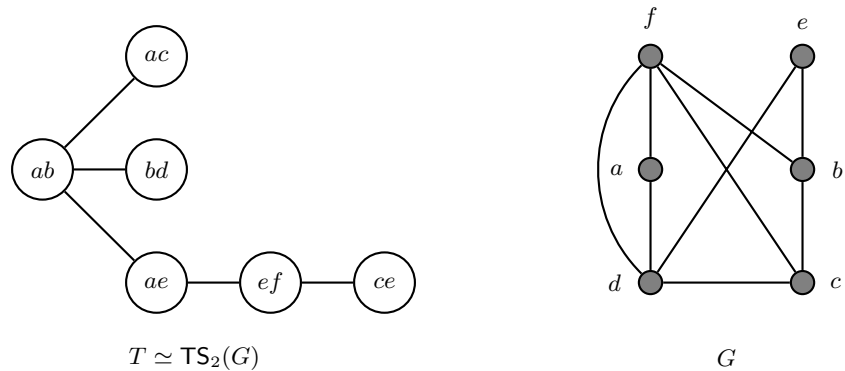
\begin{figure}[!ht]
\centering
\begin{tikzpicture}[every node/.style={circle, draw, thick, minimum size=0.8cm}]
\begin{scope}
\node (v1) at (0,0) {$ab$};
\node (v2) at (1.5,1.5) {$ac$};
\node (v3) at (1.5,0) {$bd$};
\node (v4) at (1.5,-1.5) {$ae$};
\node (v5) at (3,-1.5) {$ef$};
\node (v6) at (4.5,-1.5) {$ce$};

\draw[thick] (v1) -- (v2) (v1) -- (v3) (v1) -- (v4) -- (v5) -- (v6);

\node[draw=none] (T) at (2,-2.5) {$T \simeq \mathsf{TS}_2(G)$};
\end{scope}
\begin{scope}[shift={(8,0)}, every node/.style={circle, draw, thick, fill=gray, minimum size=0.3cm}]
\node[label=left:$a$] (a) at (0,0) {};
\node[label=right:$b$] (b) at (2,0) {};
\node[label=right:$c$] (c) at (2,-1.5) {};
\node[label=left:$d$] (d) at (0,-1.5) {};
\node[label=above:$e$] (e) at (2,1.5) {};
\node[label=above:$f$] (f) at (0,1.5) {};

\draw[thick] (b) -- (c) (a) -- (d) (d) -- (c) (e) -- (b) (e) -- (d) (a) -- (f) (b) -- (f) (c) -- (f) (d) edge[bend left=45] (f);

\node[draw=none, fill=none] (G) at (1,-2.5) {$G$};
\end{scope}
\end{tikzpicture}
\caption{Replacing an edge of $K_{1,3}$ by a $P_4$ results a tree $T$ which is also a $\mathsf{TS}_2$-reconfiguration graph. Each node $ab$ in $T$ represents a size-$2$ stable set of $G$.}
\label{fig:exa-tree-TS2-graph}
\end{figure}

In Section~\ref{sec:properties}, we shifted our focus to some graph properties, namely $s$-partitedness, planarity, Eulerianity, girth, and the clique's size, and provided examples and proofs showing that for some given graph $G$, certain $\mathsf{TS(G)}$ and $\mathsf{TS}_k(G)$ graphs do (not) inherit some properties from $G$, and vice versa. 
As the structural properties of $\mathsf{TS}(G)$ and $\mathsf{TS}_k(G)$ have not yet been systematically investigated, a large number of open questions in this direction can be obtained by strengthening our results or simply specifying either a graph class or a property which has not yet been considered.

In Section~\ref{sec:decompose-along-join} we showed a way of decomposing a graph
that induced a decomposition of its $\mathsf{TS}_k$-reconfiguration graphs.
By inverting the construction we have a way of building larger reconfiguration
graphs from
smaller pieces.

\RA{
In \cite{AH2023a} we continue our study of reconfiguration graphs
of independent sets by focusing on the case where these graphs
are acyclic.}

\section*{Acknowledgements}
We thank Yuni Iwamasa, Jesper Jansson, and Dominik K\"{o}ppl for their useful comments and discussions.
We thank Masahiro Takahashi for his proof of Proposition~\ref{prop:s-partite}(a).
Avis' research is partially supported by the Japan Society for the Promotion of Science (JSPS) KAKENHI Grants JP18H05291, JP20H00579, and JP20H05965 (AFSA) and Hoang's research by JP20H05964 (AFSA).

\bibliographystyle{splncs04}
\bibliography{refs.bib}

\clearpage
\appendix

\section{Connectivity and Diameter of $\mathsf{TS}_k(G)$ for Specific Graph Classes}\label{apd:known-properties}

For certain graph $G$, Table~\ref{table:cd} includes some properties of the connectivity and diameter of $\mathsf{TS}_k(G)$ that can be derived from known results. 

\begin{table}[!ht]
\caption{Connectivity and diameter of $\mathsf{TS}_k(G)$ ($2 \leq k \leq \alpha(G)$). Here $n = \vert V(G) \vert$.}
\label{table:cd}
\centering
\begin{tabular}{|c|c|c||c|c|c|}
\hline
\multicolumn{2}{|c|}{\multirow{2}{*}{$G$ (simple, connected)}} & \multicolumn{2}{|c|}{$\mathsf{TS}_k(G) (1 \leq k \leq \alpha(G))$} & \multirow{2}{*}{Ref.} \\
\cline{3-4}
\multicolumn{2}{|c|}{\quad} & always connected? & diameter & \\
\hline
1 & perfect & no & $O(2^n)$ & \multirow{3}{*}{\cite{KaminskiMM11,KaminskiMM12}}\\
\cline{1-4}
2 & even-hole-free & yes, if $k = \alpha(G)$ & $O(n)$, if $k = \alpha(G)$ & \\
\cline{1-4}
3 & $P_4$-free & no & $O(n^2)$ & \\
\hline
4 & claw-free & yes & $O(\text{poly}(n))$ & \cite{BonsmaKW14}\\
\hline
5 & tree & no & \multirow{2}{*}{$O(n^2)$} & \multirow{2}{*}{\cite{DemaineDFHIOOUY15}}\\
\cline{1-3}
6 & path & yes & & \\
\hline
7 & bipartite permutation & no & $O(n^2)$ & \cite{Fox-EpsteinHOU15}\\
\hline
8 & interval & no & $O(kn^2)$ & \cite{BonamyB17,BrianskiFHM21}\\
\hline
\end{tabular}
\end{table}

\begin{itemize}
\item[1.] In~\cite{KaminskiMM12}, Kami{\'n}ski et al. showed the $\mathtt{PSPACE}$-completeness of \textsc{Independent Set Reconfiguration (ISR)} under any of $\mathsf{TS}, \mathsf{TJ}$, or $\mathsf{TAR}$ when the input graph is a perfect graph.
Combining their reduction from \textsc{Shortest Path Reconfiguration (SPR)} and an example of a reconfiguration graph of \textsc{SPR} having exponentially large diameter in the size of the input graph~\cite{KaminskiMM11} gives us an example of $\mathsf{TS}_k(G)$ with exponentially large diameter in $n = \vert V(G) \vert$.

Observe that one can construct a perfect graph $G$ where $\mathsf{TS}_k(G)$ is not connected.
For instance, take $G$ as the star $K_{1,n}$.
Then, for $n \geq k+1$, $\mathsf{TS}_k(G)$ is not connected. 
This also holds for other graph classes such as $P_4$-free graphs, trees, bipartite permutation graphs, and interval graphs.
\item[2.] They also showed that $\mathsf{TJ}_k(G)$ is connected and its diameter is $O(n)$ when $G$ is a connected even-hole-free graph.
Observe that when $k = \alpha(G)$, we have $\mathsf{TJ}_k(G) \simeq \mathsf{TS}_k(G)$.
\item[3.]
Kami{\'n}ski et al.~\cite{KaminskiMM12} designed a linear-time algorithm that decides whether there is a path between $I, J \in \mathsf{TS}_k(G)$, and if yes, outputs a shortest one, where $G$ is $P_4$-free.
One can verify that their algorithm indeed outputs a path in $\mathsf{TS}_k(G)$ of length $O(n^2)$.
\item[4.] Bonsma et al.~\cite{BonsmaKW14} show that when $G$ is a connected claw-free graph, $\mathsf{TS}_k(G)$ is always connected, and they provided a polynomial-time algorithm for outputting a path between any pair $I, J \in \mathsf{TS}_k(G)$.
\item[5--6.] Demaine et al.~\cite{DemaineDFHIOOUY15} designed a linear-time algorithm for deciding, whether there is a path between $I, J \in \mathsf{TS}_k(G)$, and if yes, output a path of length $O(n^2)$, provided that $G$ is a tree.
They also gave an example of an instance $(G, I, J)$ where $G$ is a path and the length of a shortest path between $I, J \in \mathsf{TS}_k(G)$ is $\Omega(n^2)$. 
\item[7.] Fox-Epstein et al.~\cite{Fox-EpsteinHOU15} designed a cubic-time algorithm for deciding, whether there is a path between $I, J \in \mathsf{TS}_k(G)$, and if yes, output a path of length $O(n^2)$, provided that $G$ is a bipartite permutation graph.
\item[8.] Bonamy and Bousquet~\cite{BonamyB17} designed a polynomial-time algorithm for deciding whether $\mathsf{TS}_k(G)$ is connected when $G$ is an interval graph. 
However, they did not provide any estimation on its diameter.
Motivated by this question, Bria{\'n}ski et al.~\cite{BrianskiFHM21} recently showed that the diameter of $\mathsf{TS}_k(G)$ is $O(kn^2)$.
\end{itemize}

\end{document}